\newcommand{\C}{{\mathbb C}}
\newcommand{\K}{{\mathbb K}}
\newcommand{\N}{{\mathbb N}}
\newcommand{\R}{{\mathbb R}}
\newcommand{\abs}[2][\empty]{\ifx#1\empty\left|#2\right|%
\else#1\vert #2 #1\vert\fi}
\newcommand{\caninf}{\rho}
\newcommand{\Cnt}[1][]{{\cal C}^{#1}}
\newcommand{\clos}[1]{\overline{#1}}
\newcommand{\co}[1]{{#1}^{c}}
\newcommand{\defstyle}[1]{{\bf #1}}
\newcommand{\eps}{\varepsilon}
\newcommand{\Filter}{\mathcal F}
\newcommand{\ideal}{\unlhd}
\newcommand{\idealproper}{\lhd}
\renewcommand{\implies}{\Rightarrow}
\newcommand{\inter}[1]{{#1}^\circ}
\newcommand{\Max}{\mathcal M}
\newcommand{\pure}{m}
\newcommand{\rad}[1]{\sqrt{#1}}
\newcommand{\radpart}[1]{{#1}^{\rad{\phantom{.}}}}
\newcommand{\restr}[2]{{#1}_{|#2}}
\newcommand{\zclos}[1]{{#1}_z}
\newcommand{\zpart}[1]{{#1}^z}
\newcommand{\GenC}{\widetilde\C}
\newcommand{\GenCc}{\GenC_{\mathrm{cnt}}}
\newcommand{\GenK}{\widetilde\K}
\newcommand{\GenKc}{\GenK_{\mathrm{cnt}}}
\newcommand{\GenKs}{\GenK_{\mathrm{sm}}}
\newcommand{\GenR}{\widetilde\R}
\newcommand{\GenRc}{\GenR_{\mathrm{cnt}}}
\newcommand{\Mod}{{\mathcal M}}
\newcommand{\Null}{{\mathcal N}}
\newcommand{\Charsets}{\mathcal S}
\newcommand{\ClosedCharsets}{\mathcal A}
\newtheorem{thm}{Theorem}[section]
\newtheorem{lemma}[thm]{Lemma}
\newtheorem{prop}[thm]{Proposition}
\newtheorem{df}[thm]{Definition}
\newtheorem{cor}[thm]{Corollary}
\theoremstyle{remark}
\newtheorem{rem}[thm]{Remark}
\begin{document}
\title{Asymptotic ideals (ideals in the ring of Colombeau generalized constants with continuous parametrization)}
\author{A.~Khelif\footnote{Universit\'e Paris 7, Equipe de Logique}, D.~Scarpalezos\footnote{Universit\'e Paris 7, Centre de Math\'ematiques}, H.~Vernaeve\footnote{Ghent University, Department of Mathematics}}
\date{}
\maketitle

\begin{abstract}
We study the asymptotics at zero of continuous functions on $(0,1]$ by means of their asymptotic ideals, i.e., ideals in the ring of continuous functions on $(0,1]$ satisfying a polynomial growth condition at $0$ modulo rapidly decreasing functions at $0$. As our main result, we characterize maximal and prime ideals in terms of maximal and prime filters.
\end{abstract}

\section{Introduction}
In this paper, we study the asymptotic ideals of continuous functions $(0,1]\to\K$ (where $\K$ is one of the fields $\R$ or $\C$), i.e., ideals in the ring of continuous functions $\phi$ satisfying the following growth condition (usually called moderateness)
\[(\exists N\in\N) (\exists \eps_0>0) (\forall\eps\le \eps_0)\abs{\phi(\eps)}\le \eps^{-N}\]
modulo the ideal of continuous functions $\phi$ satisfying
\[(\forall n\in\N) (\exists \eps_0>0) (\forall\eps\le \eps_0)\abs{\phi(\eps)}\le \eps^{n}\]
(usually called negligibility). Apart from the obvious interest of such a study to asymptotic analysis, such equivalence classes of functions also naturally arise  in generalized function theory as the ring of generalized constants $\GenKc$ of the algebra of Colombeau generalized functions (see \S \ref{sec-prelim}).

The ring $\GenKc$ of generalized constants with continuous dependence on the parameter has been introduced and studied in \cite{BK}, where it is also shown that this ring is isomorphic to the ring of generalized constants with smooth dependence. In fact, the study of the ring $\GenKc$ amounts to the study of the asymptotics at zero of moderate continuous functions on $(0,1]$.

In generalized function theory, the choice of continuous dependence comes from the observation that when one embeds distributions in an algebra of Colombeau generalized functions and when one solves nonlinear problems, one always encounters generalized functions represented by continuous (even smooth) nets of smooth functions.

The algebraic properties of the ring $\GenKc$ are different from those of the ring $\GenK$ of generalized constants without continuous dependence on the parameter, and many tools used in the study of $\GenK$ cannot be used. Most strikingly, this is manifested by the fact that $\GenKc$ does not have any nontrivial idempotent elements, in sharp contrast with the ring $\GenK$ (which is a so-called exchange ring \cite{HVIdeals}). Thus the main tools used in \cite{AJ} and \cite{HVIdeals} to study $\GenK$ cannot be used.

In this paper, we study prime and maximal ideals by attaching a filter of closed subsets of $(0,1]$ to each ideal. The filter is analogous to the filter $\{S\subseteq (0,1]: e_{\co S}\in I\}$ attached to an ideal $I\idealproper\GenK$ (\cite[\S 6]{HVIdeals}), and thus allows us to overcome the difficulty of the lack of idempotents. In this way, we obtain a classification of maximal and minimal prime ideals in terms of maximal and prime filters.

The methods used in this paper are inspired by the study of the ideals in $\GenK$ \cite{AJ,HVIdeals} and by the study of maximal ideals of rings of continuous functions by Gillman and Jerison \cite{GJ}. Compared to \cite{GJ}, the main novelty is the adaptation to the asymptotic nature of the ring $\GenKc$.

\section{Preliminaries}\label{sec-prelim}
The ring $\GenK$, with $\K=\R$ or $\K=\C$ (the field of real, resp.\ complex numbers), is defined as $\Mod_\K/\Null_\K$, where
\begin{align*}
\Mod_\K&=\{(x_\eps)_\eps\in \K^{(0,1]}: (\exists N\in\N) (\exists \eps_0>0) (\forall\eps\le \eps_0)\abs{x_\eps}\le \eps^{-N}\}\\
\Null_\K&=\{(x_\eps)_\eps\in \K^{(0,1]}: (\forall n\in\N) (\exists \eps_0>0) (\forall\eps\le\eps_0) \abs{x_\eps}\le\eps^n\}.
\end{align*}
We denote by $[x_\eps]\in\GenK$ the element with representative $(x_\eps)_\eps$ and we denote $\caninf:= [\eps]$.

$\GenK$ is a complete topological ring with the so-called sharp topology, which can be defined as follows. Let $x=[x_\eps]\in\GenK$. Let
\[v(x):= \sup\{a\in\R: (\exists \eps_0>0)(\forall \eps\le\eps_0) \abs{x_\eps}\le \eps^a\}.\]
Then the ultrametric $d(x,y):= e^{-v(x-y)}$ induces a topology on $\GenK$ which is called the sharp topology \cite{Scarpa93}.

Denoting by $\Cnt((0,1])$ (resp.\ $\Cnt[\infty]((0,1])$ the set of continuous (resp.\ smooth) maps in $\K^{(0,1]}$, the ring $\GenKc := (\Mod_\K\cap \Cnt((0,1]))/(\Null_K\cap \Cnt((0,1]))$ and $\GenKs := (\Mod_\K\cap \Cnt[\infty]((0,1]))/(\Null_K\cap \Cnt[\infty]((0,1]))$. Clearly, $\GenKs\subseteq\GenKc\subseteq\GenK$. In \cite{BK}, it is shown that $\GenKc=\GenKs$.

We denote $I\idealproper\GenKc$ for a proper ideal $I$ of $\GenKc$ (i.e., $I\ne \GenKc$).

$\GenK$ is an exchange ring \cite{HVIdeals}, i.e., for each $a\in\GenK$, there exists an idempotent $e\in\GenK$ such that $a+e$ is invertible. Unlike $\GenK$, $\GenKc$ is not an exchange ring \cite[Lemma 4.3]{BK}.

Like $\GenK$, $\GenKc$ is a Gelfand ring \cite[Lemma 4.5]{BK}, i.e., every prime ideal is contained in a unique maximal ideal.

Like $\GenK$, $\GenKc$ is a Bezout ring \cite[Prop.\ 4.26]{BK}, i.e., every finitely generated ideal is principal.

Like $\GenR$, $\GenRc$ is an $l$-ring (or lattice-ordered ring) \cite[Prop.\ 4.13]{BK}.

Let $I\ideal\GenKc$ and $x\in I$. Then $\abs x\in I$ \cite[Lemma 4.24]{BK}.\\
Let $I\ideal\GenRc$. Then $I$ is an l-ideal (or absolutely (order) convex), i.e., if $x\in I$, $x'\in \GenRc$ and $\abs{x'}\le \abs{x}$, then $x'\in I$. \cite[Prop.\ 4.25]{BK}.

Let us point out explicitly the corollary that then also for $I\ideal\GenCc$, $z\in I$, $z'\in\GenCc$, $\abs{z'}\le \abs{z}$ implies that $z'\in I$. Indeed, $z\in I$ implies $\abs{z}\in I\cap\GenRc$ \cite[Lemma 4.24]{BK}. As $I\cap\GenRc\ideal\GenRc$, $I\cap\GenRc$ is an l-ideal in $\GenRc$. Hence $\abs{\Re z'}\le \abs{z}$ implies that $\Re z'\in I\cap\GenRc$. Similarly, $\Im z'\in I\cap\GenRc$. Thus $z' = \Re z' + i \Im z'\in I$.

Hence the bijective correspondence of ideals in $\GenKc$ takes the same form as for ideals in $\GenK$ (\cite{HVIdeals}): the map $I\ideal\GenCc\mapsto I\cap\GenRc = \{\Re z: z\in I\}\ideal\GenRc$ has as an inverse the map $J\ideal\GenRc \mapsto \langle J\rangle = \{z\in \GenCc: \abs z \in J\}\ideal\GenCc$ (where $\langle J\rangle$ is also the ideal generated by $J$ in $\GenCc$). It is an inclusion-preserving bijection between the lattice of ideals of $\GenCc$ and the lattice of ideals of $\GenRc$. In particular, arbitrary sums and intersections are preserved. One easily checks that the isomorphism also preserves products of ideals, principal, pseudoprime and irreducible ideals.
\smallskip

Let $R$ be a commutative ring with 1. An ideal $I\ideal R$ is pure if \cite[Prop.~7.2]{Borceux83}
\[(\forall x\in I)(\exists y\in I)(x=xy).\]
We denote by $\pure(I)$ the pure part of $I\ideal R$, i.e., the largest pure ideal contained in $I$ \cite[Prop.~7.8]{Borceux83}. By definition, $I$ is pure iff $I=\pure(I)$. If $R$ is a Gelfand ring, then \cite[\S 8.2--3]{Borceux83}
\[\pure(I)=\{x\in R: (\exists y\in I)(x=xy)\}.\]

An ideal $I\ideal R$ is idempotent if $I^2=I$.

We denote the radical of $I\ideal R$ by $\sqrt I=\{x\in R: (\exists n\in\N) x^n\in I\} = \bigcap_{I\subseteq P\atop P \text{ prime }} P$ (e.g., see \cite[0.18]{GJ}).

$I\ideal R$ is radical (or semiprime) if $I=\rad I$, or equivalently, if $(\forall x\in R)(x^2\in I\implies x\in I)$.\\
$I\ideal R$ is pseudoprime if for each $a,b\in R$, $ab=0$ implies $a\in I$ or $b\in I$.

$I\ideal R$ is irreducible (or meet-irreducible) if for each $J, K\ideal R$, $I=J\cap K$ implies $I=J$ or $I=K$ \cite[\S 6]{Matsumura}.

\section{Characteristic sets}
\begin{df}
A set $S\subseteq (0,1]$ such that $0\in \clos S$ (closure in $\R$) is called a \defstyle{characteristic set} \cite{BK}. We denote the set of all characteristic sets by $\Charsets$.

Let $S,T\in\Charsets$. We say that $T$ is an \defstyle{extension} of $S$ if $\clos S\subseteq \inter T$ (closure and interior in $(0,1]$) and denote this by $S\prec T$ (or equivalently, $T\succ S$). It is straightforward to check that $\prec$ is antireflexive and transitive on $\Charsets\setminus\{(0,1]\}$, and hence defines a partial order on $\Charsets\setminus\{(0,1]\}$. Notice that $(0,1]\prec (0,1]$, which will turn out to be convenient.
\end{df}

\begin{lemma}\label{ext-eltair}\leavevmode
Let $S,T\in\Charsets$.
\begin{enumerate}
\item If $S\prec T$, there exists $U\in\Charsets$ such that $S\prec U\prec T$.\\
In particular, $\prec$ is a dense order on $\Charsets\setminus\{(0,1]\}$.
\item $S\prec T$ iff $\co T\prec \co S$.
\end{enumerate}
\end{lemma}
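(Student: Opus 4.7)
My approach for part 1 is to appeal to the normality of $(0,1]$ (which is metrizable, hence normal). The hypothesis $S\prec T$ says precisely that $\clos{S}$ and $(0,1]\setminus\inter{T}$ are disjoint closed subsets of $(0,1]$, so normality yields disjoint open sets $V_1,V_2\subseteq (0,1]$ with $\clos{S}\subseteq V_1$ and $(0,1]\setminus\inter{T}\subseteq V_2$. I take $U:=\clos{V_1}$ (closure in $(0,1]$). Since $V_1$ is open and contained in $U$, one has $V_1\subseteq\inter{U}$, so $\clos{S}\subseteq\inter{U}$, i.e.\ $S\prec U$. On the other hand $V_2$ being open and disjoint from $V_1$ forces $\clos{V_1}\subseteq (0,1]\setminus V_2\subseteq\inter{T}$, i.e.\ $\clos{U}\subseteq\inter{T}$, or $U\prec T$. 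That $U\in\Charsets$ is free: $S\subseteq V_1\subseteq U$ together with $0\in\clos{S}^{\R}$ (closure in $\R$) gives $0\in\clos{U}^{\R}$.

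For the density consequence I must produce $U\in\Charsets\setminus\{(0,1]\}$ when $S,T\in\Charsets\setminus\{(0,1]\}$. Here the point is that $T\ne(0,1]$ forces $\inter{T}\ne(0,1]$ (otherwise $T\supseteq\inter{T}=(0,1]$), and therefore $\clos{U}\subseteq\inter{T}\ne(0,1]$ forces $U\ne(0,1]$. Combined with the previous step this gives the required dense-order statement.

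Part 2 is pure topological duality. In any topological space the complements (here taken in $(0,1]$) satisfy $\inter{\co{A}}=\co{\clos{A}}$ and $\clos{\co{A}}=\co{\inter{A}}$, so
\[\co{T}\prec\co{S}\iff \clos{\co{T}}\subseteq\inter{\co{S}}\iff\co{\inter{T}}\subseteq\co{\clos{S}}\iff\clos{S}\subseteq\inter{T}\iff S\prec T.\]
The only genuine work lies in the appeal to normality in part 1; once the separating pair $V_1,V_2$ is in hand, each remaining check (including that $U$ is characteristic) is immediate. I do not foresee any serious obstacle.
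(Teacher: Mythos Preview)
Your proof is correct and takes essentially the same approach as the paper: for part~1 the paper invokes Urysohn's lemma (setting $U=\{\eps:\phi(\eps)\le 1/2\}$ for a Urysohn function $\phi$ separating $S$ from $\co T$), whereas you use the equivalent open-set formulation of normality and take $U=\clos{V_1}$; part~2 is identical to the paper's argument.
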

\begin{proof}
1. Let $S\prec T$. By Urysohn's lemma, there exists $\phi\in\Cnt((0,1])$ such that $0\le\phi\le 1$, $\restr{\phi}{S}=0$ and $\restr{\phi}{\co T}= 1$. Let $U:=\{\eps\in(0,1]: \phi(\eps)\le 1/2\}$. Then $S\prec U\prec T$.

2. $\clos S\subseteq \inter T \iff \clos{(\co T)}=\co{(\inter T)}\subseteq \co{(\clos S)} =\inter{(\co S)}$.
\end{proof}

\begin{df}(cf.\ \cite[4.16]{BK})
Let $x\in\GenKc$ and $S\in\Charsets$. Then $\restr{x}{S}=0$ if
\[(\forall n\in\N) (\exists \delta>0) (\forall \eps\in S\cap (0,\delta)) (\abs{x_\eps}\le\eps^n).\]
where $(x_\eps)_\eps$ is any representative of $x$. We similarly write $\restr{x}{S}=\restr{y}{S}$ for $\restr{(x-y)}S=0$, $\restr x S =1$ for $\restr{(x-1)}S=0$, \dots\\
We say that $\restr{x}{S}$ is invertible if there exists $y\in\GenKc$ such that $\restr{(xy)}S=1$.
\end{df}

\begin{lemma}\label{inv-char}
Let $S\in\Charsets$.
\begin{enumerate}
\item Let $x\in\GenKc$. Then the following are equivalent:
\begin{enumerate}
\item $\restr{x}S$ is invertible (in $\GenKc$)
\item $\restr{x}S$ is invertible in $\GenK$
\item $\restr{x}S$ is bounded away from zero, i.e., for some representative $(x_\eps)_\eps$ of $x$,
\[(\exists n\in\N) (\exists \delta >0) (\forall \eps\in S\cap(0,\delta)) (\abs{x_\eps}\ge\eps^n).\]
(the statement then automatically holds for any representative $(x_\eps)_\eps$ of $x$).
\item for each characteristic set $T\subseteq S$, $\restr x T \ne 0$.
\end{enumerate}
\item $\{x\in\GenKc: \restr x S$ is invertible$\}$ is open.
\item $\restr x S=0$ iff for each characteristic set $T\subseteq S$, $\restr x T$ is not invertible.
\end{enumerate}
\end{lemma}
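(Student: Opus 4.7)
\textbf{Proof plan for Lemma \ref{inv-char}.}
The plan is to establish part 1 by the cycle (a)$\Rightarrow$(b)$\Rightarrow$(c)$\Rightarrow$(a) together with (a)$\Leftrightarrow$(d); parts 2 and 3 will then follow quickly from characterisation (c). The implication (a)$\Rightarrow$(b) is immediate, since $\GenKc\subseteq\GenK$. For (b)$\Rightarrow$(c), an inverse $y\in\GenK$ with $\restr{(xy)}S=1$ forces $\abs{x_\eps y_\eps}\ge 1/2$ on $S\cap(0,\delta)$ for $\delta$ small enough; together with the moderateness bound $\abs{y_\eps}\le\eps^{-N}$ this gives $\abs{x_\eps}\ge \eps^{N+1}$ on $S$ eventually. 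Independence of the representative in (c) drops out of the same triangle inequality applied to the negligible difference of two representatives.

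The main step is (c)$\Rightarrow$(a), where one has to manufacture a continuous moderate quasi-inverse globally on $(0,1]$ rather than just on $S$. I would simply write down the explicit formula
\[y_\eps := \frac{\overline{x_\eps}}{\max(\abs{x_\eps}^2,\eps^{2n})},\]
whose denominator is continuous and bounded below by $\eps^{2n}>0$, so $y\in\Cnt((0,1])$. A case split on whether $\abs{x_\eps}^2\ge\eps^{2n}$ yields $\abs{y_\eps}\le\eps^{-n}$ in both branches, so $y$ is moderate; and on the set $\{\eps:\abs{x_\eps}\ge\eps^n\}$, which contains $S\cap(0,\delta)$ by hypothesis, we have $x_\eps y_\eps=1$ identically, hence $\restr{(xy)}S=1$. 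The $\max$ in the denominator is exactly what circumvents the delicate job of gluing a local inverse to a global continuous function via Urysohn-type partitions; this is the step I expect to be the main obstacle, and the rest of the argument leans on it.

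For the equivalence with (d): given a witness $y$ for (a), any characteristic $T\subseteq S$ with $\restr x T=0$ would yield $\restr{(xy)}T=0$ by moderateness of $y$, contradicting $\restr 1 T\ne 0$ (which holds on every characteristic set). The converse (d)$\Rightarrow$(c) I handle by contrapositive: failure of the lower bound in (c) lets me select $\eps_k\in S\cap(0,1/k)$ with $\abs{x_{\eps_k}}<\eps_k^k$; after passing to a strictly decreasing subsequence, $T:=\{\eps_k:k\in\N\}$ is a characteristic subset of $S$ on which $\restr x T=0$.

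Part 2 is then a direct consequence of (c) and the definition of the sharp metric: if $\abs{x_\eps}\ge\eps^n$ on $S\cap(0,\delta)$, the open sharp ball $\{x':v(x-x')>n\}$ around $x$ satisfies $\abs{x'_\eps}\ge\eps^n-\eps^{n+1}\ge\eps^{n+1}$ on $S$ eventually, so every $x'$ in that ball also has $\restr{x'}S$ invertible. Part 3 follows from part 1: the forward direction is obvious, and for the converse, if $\restr x S\ne 0$ then some $n$ and some sequence $\eps_k\in S$ with $\eps_k\to 0$ and $\abs{x_{\eps_k}}>\eps_k^n$ must exist; the set $T:=\{\eps_k:k\in\N\}$ is then a characteristic subset of $S$ on which the lower bound of (c) holds, so $\restr x T$ is invertible.
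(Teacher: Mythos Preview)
Your proposal is correct and largely self-contained, whereas the paper outsources the equivalences $(b)\Leftrightarrow(c)\Leftrightarrow(d)$ and part~3 to \cite[Lemma~4.1]{HVIdeals} (valid in $\GenK$, hence inherited by $\GenKc\subseteq\GenK$). The one substantive divergence is in $(c)\Rightarrow(a)$: the paper constructs the continuous inverse via Urysohn's lemma---taking $T=\{\eps:\abs{x_\eps}>\eps^n/2\}$, a cutoff $\phi$ equal to $1$ on $S\cap(0,\delta)$ and $0$ off $T$, and setting $y_\eps=\phi(\eps)/x_\eps$ on $T$, $y_\eps=0$ off $T$---while your explicit formula $y_\eps=\overline{x_\eps}/\max(\abs{x_\eps}^2,\eps^{2n})$ achieves the same end without any gluing. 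Your route is more elementary and arguably cleaner for this particular step; the paper's Urysohn construction is more in keeping with the techniques used throughout (e.g.\ Proposition~\ref{zero-inv-ext}), so there is a stylistic trade-off. One small imprecision to fix in part~2: the condition $v(x-x')>n$ only gives $\abs{x_\eps-x'_\eps}\le\eps^a$ eventually for \emph{some} $a>n$, not necessarily $a\ge n+1$, so your estimate $\abs{x'_\eps}\ge\eps^n-\eps^{n+1}$ is not quite justified. Either shrink the ball to $v(x-x')>n+1$, or argue that $\eps^n-\eps^a\ge\eps^n/2$ for $\eps$ small; the paper takes the latter route, working with the ball $\abs{x-y}\le\caninf^n/2$.
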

\begin{proof}
1. $(b)\Leftrightarrow (c)\Leftrightarrow (d)$: by \cite[Lemma 4.1]{HVIdeals}.

$(a)\Rightarrow(b)$: trivial.\\
$(c)\Rightarrow(a)$: let $T:= \{\eps\in (0,1]: \abs{x_\eps}>\eps^n/2\}$. As $(x_\eps)_\eps$ is continuous, $S\cap (0,\delta)\prec T$. By Urysohn's lemma, there exists $\phi\in\Cnt((0,1])$ such that $0\le\phi\le 1$, $\restr\phi {S\cap(0,\delta)}=1$ and $\restr\phi {\co T} = 0$. Let $y_\eps:= \phi(\eps)/x_\eps$, if $\eps\in T$ and $y_\eps:=0$, if $\eps\in\co T$. Then $\abs{y_\eps}\le 2\eps^{-n}$, $(y_\eps)_\eps\in$ is continuous and $x_\eps y_\eps = 1$ for each $\eps\in S\cap (0,\delta)$. Hence $(y_\eps)_\eps$ is a representative of some $y\in\GenKc$ with $\restr {(xy)} S = 1$.

2. Let $\restr x S$ be invertible. Let $n\in\N$ as in part 1(c). Then $\restr y S$ is invertible for each $y\in\GenKc$ with $\abs{x-y}\le \caninf^n/2$ (again by part 1(c)).

3. By \cite[Lemma 4.1]{HVIdeals}, since $\GenKc\subseteq \GenK$.
\end{proof}

\begin{prop}\label{zero-inv-ext}
Let $x\in\GenKc$ and $S\in \Charsets$.
\begin{enumerate}
\item If $\restr{x}S=0$, then $\restr x T=0$ for some $T\succ S$. 
\item If $\restr x S$ is invertible, then $\restr x T$ is invertible  for some $T\succ S$.
\end{enumerate}
\end{prop}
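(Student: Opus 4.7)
The overall strategy for both parts is to use continuity of the representative $(x_\eps)_\eps$ to enlarge $S$ slightly to a $T$ with $\clos S\subseteq\inter T$. A preliminary observation I will exploit: $\restr{x}{S}=0$ automatically upgrades to $\restr{x}{\clos S}=0$, because any $\eps_0\in\clos S\cap(0,\delta_n)$ is the limit of a sequence from $S$ staying inside $(0,\delta_n)$, and continuity propagates the bound $|x_\eps|\le\eps^n$ to $\eps_0$.

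For part 1 I would produce a continuous function $f:(0,1]\to(0,\infty)$ satisfying $f>|x|$ on $\clos S$ and $f$ negligible, then take $T:=\{\eps\in(0,1]:|x_\eps|\le f(\eps)\}$. Concretely, pick a strictly decreasing sequence $\delta_n\downarrow 0$ with $|x_\eps|\le\eps^n$ on $\clos S\cap(0,\delta_n)$, define a piecewise-linear scaffolding $g:(0,\delta_1]\to\R$ by $g(\delta_n):=n-1$, and set $f(\eps):=2\eps^{g(\eps)}$, extended continuously on $[\delta_1,1]$ with values large enough to dominate $|x|$ on $\clos S\cap[\delta_1,1]$. On $\clos S\cap[\delta_{n+1},\delta_n]$, the bound $g\le n$ forces $f\ge 2\eps^n>|x_\eps|$, while $g(\eps)\to\infty$ as $\eps\to 0$ forces $f$ to beat every polynomial. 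Then $\{|x|<f\}$ is open (as $|x|-f$ is continuous), contains $\clos S$, and sits inside $T$, so $\clos S\subseteq\inter T$; negligibility of $f$ immediately gives $\restr{x}{T}=0$.

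For part 2, Lemma \ref{inv-char}(c) supplies $n\in\N$ and $\delta>0$ with $|x_\eps|\ge\eps^n$ on $S\cap(0,\delta)$, extended by continuity to $\clos S\cap(0,\delta)$. The set $O:=\{\eps\in(0,1]:|x_\eps|>\eps^n/2\}$ is open and contains $\clos S\cap(0,\delta)$, using the strict inequality $|x_{\eps_0}|\ge\eps_0^n>\eps_0^n/2$. I would then take $T:=O\cup(\delta/2,1]$, which is open, is a characteristic set (it contains $S$), and satisfies $\clos S\subseteq T=\inter T$: points of $\clos S$ below $\delta$ lie in $O$, while points in $[\delta,1]$ lie in $(\delta/2,1]$. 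For $\eps\in T\cap(0,\min(\delta/2,1/2))$ one has $\eps\in O$, hence $|x_\eps|>\eps^n/2\ge\eps^{n+1}$, so Lemma \ref{inv-char}(c) applies again and gives invertibility of $\restr{x}{T}$.

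The only genuine bookkeeping is the construction of $f$ in part 1, which must be simultaneously continuous, strictly above $|x|$ on $\clos S$, and decay faster than every polynomial; the piecewise-linear scaffolding $g$ is what balances these three demands. Continuity of $(x_\eps)_\eps$ is the essential ingredient making both arguments run---it is precisely what lets the defining inequalities $|x|<f$ and $|x|>\eps^n/2$ cut out \emph{open} enlargements of $\clos S$, and it is the feature absent from $\GenK$ that forces the idempotent-based methods of \cite{HVIdeals} to be replaced by this more hands-on construction.
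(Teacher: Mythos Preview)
Your argument is correct and follows essentially the same route as the paper. For part~1 the paper builds $T$ directly as $\bigcup_{n}(\delta_{n+2},\delta_n)\cap\{\eps:\abs{x_\eps}\le 2\eps^n\}$, whereas you package the varying thresholds $2\eps^n$ into a single continuous negligible $f$ and take $T=\{\abs{x}\le f\}$; part~2 is virtually identical to the paper's proof (same $O=\{\abs{x_\eps}>\eps^n/2\}$ and same patch $(\delta/2,1]$).
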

\begin{proof}
1. Let $(x_\eps)_{\eps\in(0,1]}$ be a (continuous) representative of $x$. Then for each $n\in\N$, there exist $\delta_n>0$ (w.l.o.g.\ strictly decreasing and tending to $0$) such that $\abs{x_\eps}\le \eps^n$ for each $\eps\in S$, $\eps\le \delta_n$. Then let 
$T := \bigcup_{n\in\N}(\delta_{n+2},\delta_n)\cap\{\eps\in(0,1]: \abs{x_\eps}\le 2\eps^n\}$. Then also $\restr x T=0$. We show that $S\prec T$. Let $\eps\in \clos S$. Then $\eps\in (\delta_{n+2},\delta_n)$ for some $n$. By continuity, also $\abs{x_\eps}\le\eps^n$ for each $\eps\in \clos S$, $\eps<\delta_n$. Hence $\eps$ belongs to the open set $(\delta_{n+2},\delta_n)\cap \{\eps\in(0,1]: \abs{x_\eps}<2 \eps^n\}\subseteq T$. Thus $\eps\in \inter T$.

2. Let $n\in\N$ and $\delta>0$ as in lemma \ref{inv-char}.1(c). Let $T:= \{\eps\in (0,1]: \abs{x_\eps}>\eps^n/2\}$ $\cup$ $(\delta/2,1)$. As $(x_\eps)_\eps$ is continuous, $S\prec T$. By lemma \ref{inv-char}.1(c), $\restr x T$ is invertible.
\end{proof}

\begin{lemma}\label{zero-product}
Let $a,b\in\GenKc$ and $S\in \Charsets$. If $\restr{(ab)}S=0$, then there exist closed $T,U$ with $S\subseteq \inter T\cup\inter U$ such that $\restr a T = 0$ and $\restr b U = 0$.
\end{lemma}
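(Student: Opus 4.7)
The plan is in two stages. \emph{First}, enlarge $S$ to a closed characteristic set $W$ on which $ab$ is itself negligible, using Proposition~\ref{zero-inv-ext}. \emph{Second}, split $W$ as $T\cup U$ via the comparison of $\abs{a}$ with $\abs{b}$, softened by a strictly positive negligible buffer that forces the decisive inequality to be strict at each point of $S$, so that $S$ lands in $\inter T\cup\inter U$.

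For the first step, by Proposition~\ref{zero-inv-ext}.1 choose $V\succ S$ with $\restr{(ab)}{V}=0$ and let $W:=\clos V$ (closure in $(0,1]$). Then $W$ is closed in $(0,1]$, $\clos S\subseteq\inter V\subseteq\inter W$, and $\restr{(ab)}{W}=0$ because the non-strict inequalities defining negligibility pass to the closure by continuity of $ab$. Pick any continuous strictly positive $r:(0,1]\to(0,\infty)$ that is negligible at $0$, e.g.\ $r(\eps):=e^{-1/\eps}$, and set
\[ T:=\{\eps\in W:\abs{a_\eps}\le\abs{b_\eps}+r(\eps)\},\qquad U:=\{\eps\in W:\abs{b_\eps}\le\abs{a_\eps}+r(\eps)\}. \]
Both are closed in $(0,1]$, and $T\cup U=W$, since $\abs{a_\eps}>\abs{b_\eps}+r(\eps)$ forces $\abs{b_\eps}<\abs{a_\eps}+r(\eps)$.

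For $\restr{a}{T}=0$: multiplying the defining inequality of $T$ by $\abs{a_\eps}$ gives $\abs{a_\eps}^2\le\abs{a_\eps b_\eps}+\abs{a_\eps}\,r(\eps)$. Given $k\in\N$, take $N$ from the moderateness of $a$; for $\eps$ small enough (and in $W$), $\abs{a_\eps b_\eps}\le\eps^{2k}$ by $\restr{(ab)}{W}=0$, and $\abs{a_\eps}\,r(\eps)\le\eps^{-N}\cdot\eps^{2k+N}=\eps^{2k}$ by moderateness of $a$ and negligibility of $r$, so $\abs{a_\eps}\le\sqrt{2}\,\eps^k$; the case $\restr{b}{U}=0$ is symmetric. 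For $S\subseteq\inter T\cup\inter U$: let $\eps^{*}\in S$, so $\eps^{*}\in\inter W$. Since $r(\eps^{*})>0$, the inequalities $\abs{a_{\eps^{*}}}\ge\abs{b_{\eps^{*}}}+r(\eps^{*})$ and $\abs{b_{\eps^{*}}}\ge\abs{a_{\eps^{*}}}+r(\eps^{*})$ cannot both hold (summing yields $0\ge 2r(\eps^{*})$), so one of the defining inequalities for $T$ or $U$ holds \emph{strictly} at $\eps^{*}$. By continuity, this strict inequality persists on a neighbourhood of $\eps^{*}$ inside $\inter W$, placing $\eps^{*}$ in $\inter T$ or $\inter U$.

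The delicate point I expect is precisely this interior-cover condition. The naive split $\{\abs{a}\le\abs{b}\}\cap W$, $\{\abs{b}\le\abs{a}\}\cap W$ also covers $W$ and yields the correct negligibility on each side, but a point $\eps^{*}$ where $\abs{a_{\eps^{*}}}=\abs{b_{\eps^{*}}}$ (typically a common zero of $a$ and $b$ in $S$) may sit on the boundary of both pieces and in the interior of neither. The strictly positive negligible buffer $r$ repairs this while being small enough that moderateness absorbs the extra term $\abs{a_\eps}\,r(\eps)$ without spoiling negligibility on $T$ and $U$.
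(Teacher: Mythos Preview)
Your argument is correct and takes a genuinely different route from the paper's proof. The paper first invokes the discontinuous theory: since $a,b\in\GenK$, \cite[Lemma~4.1]{HVIdeals} yields a (not necessarily closed) $V\subseteq S$ with $\restr aV=0$ and $\restr b{S\setminus V}=0$; it then applies Proposition~\ref{zero-inv-ext} separately to $V$ and $S\setminus V$ to get closed extensions $T\succ V$, $U\succ S\setminus V$, which automatically gives $S\subseteq\inter T\cup\inter U$. Your approach instead extends once (to $W\succ S$ with $\restr{(ab)}W=0$) and then produces $T,U$ in one stroke by the softened comparison $\abs{a}\lessgtr\abs{b}\pm r$. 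What the paper's route buys is brevity and a clean reduction to known results in $\GenK$; what your route buys is a fully self-contained and explicit construction that avoids the external reference, and your discussion of why the naive split $\{\abs a\le\abs b\}$ fails at common zeros is exactly the point that makes the buffer $r$ necessary. Both arguments hinge on the same underlying idea---compare $\abs a$ with $\abs b$ to decide which factor is responsible for smallness---but yours packages the ``extend then split'' logic more directly, while the paper's ``split then extend'' defers the delicate interior-cover issue to Proposition~\ref{zero-inv-ext}.
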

\begin{proof}
As $a,b\in\GenK$, there exists $V\subseteq S$ such that $\restr a V= 0$ and $\restr b {S\setminus V} = 0$ \cite{HVIdeals}. As $a,b\in\GenKc$, there exist (w.l.o.g.\ closed) $T$, $U$ with $V\prec T$, $S\setminus V\prec U$ such that $\restr a T = 0$ and $\restr b U = 0$ by Prop.\ \ref{zero-inv-ext}.
\end{proof}

\section{Asymptotic filters}
In \cite{GJ}, to any ideal $I\idealproper\Cnt(X)$ (with $X$ a topological space), a filter is associated consisting of the zero-sets of all $f\in I$ and conversely, to a filter $\Filter$ of zero-sets, an ideal $I$ is associated. Taking into account that there is no largest zero-set for $x\in \GenKc$, we proceed as follows:
\begin{df}
A filter of closed subsets of $(0,1]$ is a family $\Filter$ of (relatively) closed subsets of $(0,1]$ such that
\begin{enumerate}
\item $\emptyset\notin\Filter$
\item if $S,T\in\Filter$, then $S\cap T\in\Filter$
\item if $S\in\Filter$, $T\subseteq(0,1]$ is closed and $S\subseteq T$, then $T\in\Filter$.
\end{enumerate}

A closed characteristic subset of $(0,1]$ is called an \defstyle{asymptotic subset}. We denote the set of all asymptotic subsets by $\ClosedCharsets$.

An \defstyle{asymptotic filter} or \defstyle{a-filter} is a filter of closed subsets of $(0,1]$ that contains $(0,\delta]$ for each $\delta>0$. Notice that this implies that $\Filter\subseteq \ClosedCharsets$.

We define as follows a topology on $\ClosedCharsets$. Denoting open intervals corresponding to $\prec$ by
\[(S,T)_\prec:= \{U\in \ClosedCharsets: S\prec U \prec T\},\]
the \defstyle{extension topology} is the topology on $\ClosedCharsets$ with base $\{(S,T)_\prec: S, T\subseteq (0,1]\}$.
We will call $\prec$-open, $\prec$-closed, \dots sets that are open, closed, \dots for this topology. Notice that $\{(0,1]\}$ is $\prec$-open, which will turn out to be convenient.
\end{df}

\begin{rem}
A filter is called free (or non-principal) if $\bigcap_{S\in\Filter} S=\emptyset$. We can alternatively define an a-filter as a free filter of closed subsets of $(0,1]$. For, if $\Filter$ is a filter of closed subsets of $(0,1]$ and $(0,\delta]\notin\Filter$ for some $\delta>0$, then $S\cap [\delta,1]\ne\emptyset$ for each $S\in\Filter$. By compactness of $[\delta,1]$, it would then follow that $\bigcap_{S\in\Filter} S\cap [\delta,1]\ne\emptyset$.
\end{rem}

\begin{df}
Let $I\idealproper\GenKc$. Then
\[\Filter(I):= \{S\subseteq (0,1] \text{ closed}: (\exists x\in I) (\restr x {\co S} \text{ is invertible})\}\]
(here it is understood that $\restr x S$ is trivially invertible if $0\notin \clos S$).

Let $\Filter$ be an a-filter on $(0,1]$. Then
\[I(\Filter) := \{x\in\GenKc: (\exists S\in\Filter) (\restr x S=0)\}.\]
\end{df}

\begin{lemma}
For $I\idealproper\GenKc$,
\[\Filter(I) = \{S\subseteq (0,1] \text{ closed}: (\exists x\in I) (\restr x {\co S} = 1)\}.\]
\end{lemma}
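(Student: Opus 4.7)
The statement to prove is that the filter $\Filter(I)$, defined via restrictions that are merely invertible on $\co S$, coincides with what one gets if one insists on restrictions actually equal to $1$ on $\co S$. The plan is to prove the two inclusions directly, with the key idea being that the ideal $I$ absorbs multiplication.

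For the inclusion $\supseteq$, suppose there exists $x\in I$ with $\restr{x}{\co S}=1$. I would simply take $y=1\in\GenKc$; then $\restr{(xy)}{\co S}=\restr{x}{\co S}=1$, which exhibits $\restr{x}{\co S}$ as invertible, so $S\in\Filter(I)$.

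For the inclusion $\subseteq$, suppose $S\in\Filter(I)$. By definition there is $x\in I$ such that $\restr{x}{\co S}$ is invertible, i.e., there exists $y\in\GenKc$ with $\restr{(xy)}{\co S}=1$. Set $x':=xy$. Since $I$ is an ideal and $x\in I$, we have $x'\in I$, and by construction $\restr{x'}{\co S}=1$. This is exactly the required witness.

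The trivial boundary case $0\notin\clos{\co S}$ does not require separate treatment: the convention in the definition makes $\restr{x}{\co S}$ invertible automatically, and for the same vacuous reason $\restr{x}{\co S}=1$ holds for every $x\in\GenKc$ (in particular for $x=0\in I$, so $I$ being a proper ideal suffices). Thus no real obstacle arises; the lemma is essentially a restatement exploiting the fact that absorbing the inverse $y$ into $x$ stays inside $I$.
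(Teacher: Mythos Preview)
Your proof is correct and follows exactly the same idea as the paper's: for the nontrivial inclusion $\subseteq$, if $\restr{x}{\co S}$ is invertible with witness $y$, then $xy\in I$ and $\restr{(xy)}{\co S}=1$. The paper states only this direction (the other being immediate), so your write-up is simply a slightly more explicit version of the same argument.
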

\begin{proof}
If $x\in I$ and $\restr x{\co S}$ is invertible, then there exists $y\in\GenKc$ such that $\restr{(xy)}{\co S}=1$, and $xy\in I$.
\end{proof}

\begin{prop}\label{ideal-filter-eltary}
Let $I\idealproper \GenKc$ and $\Filter$ an a-filter on $(0,1]$.
\begin{enumerate}
\item $\Filter(I)$ is an a-filter on $(0,1]$.
\item $I(\Filter)\idealproper \GenKc$.
\item $\Filter(I(\Filter))\subseteq \Filter$.
\item $I(\Filter(I))\subseteq I$. 
\end{enumerate}
\end{prop}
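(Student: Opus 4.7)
The plan is to verify each of the four statements in turn, using the characterisations of invertibility and nullity on a characteristic set provided by Lemma \ref{inv-char} and Proposition \ref{zero-inv-ext}, together with the absolute-convexity of ideals noted in the preliminaries.

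For part 1, I would check the filter axioms for $\Filter(I)$ directly. Upward closure is immediate: if $S\subseteq T$ are closed and $\restr x{\co S}$ is invertible, then so is $\restr x{\co T}$ since $\co T\subseteq \co S$. That $(0,\delta]\in\Filter(I)$ for each $\delta>0$ is trivial, because $0\notin\clos{[\delta,1]}$ forces $\restr 0{[\delta,1]}$ to be vacuously invertible, with $0\in I$. Properness ($\emptyset\notin\Filter(I)$) is equivalent to $I\idealproper\GenKc$, since $\restr x{(0,1]}$ being invertible means $x$ is invertible in $\GenKc$. The only substantive point is closure under finite intersection: given $x,y\in I$ witnessing $S,T$, set $z:=x\bar x+y\bar y=\abs{x}^2+\abs{y}^2\in I$; this is bounded away from zero on $\co S\cup\co T=\co{(S\cap T)}$, so Lemma \ref{inv-char}.1(c) gives $S\cap T\in\Filter(I)$.

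Part 2 is routine. Closure of $I(\Filter)$ under subtraction uses $\restr{(x-y)}{S\cap T}=0$ together with $S\cap T\in\Filter$; absorption under multiplication by $\GenKc$ follows from the growth estimate ``moderate times locally negligible is locally negligible.'' Properness is clear: if $1\in I(\Filter)$, then $\restr 1 S=0$ for some $S\in\Filter$, contradicting the fact that $S$ is a characteristic set.

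For part 3 the key is a quantitative comparison of growth. Let $S\in\Filter(I(\Filter))$, choose $x\in I(\Filter)$ with $\restr x{\co S}$ invertible, and pick $T\in\Filter$ with $\restr x T=0$. From Lemma \ref{inv-char}.1(c) there are $n_0\in\N$ and $\delta_0>0$ with $\abs{x_\eps}\ge\eps^{n_0}$ on $\co S\cap(0,\delta_0)$, and from negligibility there is $\delta_1>0$ with $\abs{x_\eps}\le\eps^{n_0+1}$ on $T\cap(0,\delta_1)$. These estimates are incompatible for $\eps$ small, so some $\delta>0$ gives $T\cap\co S\cap(0,\delta)=\emptyset$, i.e., $T\cap(0,\delta]\subseteq S$. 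Since $T,(0,\delta]\in\Filter$, upward closure yields $S\in\Filter$.

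Part 4 is the most delicate and will be the main obstacle, since $\GenKc$ has no nontrivial idempotents and we cannot split $(0,1]$ by an idempotent as in $\GenK$. Instead I would use the decomposition trick enabled by the reformulation of $\Filter(I)$ just proved. Given $x\in I(\Filter(I))$, pick $S\in\Filter(I)$ with $\restr x S=0$ and $y\in I$ with $\restr y{\co S}=1$. Write $x=xy+x(1-y)$; the first term is in $I$ because $y$ is. For the second, $\restr{x(1-y)}S=0$ (as $x$ is negligible and $1-y$ moderate on $S$) and $\restr{x(1-y)}{\co S}=0$ (as $1-y$ is negligible and $x$ moderate on $\co S$). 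Since $(0,1]=S\cup\co S$, for each $n$ we obtain the negligibility bound on all of $(0,\min(\delta_n^S,\delta_n^{\co S}))$, so $x(1-y)=0$ in $\GenKc$ and hence $x=xy\in I$.
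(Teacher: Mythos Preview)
Your proof is correct. Parts 1--3 coincide with the paper's argument essentially line by line (the paper also uses $\abs{x}^2+\abs{y}^2$ for the intersection axiom, and the same ``$T\cap(0,\delta]\subseteq S$'' contradiction for part 3).

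In part 4 you take a slightly different route. The paper, having chosen $y\in I$ with $\restr{y}{\co S}=1$, observes directly that $\abs{x}\le \caninf^{-N}\abs{y}$ (since $\restr{x}{S}=0$ handles $S$, while on $\co S$ one has $\abs{y}$ bounded below by a constant and $\abs{x}\le\caninf^{-N}$), and then invokes absolute order convexity of ideals in $\GenKc$ \cite[Prop.~4.25]{BK} to conclude $x\in I$. Your argument instead writes $x=xy+x(1-y)$ and shows $x(1-y)=0$ by checking negligibility separately on $S$ and on $\co S$; this is equally valid and in fact more elementary, since it avoids appealing to the $l$-ideal structure. As a bonus, your computation literally establishes $x=xy$ with $y\in I$, i.e.\ $x\in \pure(I)$, which is exactly the inclusion $I(\Filter(I))\subseteq \pure(I)$ that the paper proves later (Theorem~\ref{pure-char}) by the same mechanism; so your version of part~4 already contains half of that theorem. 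The paper's one-line inequality is terser but leans on the imported convexity result; your decomposition is self-contained.
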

\begin{proof}
1. Since a proper ideal does not contain invertible elements, $\emptyset\notin\Filter(I)$.\\
If $S,T\in\Filter(I)$, then there exist  $x,y\in I$ such that $\restr x {\co S}$ and $\restr y {\co T}$ are invertible. Hence also $\abs x^2 + \abs y^2 \in I$ and $\restr{(\abs x^2 + \abs y^2)}{\co S \cup \co T}$ is invertible, so also $S\cap T\in\Filter(I)$.\\
If $S\in\Filter(I)$, $T\subseteq (0,1]$ is closed and $S\subseteq T$, then clearly $T\in\Filter(I)$.\\
If $\delta>0$, then $0\notin\clos{\co{(0,\delta]}}$, hence $\restr x {\co{(0,\delta]}}$ is (trivially) invertible for each $x\in\GenKc$.

2. If $x,y\in I(\Filter)$, then $\restr x S = 0$ and $\restr y T = 0$ for some $S,T\in\Filter$. Then also $\restr{x+y}{S\cap T}=0$ and $S\cap T\in\Filter$, so $x+y\in I(\Filter)$. For $z\in\GenKc$, also $\restr{xz}{S} = 0$, so $xz\in I(\Filter)$. $1\notin I(\Filter)$, since $\restr 1 S\ne 0$ for each $S\in\Charsets$.

3. Let $S\in\Filter(I(\Filter))$. Then there exists $x\in I(\Filter)$ such that $\restr x {\co S} = 1$. So there exists $T\in\Filter$ such that $\restr x T = 0$. Then $T\cap (0,\delta]\subseteq S$ for some $\delta>0$. For otherwise, one constructs $V\subseteq T\cap \co S$ with $0\in \clos V$ such that $\restr x V = 0$, contradicting $\restr x V = 1$. Thus $S\in\Filter$.

4. Let $x\in I(\Filter(I))$. Then there exists $S\in \Filter(I)$ such that $\restr x S = 0$. So there exists $y\in I$ such that $\restr y {\co S} = 1$. As $x\in\GenKc$, $\abs{x}\le\caninf^{-N}$ for some $N\in\N$. Then $\abs{x}\le \caninf^{-N} \abs{y}$. As $\caninf^{-N} y\in I$ and ideals in $\GenKc$ are absolutely order convex \cite[Prop.\ 4.25]{BK}, $x\in I$.
\end{proof}

\begin{prop}
Let $\Filter$ be an a-filter on $(0,1]$. Then
\begin{enumerate}
\item $\inter\Filter = \{S\in\ClosedCharsets: (\exists T\prec S)(T\in\Filter)\}$ ($\inter\Filter$ denotes the $\prec$-interior).
\item $\inter\Filter$ is an a-filter.
\end{enumerate}
\end{prop}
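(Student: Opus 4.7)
My plan is to unwind the $\prec$-topology for part 1 and then verify the four a-filter axioms directly in part 2, propagating witnesses $T\prec S$ through the filter operations.

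For part 1, the $\prec$-interior of $\Filter$ consists of those $S\in\ClosedCharsets$ admitting a basic neighborhood $(U,V)_\prec\subseteq\Filter$. For the inclusion $(\supseteq)$, suppose $T\prec S$ with $T\in\Filter$; the proposed neighborhood is $(T,(0,1])_\prec$. The inequality $S\prec(0,1]$ is automatic from the convention that closure and interior are taken in $(0,1]$, since $\inter(0,1]=(0,1]$, so $S$ lies in this basic open set. If $U\in(T,(0,1])_\prec$, then $T\subseteq\clos T\subseteq\inter U\subseteq U$ and $U$ is closed, hence $U\in\Filter$ because $\Filter$ is closed under closed supersets. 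For $(\subseteq)$, if $S\in(U,V)_\prec\subseteq\Filter$, density of $\prec$ (Lemma \ref{ext-eltair}.1) gives some $T$ with $U\prec T\prec S$; by transitivity $T\prec V$, so $T\in(U,V)_\prec\subseteq\Filter$, producing the required $T\prec S$ inside $\Filter$.

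For part 2, the characterization from part 1 makes most axioms routine. First, $\emptyset\notin\inter\Filter$ because $\inter\Filter\subseteq\Filter$. For the tail condition, given $\delta>0$ take $T=(0,\delta/2]\in\Filter$ and observe $\clos T=(0,\delta/2]\subseteq(0,\delta)\subseteq\inter(0,\delta]$, so $T\prec(0,\delta]$ and $(0,\delta]\in\inter\Filter$. For closure under closed supersets, if $T\prec S$ with $T\in\Filter$ and $S\subseteq U$ with $U$ closed, then $\inter S\subseteq\inter U$ (since $\inter S$ is open and contained in $U$), hence $T\prec U$; moreover $U$ inherits the characteristic property from the inclusion $T\subseteq U$. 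The one computation worth naming is binary intersection: given $T_i\prec S_i$ with $T_i\in\Filter$, combine $\clos(T_1\cap T_2)\subseteq\clos T_1\cap\clos T_2$ with $\inter S_1\cap\inter S_2=\inter(S_1\cap S_2)$ to get $T_1\cap T_2\prec S_1\cap S_2$, while $T_1\cap T_2\in\Filter$ by the filter axiom for $\Filter$.

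No step is genuinely difficult; the only subtlety is that the degenerate convention $(0,1]\prec(0,1]$ is what permits every basic $\prec$-neighborhood of $S$ to be taken in the form $(U,(0,1])_\prec$, removing the need to produce an upper witness $V\succ S$ by hand in the $(\supseteq)$ half of part 1.
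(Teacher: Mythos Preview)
Your argument for the inclusion $(\subseteq)$ in part~1 has a genuine gap. You write: ``if $S\in(U,V)_\prec\subseteq\Filter$, density of $\prec$ gives some $T$ with $U\prec T\prec S$; by transitivity $T\prec V$, so $T\in(U,V)_\prec\subseteq\Filter$.'' But membership in $(U,V)_\prec$ requires $T\in\ClosedCharsets$, and the lower endpoint $U$ of a basic $\prec$-open set is an \emph{arbitrary} subset of $(0,1]$, not necessarily a characteristic set (the base is indexed by all $U,V\subseteq(0,1]$). Lemma~\ref{ext-eltair}.1 is stated only for characteristic sets, and more importantly, the obstruction is not merely formal: if $U\notin\Charsets$ then the Urysohn interpolant between $U$ and $S$ need not be characteristic either. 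Concretely, if $S=\{1/n:n\ge 1\}\in\ClosedCharsets$ then $\inter S=\emptyset$, so $U\prec S$ forces $U=\emptyset$, and there is \emph{no} $T\in\ClosedCharsets$ with $T\prec S$ at all. Thus your density step simply fails to produce the required witness.

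The paper's proof confronts exactly this case. It first argues that some $V\in\ClosedCharsets$ with $V\prec S$ must exist: otherwise (w.l.o.g.\ $T$ closed) $T\notin\Charsets$, say $T\cap(0,\delta]=\emptyset$; one then picks two disjoint $W_1,W_2\in\ClosedCharsets$ inside $S$, observes that $W_j\cup[\delta/2,1]\in(T,U)_\prec\subseteq\Filter$, and intersects with a tail to force $\emptyset\in\Filter$, a contradiction. Only after securing such a $V$ does the paper use density to get a closed $W$ with $T\prec W\prec S$, and then $V\cup W\in\ClosedCharsets$ is the desired witness in $(T,U)_\prec\subseteq\Filter$. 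In short, the a-filter axioms (in particular $\emptyset\notin\Filter$ together with closure under intersections and tails) are doing real work in this direction, and your argument bypasses them. Your $(\supseteq)$ direction and part~2 are fine.
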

\begin{proof}
1. $\subseteq$: let $\mathcal X\subseteq \Filter$ be $\prec$-open. If $S\in\mathcal X$, then $S\in (T,U)_\prec\subseteq \mathcal X$, for some $T,U\subseteq (0,1]$. W.l.og., $T$ is closed. We first show that there exists $V\prec S$ with $V\in\ClosedCharsets$.\\
Otherwise, $T \notin\Charsets$, i.e., $T\cap (0,\delta]=\emptyset$ for some $\delta>0$. As $S\in\Charsets$, we can construct $W_1, W_2\subseteq S$ with $W_1, W_2\in\ClosedCharsets$ and $W_1\cap W_2=\emptyset$. Then $W_j\cup[\delta/2,1]\in(T,U)_\prec\subseteq \Filter$. Hence also $\emptyset = W_1\cap W_2\cap (0,\delta/3]\in\Filter$, a contradiction.\\
Since $\prec$ is a dense order, $T\prec W\prec S$ for some closed $W$. Hence also $T\prec V\cup W\prec S$, and $V\cup W\in\ClosedCharsets$. Thus $V\cup W\in (T,U)_\prec\subseteq \mathcal X\subseteq \Filter$. Hence $\mathcal X\subseteq \{S\in\ClosedCharsets: (\exists T\prec S)(T\in\Filter)\}$.

$\supseteq$: $\{S\in\ClosedCharsets: (\exists T\prec S)(T\in\Filter)\}\subseteq \Filter$ and is $\prec$-open: if $T\prec S$ with $T\in\Filter$, then also $S\in(T,(0,1])_\prec\subseteq \{S\in\ClosedCharsets: (\exists T\prec S)(T\in\Filter)\}$.

2. As $\inter\Filter\subseteq\Filter$, $\emptyset\notin\inter\Filter$.\\
If $U\prec S$, $V\prec T$ with $U,V\in\Filter$, then also $U\cap V\prec S\cap T$ with $U\cap V\in\Filter$.\\
The other defining properties of an a-filter are immediately checked using part 1.
\end{proof}

\begin{thm}\label{ext-open-char}\leavevmode
\begin{enumerate}
\item For each a-filter $\Filter$ on $(0,1]$, $\Filter(I(\Filter)) = \inter \Filter$.
\item $\{\Filter(I): I\idealproper \GenKc\}$ is the set of $\prec$-open a-filters on $(0,1]$.
\end{enumerate}
\end{thm}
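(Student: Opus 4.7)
For part 1, I will prove the two inclusions separately. To show $\Filter(I(\Filter))\subseteq\inter\Filter$, given such an $S$ I pick $x\in I(\Filter)$ with $\restr{x}{\co S}=1$ (via the lemma preceding Prop.~\ref{ideal-filter-eltary}) and $T\in\Filter$ with $\restr{x}{T}=0$. Applying Prop.~\ref{zero-inv-ext}.1 produces an open extension $T\prec T'$ on which $x$ still vanishes; the asymptotic incompatibility of the values $0$ on $T'$ and $1$ on $\co S$ forces $T'\cap(0,\delta']\subseteq S$ for some $\delta'>0$, and hence $T'\cap(0,\delta')\subseteq\inter S$ since $T'$ is open. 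Setting $U:=T\cap(0,\delta'/2]\in\Filter$, I obtain $\clos U\subseteq T\cap(0,\delta'/2]\subseteq T'\cap(0,\delta')\subseteq\inter S$, so $U\prec S$, and the preceding proposition yields $S\in\inter\Filter$. For the reverse inclusion, given $S\in\inter\Filter$ I pick $U\in\Filter$ with $U\prec S$ and invoke Urysohn's lemma on the disjoint closed sets $\clos U$ and $\co{\inter S}$ in $(0,1]$ to obtain $\phi\in\Cnt((0,1])$ with $0\le\phi\le 1$, vanishing on $\clos U$ and equal to $1$ on $\co{\inter S}$. Then $x:=[\phi]\in\GenKc$ lies in $I(\Filter)$ (since $\restr{x}{U}=0$) and has $\restr{x}{\co S}=1$ invertible, so $S\in\Filter(I(\Filter))$.

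For part 2, I reduce both inclusions to part 1. The forward inclusion requires $\Filter(I)$ to be $\prec$-open (a-filter-ness being Prop.~\ref{ideal-filter-eltary}.1). Given $S\in\Filter(I)$, I split into three subcases. If $S=(0,1]$, use the convention $(0,1]\prec(0,1]$. If $(0,\delta_0]\subseteq S$ for some $\delta_0>0$, then $(0,\delta_0/2]\in\Filter(I)$ is a $\prec$-predecessor of $S$. Otherwise $\co S\in\Charsets$; pick $x\in I$ with $\restr{x}{\co S}$ invertible and apply Prop.~\ref{zero-inv-ext}.2 to obtain an open $T\succ\co S$ with $\restr{x}{T}$ invertible. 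Then $S':=\co T$ satisfies $S'\prec S$ by Lemma~\ref{ext-eltair}.2 and lies in $\Filter(I)$ via $\restr{x}{\co{S'}}=\restr{x}{T}$ (so $S'\in\ClosedCharsets$ automatically, as $\Filter(I)$ is an a-filter; alternatively, $0\notin\clos{S'}$ would force $x$ to be a unit, contradicting $x\in I$). The reverse inclusion is immediate from part 1: any $\prec$-open a-filter $\Filter$ satisfies $\Filter=\inter\Filter=\Filter(I(\Filter))$, with $I(\Filter)\idealproper\GenKc$ by Prop.~\ref{ideal-filter-eltary}.2.

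The main obstacle is the first inclusion of part 1: producing $U\in\Filter$ that is strictly $\prec$-below $S$, not merely a subset. The $T\in\Filter$ at hand is closed but need not sit in its own interior, so naively truncating $T$ by $(0,\delta']$ would only yield $\clos U\subseteq S$ rather than $\clos U\subseteq\inter S$. The fix is first to thicken $T$ to an open extension $T'\succ T$ via Prop.~\ref{zero-inv-ext}.1; truncating at $\delta'/2$ then places $\clos U$ inside the open set $T'\cap(0,\delta')$, which lies in $\inter S$ by the incompatibility argument.
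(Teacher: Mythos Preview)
Your argument is essentially correct, with one small repair needed in part~1: Proposition~\ref{zero-inv-ext}.1 does not guarantee an \emph{open} extension $T'$ (the set built in its proof involves the closed level-sets $\{\eps:\abs{x_\eps}\le 2\eps^n\}$). Pass instead to $\inter{T'}$, which still contains $\clos T$ by the very definition of $\prec$, still has $\restr x{\inter{T'}}=0$, and remains characteristic since it contains $T$; the rest of your argument then goes through unchanged.

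The paper organizes the proof more economically. Rather than treating the inclusion $\Filter(I(\Filter))\subseteq\inter\Filter$ and the $\prec$-openness of $\Filter(I)$ as two separate tasks, it proves the latter once and for all, for arbitrary $I\idealproper\GenKc$: given $S\in\Filter(I)$, pick $x\in I$ with $\restr x{\co S}$ invertible, apply Proposition~\ref{zero-inv-ext}.2 (the invertibility side, whose constructed extension \emph{is} open) to obtain an open $T\succ\co S$ with $\restr x T$ invertible, and conclude $\co T\in\Filter(I)$ with $\co T\prec S$. This single observation already is the forward direction of part~2, and combined with $\Filter(I(\Filter))\subseteq\Filter$ (Proposition~\ref{ideal-filter-eltary}.3) it immediately yields $\Filter(I(\Filter))\subseteq\inter\Filter$ for part~1. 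Your route---working on the zero side via Proposition~\ref{zero-inv-ext}.1 for part~1, then reproving openness separately for part~2---is valid but duplicates effort and forces you to wrestle with the non-openness of the zero-side extension. On the other hand, your case split in part~2 (on whether $\co S\in\Charsets$) is more scrupulous than the paper, which tacitly invokes Proposition~\ref{zero-inv-ext}.2 even when $\co S$ fails to be characteristic.
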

\begin{proof}
First, let $I\idealproper\GenKc$. We show that $\Filter(I)$ is $\prec$-open:\\
Let $S\in\Filter(I)$. Then there exists $x\in I$ such that $\restr x {\co S}$ is invertible. By proposition \ref{zero-inv-ext}, there exists $T\succ \co S$ such that $\restr x T$ is invertible. W.l.og.\ $T$ is open. Then $\co T\in \Filter(I)$ and $\co T\prec S$.\\
In particular, $\Filter(I(\Filter))\subseteq \Filter$ is $\prec$-open, and hence $\Filter(I(\Filter))\subseteq \inter\Filter$.\\
Conversely, we show that $\inter\Filter\subseteq \Filter(I(\Filter))$:\\
Let $S\in\inter\Filter$. Then there exists $T\prec S$ such that $T\in\Filter$. By Urysohn's lemma, there exists $x\in\GenKc$ such that $\restr x T = 0$ and $\restr x {\co S} = 1$. Hence $x\in I(\Filter)$ and $S\in\Filter(I(\Filter))$.\\
Finally, if an a-filter $\Filter$ is $\prec$-open, then $\Filter = \inter\Filter = \Filter(I(\Filter))$, hence $\Filter=\Filter(I$) for some $I\idealproper\GenKc$.
\end{proof}

\begin{thm}\label{pure-char}\leavevmode
\begin{enumerate}
\item For each $I\ideal\GenKc$, $I(\Filter(I)) = \pure(I)$.
\item $\{I(\Filter): \Filter$ is an a-filter on $(0,1]\}$ is the set of (proper) pure ideals in $\GenKc$.
\end{enumerate}
\end{thm}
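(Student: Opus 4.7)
My plan is to prove part~1 as two inclusions $\pure(I)\subseteq I(\Filter(I))$ and $I(\Filter(I))\subseteq\pure(I)$, and then read off part~2 by combining part~1 with Theorem~\ref{ext-open-char}.

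For $\pure(I)\subseteq I(\Filter(I))$, I will start from $x\in\pure(I)$ with a witness $y\in I$ satisfying $x=xy$, equivalently $x(1-y)=0$ in $\GenKc$. Applying Lemma~\ref{zero-product} with $a=x$, $b=1-y$ and $S=(0,1]$ produces closed sets $T,U$ with $(0,1]\subseteq\inter T\cup\inter U$, $\restr{x}{T}=0$ and $\restr{(1-y)}{U}=0$. The cover forces $\co T\subseteq\inter U\subseteq U$, hence $\restr{y}{\co T}=1$ is invertible, so $T\in\Filter(I)$ with $y$ as witness; together with $\restr{x}{T}=0$ this places $x$ in $I(\Filter(I))$. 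For the reverse inclusion, I pick $x\in I(\Filter(I))$, some $S\in\Filter(I)$ with $\restr{x}{S}=0$, and $y\in I$ with $\restr{y}{\co S}=1$ (using the reformulation of $\Filter(I)$ in the lemma just after its definition). Proposition~\ref{zero-inv-ext}.1 supplies a closed $T\succ S$ with $\restr{x}{T}=0$; since $S\subseteq T$, also $\co T\subseteq\co S$. Then $x(y-1)$ is negligible on $T$ (as $x$ is negligible and $y$ moderate) and on $\co T$ (as $y-1$ is negligible and $x$ moderate), so $x=xy$ with $y\in I$, giving $x\in\pure(I)$.

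Part~2 then follows formally. For any a-filter $\Filter$, $I(\Filter)$ is a proper ideal by Proposition~\ref{ideal-filter-eltary}.2, and part~1 applied to $I=I(\Filter)$ together with Theorem~\ref{ext-open-char}.1 yields $\pure(I(\Filter))=I(\Filter(I(\Filter)))=I(\inter\Filter)$. The inclusion $I(\inter\Filter)\subseteq I(\Filter)$ is immediate since $\inter\Filter\subseteq\Filter$; conversely, any $x\in I(\Filter)$ has $\restr{x}{S}=0$ for some $S\in\Filter$, and Proposition~\ref{zero-inv-ext}.1 enlarges $S$ to $T\succ S$ still with $\restr{x}{T}=0$, whence $T\in\inter\Filter$ by its explicit description. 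Hence $I(\Filter)=I(\inter\Filter)=\pure(I(\Filter))$ is pure. In the opposite direction, any proper pure ideal $I$ is of the desired form because $I=\pure(I)=I(\Filter(I))$ by part~1.

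The main technical point I expect to watch out for is that the closed sets $T$ produced by Lemma~\ref{zero-product} and Proposition~\ref{zero-inv-ext} are a priori only closed, not characteristic, whereas asymptotic filters live in $\ClosedCharsets$. Properness of $I$ resolves this automatically: if $(0,\delta)\subseteq\co T$ for some $\delta>0$, the witness $y\in I$ satisfying $\restr{y}{\co T}=1$ would be bounded away from zero on a neighborhood of $0$, hence invertible in $\GenKc$, contradicting $y\in I\ne\GenKc$. This is what makes Lemma~\ref{zero-product} mesh cleanly with the definition of $\Filter(I)$.
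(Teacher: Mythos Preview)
Your proof is correct. The inclusion $\pure(I)\subseteq I(\Filter(I))$ via Lemma~\ref{zero-product} matches the paper exactly. For the reverse inclusion and for part~2 you organize things differently. The paper first proves that $I(\Filter)$ is pure for \emph{every} a-filter $\Filter$, using Proposition~\ref{zero-inv-ext} together with Urysohn's lemma to manufacture a witness $y\in I(\Filter)$ with $x=xy$; this single argument then yields both $I(\Filter(I))\subseteq\pure(I)$ (specialize to $\Filter=\Filter(I)$ and use $I(\Filter(I))\subseteq I$) and the forward direction of part~2 at once. You instead reuse the witness $y\in I$ already supplied by $S\in\Filter(I)$ to obtain $x=xy$ directly---no Urysohn needed, and in fact your enlargement to $T\succ S$ is superfluous here, since $\restr{x}{S}=0$ and $\restr{(y-1)}{\co S}=0$ already force $x(y-1)=0$ on $(0,1]=S\cup\co S$. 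The price is that for part~2 you must invoke Theorem~\ref{ext-open-char} and establish $I(\inter\Filter)=I(\Filter)$ separately (a fact the paper records later, in the proof of Theorem~\ref{equal-ideal-of-filter}). Either organization works: the paper's Urysohn construction does double duty and is shorter overall, while yours is more elementary for part~1 but takes a longer detour for part~2. Your closing remark about $T$ having to be characteristic is correct and is precisely the argument behind $\emptyset\notin\Filter(I)$ in Proposition~\ref{ideal-filter-eltary}.1.
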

\begin{proof}
First, let $\Filter$ be an a-filter on $(0,1]$. We show that $I(\Filter)$ is pure:\\
Let $x\in I(\Filter)$. Then there exists $S\in\mathcal F$ such that $\restr x S = 0$. By proposition \ref{zero-inv-ext}, $\restr x T = 0$ for some $T\succ S$. By Urysohn's lemma, there exists $y\in \GenKc$ such that $\restr y S=0$, $\restr y {\co T} = 1$. Then $\restr{(xy)}T = 0$ and $\restr{(xy)}{\co T} = \restr{x}{\co T}$. Hence $x=xy$ and $y\in I(\Filter)$.\\
In particular, $I(\Filter(I))\subseteq I$ is pure for each $I\idealproper \GenKc$, and hence $I(\Filter(I))\subseteq \pure(I)$.\\
Conversely, we show that $\pure(I)\subseteq I(\Filter(I))$ for each $I\idealproper \GenKc$:\\
Let $x\in \pure(I)$, i.e., there exists $y\in I$ such that $x=xy$. As $x(1-y)=0$, there exist (by lemma \ref{zero-product}) closed $S,T\subseteq (0,1]$ with $S\cup T=(0,1]$ such that $\restr x S = 0$ and $\restr{(1-y)} T = 0$. Hence $\restr{y}{\co S} =1$, so $S\in \Filter(I)$, and $x\in I(\Filter(I))$.\\
Finally, if $I\idealproper \GenKc$ is pure, then $I=\pure(I)= I(\Filter(I))$, hence $I=I(\Filter)$ for some a-filter $\Filter$ on $(0,1]$.
\end{proof}

\section{Closed ideals and filters}
We will denote $\clos I(\Filter):= \clos{I(\Filter)}$ (closure in the sharp topology) and $\clos\Filter(I):= \clos{\Filter(I)}$ ($\prec$-closure).

\begin{prop}
Let $\Filter$ be an a-filter on $(0,1]$. Then
\begin{enumerate}
\item $\clos\Filter = \{S\in\ClosedCharsets: (\forall T\succ S$, $T$ closed$)(T\in\Filter)\}$.
\item $\clos\Filter$ is an a-filter.
\end{enumerate}
\end{prop}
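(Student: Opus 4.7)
The plan is to prove part 1 first by unwinding the definition of $\prec$-closure, then use that concrete description together with a single Urysohn-based decomposition lemma to verify the a-filter axioms in part 2. The main tools throughout are density of $\prec$ (Lemma~\ref{ext-eltair}.1) and Urysohn's lemma on the normal space $(0,1]$.

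For part 1, $S\in\clos\Filter$ means every basic $\prec$-neighbourhood $(U,V)_\prec$ containing $S$ meets $\Filter$. For the inclusion $\supseteq$, I take $S\in\ClosedCharsets$ satisfying the right-hand side and a basic neighbourhood with $U\prec S\prec V$; density applied to $S\prec V$ produces a closed $W\in\ClosedCharsets$ with $S\prec W\prec V$, so $W\in\Filter$ by hypothesis, and the transitive chain $U\prec S\prec W$ places $W$ in $\Filter\cap(U,V)_\prec$. For $\subseteq$, given $S\in\clos\Filter$ and a closed $T\succ S$, the basic neighbourhood $(\emptyset,T)_\prec$ (note $\emptyset\prec X$ is automatic for any $X$) contains $S$ and hence meets $\Filter$ at some $X$; then $X\subseteq\clos X\subseteq\inter T\subseteq T$, and upward closure of $\Filter$ forces $T\in\Filter$.

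For part 2, emptiness is immediate from $\clos\Filter\subseteq\ClosedCharsets$. Upward closure is easy via part 1: if $S\subseteq T$ with $S\in\clos\Filter$ and $T\in\ClosedCharsets$, then any closed $U\succ T$ is also $\succ S$ and hence in $\Filter$. Containment of each $(0,\delta]$ is equally easy, since any closed $U\succ(0,\delta]$ already contains $(0,\delta]\in\Filter$. The content lies in closure under intersection.

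The key ingredient is the following decomposition: given $S,T\in\clos\Filter$ and any closed $W$ with $S\cap T\subseteq\inter W$, there exist closed $V_1\succ S$ and $V_2\succ T$ with $V_1\cap V_2=W$. To build them, observe that $S\setminus\inter W$ and $T\setminus\inter W$ are disjoint closed subsets of $(0,1]$ (their intersection lies in $(S\cap T)\setminus\inter W=\emptyset$), apply Urysohn to obtain a continuous $\phi:(0,1]\to[0,1]$ with $\phi=0$ on the first and $\phi=1$ on the second, and set $U_1:=\{\phi\le 1/3\}$, $U_2:=\{\phi\ge 2/3\}$, $V_1:=U_1\cup W$, $V_2:=U_2\cup W$. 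Splitting $S=(S\cap\inter W)\cup(S\setminus\inter W)$ gives $V_1\succ S$ and symmetrically $V_2\succ T$; $U_1\cap U_2=\emptyset$ yields $V_1\cap V_2=W$. Part 1 applied to $S,T$ then forces $V_1,V_2\in\Filter$, hence $W\in\Filter$. Plugging $W=[\delta_0/2,1]$ into this under the assumption $S\cap T\cap(0,\delta_0]=\emptyset$ would place $[\delta_0/2,1]$ in $\Filter\subseteq\ClosedCharsets$, a contradiction, so $S\cap T\in\ClosedCharsets$; plugging in arbitrary closed $W\succ S\cap T$ shows every such $W$ is in $\Filter$, and part 1 concludes $S\cap T\in\clos\Filter$. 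The main obstacle is exactly this decomposition: once one notices that $S\cap T\subseteq\inter W$ makes $S\setminus\inter W$ and $T\setminus\inter W$ disjoint, Urysohn produces $W$-relative thickenings $V_1,V_2$ whose intersection collapses back to $W$.
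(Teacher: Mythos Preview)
Your proof is correct and follows the same overall strategy as the paper: part~1 via the basic $\prec$-topology and density, part~2 by showing that any closed $W\succ S\cap T$ lies in $\Filter$ through a decomposition $V_1\succ S$, $V_2\succ T$ with $V_1\cap V_2\subseteq W$. The implementations differ slightly. For part~1 the paper argues via the ``smallest closed superset'' description of closure, while you use the neighbourhood characterization directly; these are equivalent formalizations. For the intersection step in part~2, the paper separates $S_1\setminus S_2$ from $S_2\setminus S_1$ using the distance comparison sets $U_j=\{\eps: d(\eps,S_j)<d(\eps,S_{3-j})\}$ and takes $V_j=U_j\cup\inter T$, which are \emph{open}; one then has to insert (by normality) a closed set between $S_j$ and $V_j$ before invoking part~1. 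Your Urysohn construction separates $S\setminus\inter W$ from $T\setminus\inter W$ and yields $V_j=U_j\cup W$ already closed, with $V_1\cap V_2=W$ exactly, so part~1 applies immediately. You also make explicit the verification that $S\cap T\in\ClosedCharsets$ (via the $W=[\delta_0/2,1]$ contradiction), which the paper leaves implicit. The two decompositions are morally the same trick, but yours is tidier on exactly the point where the paper's write-up is loose.
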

\begin{proof}
1. Call $\Filter^*:= \{S\in\ClosedCharsets: (\forall T\succ S$, $T$ closed$)(T\in\Filter)\}$.\\
$\subseteq$: $\Filter\subseteq \Filter^*$ and $\Filter^*$ is $\prec$-closed: if $S\in \ClosedCharsets\setminus \Filter^*$, then there exists a closed $T\succ S$ with $T\notin\Filter$, hence also $(\emptyset, T)_\prec\subseteq \ClosedCharsets\setminus \Filter^*$.

$\supseteq$: let $\mathcal X \supseteq\Filter$ be $\prec$-closed. Let $S\in\ClosedCharsets\setminus \mathcal X$. Then $S\in (T,U)_\prec\subseteq \ClosedCharsets\setminus \mathcal X$ for some $T,U\in\ClosedCharsets$. As $\prec$ is a dense order, $S\prec V\prec U$ for some closed $V$, and $V\in (T,U)_\prec\subseteq \ClosedCharsets\setminus\mathcal X\subseteq\ClosedCharsets\setminus \Filter$. Thus $S\notin\Filter^*$. Hence $\Filter^*\subseteq \mathcal X$.

2. $\emptyset\notin\clos\Filter$, since $\emptyset\notin\ClosedCharsets$.\\
Let $S_1$, $S_2$ $\in\clos\Filter$ and let $T\succ S_1\cap S_2$. Let
\begin{align*}
U_1 &= \{\eps\in(0,1]: d(\eps,S_1)<d(\eps,S_2)\}\\
U_2 &= \{\eps\in(0,1]: d(\eps,S_2)<d(\eps,S_1)\}.
\end{align*}
Let $V_1:= U_1\cup \inter T$ and $V_2:= U_2\cup \inter T$. Then $S_1 = (S_1\setminus S_2) \cup (S_1\cap S_2)\subseteq U_1 \cup \inter T = V_1$ since $S_2$ is closed. Since $V_1$ is open, $S_1\prec V_1$. Hence $V_1\in\Filter$. Similarly, $V_2\in\Filter$. As $V_1\cap V_2\subseteq T$, $T\in\Filter$. We conclude that $S_1\cap S_2\in\clos\Filter$.\\
The other defining properties of an a-filter are immediately checked using part 1.
\end{proof}

\begin{cor}\label{clos-of-open}
If $\Filter$ is an a-filter on $(0,1]$, then $\clos{\inter\Filter}=\clos\Filter$.
\end{cor}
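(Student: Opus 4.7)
The plan is to verify the two inclusions separately using the characterizations of $\inter\Filter$ and $\clos\Filter$ established in the two preceding propositions.

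The inclusion $\clos{\inter\Filter}\subseteq \clos\Filter$ is essentially automatic: since $\inter\Filter\subseteq\Filter$, taking $\prec$-closures on both sides (monotonicity of closure) yields the inclusion immediately.

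The interesting direction is $\clos\Filter\subseteq \clos{\inter\Filter}$. Here I would unfold both sides via the characterizations: a set $S\in\ClosedCharsets$ lies in $\clos{\inter\Filter}$ iff every closed $T\succ S$ belongs to $\inter\Filter$, which in turn (by Proposition 4.7.1 applied to $\inter\Filter$, together with Proposition 4.6.1 for its $\prec$-interior description) means that for every closed $T\succ S$ there exists $U\prec T$ with $U\in\Filter$. So, starting from $S\in\clos\Filter$ and an arbitrary closed $T\succ S$, I need to produce some $U\in\Filter$ with $U\prec T$.

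The key step — and essentially the only nonroutine one — is an interpolation argument: since $\prec$ is a dense order on $\Charsets\setminus\{(0,1]\}$ by Lemma~\ref{ext-eltair}.1, there is some $U$ with $S\prec U \prec T$, and inspection of the construction in that lemma (namely $U=\{\phi\le 1/2\}$ for the Urysohn function $\phi$) shows that $U$ may be taken to be closed. Then $U$ is a closed extension of $S$, hence $U\in\Filter$ because $S\in\clos\Filter$ (using the characterization of $\clos\Filter$ from Proposition~4.7.1). Since also $U\prec T$, the set $T$ lies in $\inter\Filter$, as required. I would note at the outset that if $T=(0,1]$, the density argument needs the convention $(0,1]\prec(0,1]$ to ensure $T\in\inter\Filter$ trivially, so the argument remains uniform in this boundary case. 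I do not anticipate any real obstacle; the whole argument is two lines once the characterizations are in place.
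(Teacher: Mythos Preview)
Your argument is correct, and it uses the same two characterizations (of $\inter\Filter$ and $\clos\Filter$) that the paper does. The difference is that you prove $\clos\Filter\subseteq\clos{\inter\Filter}$ directly, which forces you to start from $S\in\clos\Filter$ (possibly not in $\Filter$) and hence to interpolate a closed $U$ with $S\prec U\prec T$ before you can invoke membership in $\Filter$. The paper instead proves the weaker statement $\Filter\subseteq\clos{\inter\Filter}$ and then takes closures: starting from $S\in\Filter$, any closed $T\succ S$ lies in $\inter\Filter$ immediately, with $S$ itself as the required $\Filter$-witness below $T$. So what you call ``the key step --- and essentially the only nonroutine one'' (the density interpolation) is in fact avoidable; the paper's version is a one-liner with no appeal to Lemma~\ref{ext-eltair} and no boundary case $T=(0,1]$ to worry about.
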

\begin{proof}
$\supseteq$: since $\inter\Filter\subseteq\Filter$.\\
$\subseteq$: it suffices to show that $\Filter\subseteq\clos{\inter\Filter}$. Let $S\in\Filter$. Let $T\subseteq(0,1]$ be closed such that $T\succ S$. Then $T\in \inter\Filter$. Hence $S\in\clos{\inter\Filter}$.
\end{proof}

\begin{thm}\label{clos-ideal-char}
Let $\Filter$ be an a-filter. Then
\[\clos I(\Filter)= \{x\in\GenKc: (\forall S\in\ClosedCharsets) (\restr{x}{\co S} \text{ invertible} \implies S\in \Filter)\}.\]
\end{thm}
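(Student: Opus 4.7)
I prove the two inclusions separately.

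\emph{The inclusion $\subseteq$.} Fix $x\in\clos I(\Filter)$ and $S\in\ClosedCharsets$ with $\restr{x}{\co S}$ invertible; by Lemma \ref{inv-char} there are $m\in\N$, $\delta>0$ with $\abs{x_\eps}\ge\eps^m$ on $\co S\cap(0,\delta]$. Since $x$ is in the sharp closure of $I(\Filter)$, pick $y\in I(\Filter)$ with $v(x-y)\ge m+2$, so that $\abs{x_\eps-y_\eps}\le\eps^{m+1}$ for small $\eps$. By the triangle inequality $\abs{y_\eps}\ge\eps^m-\eps^{m+1}\ge\eps^m/2$ on $\co S$ near $0$, hence $\restr{y}{\co S}$ is invertible. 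Since $y\in I(\Filter)$ there is $T\in\Filter$ with $\restr{y}{T}=0$, so $\abs{y_\eps}\le\eps^{m+1}$ on $T$ near $0$. On $T\cap\co S$ near $0$ one would have $\eps^m/2\le\abs{y_\eps}\le\eps^{m+1}$, which fails once $\eps<1/2$; hence $T\cap(0,\delta^\ast]\subseteq S$ for some $\delta^\ast>0$. Both $T$ and $(0,\delta^\ast]$ lie in the a-filter $\Filter$, so their intersection does, and upward closure gives $S\in\Filter$.

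\emph{The inclusion $\supseteq$.} Let $x$ lie in the right-hand side. It suffices to find, for every $n\in\N$, some $y_n\in I(\Filter)$ with $\abs{x-y_n}\le 2\caninf^n$. Fix a continuous representative $(x_\eps)_\eps$ and $n$, and let
\[T_n:=\{\eps\in(0,1]:\abs{x_\eps}\le\eps^n\},\qquad U_n:=\{\eps\in(0,1]:\abs{x_\eps}\ge 2\eps^n\},\]
both closed and disjoint. Urysohn's lemma produces a continuous $\phi_n:(0,1]\to[0,1]$ with $\phi_n\equiv 0$ on $T_n$ and $\phi_n\equiv 1$ on $U_n$; let $y_n\in\GenKc$ be the class of $(\phi_n(\eps)x_\eps)_\eps$ (continuous and moderate since $\abs{\phi_n}\le 1$). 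Then $\restr{y_n}{T_n}=0$, and $\abs{x-y_n}_\eps=(1-\phi_n(\eps))\abs{x_\eps}$ vanishes on $U_n$ and is $\le 2\eps^n$ off $U_n$, giving $\abs{x-y_n}\le 2\caninf^n$ as required.

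\emph{The main obstacle} is ensuring $T_n\in\Filter$, for this is what makes $y_n\in I(\Filter)$. If $0\in\clos{T_n}$, then $T_n\in\ClosedCharsets$; on $\co T_n$ one has $\abs{x_\eps}>\eps^n\ge\eps^{n+1}$ for $\eps\le 1$, so $\restr{x}{\co T_n}$ is invertible by Lemma \ref{inv-char}, and the hypothesis on $x$ forces $T_n\in\Filter$. Otherwise $T_n\cap(0,\delta_0]=\emptyset$ for some $\delta_0>0$, so $\abs{x_\eps}>\eps^n$ on $(0,\delta_0]$; but this already makes $x$ invertible in $\GenKc$, whence $\restr{x}{\co S}$ is invertible for \emph{every} $S\in\ClosedCharsets$, and the hypothesis would then place every asymptotic set in $\Filter$. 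Since $\ClosedCharsets$ contains disjoint pairs (e.g.\ $\{1/(2k):k\ge 1\}$ and $\{1/(2k+1):k\ge 1\}$), this contradicts $\emptyset\notin\Filter$; so this second alternative never arises and the construction succeeds for every $n$.
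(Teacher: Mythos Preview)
Your proof is correct and follows essentially the same approach as the paper: both directions rest on the level sets $\{\eps:\abs{x_\eps}\le\eps^n\}$, Urysohn cutoffs, and the openness of invertibility (Lemma~\ref{inv-char}). Your argument is in fact slightly more careful than the paper's in one respect: you explicitly rule out the degenerate possibility that $T_n$ fails to be an asymptotic set (equivalently, that $x$ is invertible), a case the paper's proof passes over silently when it asserts $\co L_n\in\Filter$.
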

\begin{proof}
Call $I^+(\Filter):=\{x\in\GenKc: (\forall S\in\ClosedCharsets) (\restr{x}{\co S} \text{ invertible} \implies S\in \Filter)\}$.\\
We first show that $I^+(\Filter)$ is closed:\\
If $a\in\GenKc\setminus I^+(\Filter)$, then there exists $S\in\ClosedCharsets\setminus \Filter$ such that $\restr a {\co S}$ is invertible. By lemma \ref{inv-char}, $\restr x {\co S}$ is invertible for each $x$ in a certain neighborhood of $a$. Then such $x\notin I^+(\Filter)$, too. Hence $\GenKc\setminus I^+(\Filter)$ is open.\\
We now show that $I(\Filter)\subseteq I^+(\Filter)$:\\
Let $x\in I(\Filter)$. Then $\restr x S=0$ for some $S\in\Filter$. Let $T\in\ClosedCharsets$ such that $\restr x {\co T}$ is invertible. Then $S\cap(0,\delta)\setminus T=\emptyset$ for some $\delta >0$, for otherwise, $0\in \clos{S\setminus T}$ and $\restr x {S\setminus T} = 0$ and $\restr x {S\setminus T}$ is invertible, a contradiction. Hence $S\cap (0,\delta)\subseteq T$, and $T\in\Filter$. Thus $x\in I^+(\Filter)$.\\
Finally, we show that $I^+(\Filter)\subseteq \clos I(\Filter)$:\\
Let $x=[x_\eps]\in I^+(\Filter)$. Consider the sets $L_n:= \{\eps: \abs{x_\eps}> \eps^n\}$. As $\restr x {L_n}$ is invertible, $\co L_n\in\Filter$, for each $n\in\N$. Further, $L_n\prec L_{n+1}$ for each $n\in\N$. By Urysohn's lemma, there exist $y_n\in\GenKc$ such that $\restr{y_n}{L_n}=1$ and $\restr{y_n}{\co L_{n+1}} = 0$ and $0\le y_n\le 1$. Then $\restr{\abs{x y_{n} - x}}{L_n}=0$ and $\restr{\abs{x y_{n} - x}}{\co L_n}\le \restr{\abs x}{\co L_n}\le \caninf^n$. Hence $\abs{x y_n - x}\le \caninf^n$, and $\lim_{n\to\infty} xy_n = x$. As $\restr{(x y_n)}{\co L_{n+1}} = 0$, $xy_n\in I(\Filter)$, for each $n$.
\end{proof}

\begin{cor}\label{clos-of-pure}
Let $I\idealproper\GenKc$. Then $I(\Filter(I))\subseteq I\subseteq \clos I(\Filter(I))$ and $\clos I = \clos{\pure(I)}$. 
\end{cor}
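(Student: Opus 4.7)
The plan is to deduce both statements by stacking the two preceding theorems: Theorem~\ref{pure-char}.1, which identifies $I(\Filter(I))$ with $\pure(I)$, and Theorem~\ref{clos-ideal-char}, which describes $\clos{I(\Filter)}$ in terms of invertibility of restrictions to complements of asymptotic sets. There is no substantial new content to discover; everything has been set up.

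First I would dispose of the inclusion $I(\Filter(I))\subseteq I$: this is immediate from Theorem~\ref{pure-char}.1, because $I(\Filter(I))=\pure(I)$ and the pure part of an ideal is by definition contained in that ideal. Next, for $I\subseteq \clos I(\Filter(I))$, I would apply Theorem~\ref{clos-ideal-char} with $\Filter:=\Filter(I)$, which rewrites the right-hand side as the set of $x\in\GenKc$ such that, for every $S\in\ClosedCharsets$ with $\restr{x}{\co S}$ invertible, one has $S\in\Filter(I)$. For $x\in I$ this condition holds tautologically: $\Filter(I)$ was defined exactly as the collection of closed $S$ for which some element of $I$ has invertible restriction to $\co S$, and $x$ itself is such a witness.

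For the equality $\clos I=\clos{\pure(I)}$, I would combine the two inclusions just obtained with the identification $\pure(I)=I(\Filter(I))$: monotonicity of closure applied to $\pure(I)\subseteq I$ gives $\clos{\pure(I)}\subseteq\clos I$, while applying closure to $I\subseteq \clos{I(\Filter(I))}=\clos{\pure(I)}$ (the right-hand side being already closed) yields the reverse inclusion.

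The main ``obstacle'' is really no obstacle at all; it is just noticing that the hypothesis in the Theorem~\ref{clos-ideal-char} description is exactly the definition of $\Filter(I)$, so that the verification for arbitrary $x\in I$ reduces to an unpacking of definitions rather than to any genuine construction.
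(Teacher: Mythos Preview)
Your argument is correct and matches the paper's proof essentially line for line: both use Theorem~\ref{clos-ideal-char} with $\Filter=\Filter(I)$ and the definition of $\Filter(I)$ to get $I\subseteq\clos I(\Filter(I))$, then combine with $I(\Filter(I))\subseteq I$ and the identification $I(\Filter(I))=\pure(I)$ from Theorem~\ref{pure-char} to conclude. The only cosmetic difference is that the paper cites Proposition~\ref{ideal-filter-eltary}.4 directly for $I(\Filter(I))\subseteq I$, whereas you route this through $\pure(I)\subseteq I$.
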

\begin{proof}
$I\subseteq \clos I(\Filter(I))$: let $x\in I$. Let $S\in\ClosedCharsets$ such that $\restr x {\co S}$ is invertible. Then $S\in\Filter(I)$. Hence by theorem \ref{clos-ideal-char}, $x\in \clos I(\Filter(I))$.\\
By proposition \ref{ideal-filter-eltary}, $I(\Filter(I))\subseteq I$. Hence $\clos I = \clos I(\Filter(I)) = \clos{\pure(I)}$ by theorem \ref{pure-char}.
\end{proof}

\begin{thm}
Let $I\idealproper\GenKc$. Then
\[\clos \Filter(I)=\{S\in\ClosedCharsets:(\forall x\in \GenKc) (\restr x S =0\implies x\in I)\}.\]
\end{thm}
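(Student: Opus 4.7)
The plan is to prove both inclusions, abbreviating the right-hand side as $\Filter^+(I)$. The key structural input is the preceding proposition's description of the $\prec$-closure: $S\in\clos\Filter$ iff every closed $T\succ S$ lies in $\Filter$. Combined with proposition \ref{zero-inv-ext}.1 (extending asymptotic vanishing from $S$ to a $\prec$-neighborhood) and Urysohn's lemma, this should suffice.

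For $\clos\Filter(I)\subseteq\Filter^+(I)$, I take $S\in\clos\Filter(I)$ and $x\in\GenKc$ with $\restr x S=0$, and aim to produce $x\in I$. Proposition \ref{zero-inv-ext}.1 gives some $U\succ S$ with $\restr x U=0$; by density of $\prec$ (lemma \ref{ext-eltair}.1) I pick $V$ with $S\prec V\prec U$ and set $T:=\clos V$, which is closed, still satisfies $S\prec T$, and sits inside $U$ so that $\restr x T=0$. By the $\clos\Filter$-characterization, $T\in\Filter(I)$, hence there exist $y\in I$ and $z\in\GenKc$ with $\restr{(yz)}{\co T}=1$. Then $xyz\in I$; and $x-xyz$ vanishes on $T$ (because $\restr x T=0$) and on $\co T$ (because $\restr{(yz)}{\co T}=1$ makes $\restr{(xyz)}{\co T}=\restr x{\co T}$). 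Since $T\cup\co T=(0,1]$, this forces $x=xyz\in I$.

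For the reverse inclusion $\Filter^+(I)\subseteq\clos\Filter(I)$, I fix $S\in\Filter^+(I)$ and an arbitrary closed $T\succ S$, and verify $T\in\Filter(I)$. The sets $\clos S$ and $\clos{\co T}=\co{\inter T}$ are disjoint closed subsets of $(0,1]$, so Urysohn's lemma furnishes a continuous $\phi\colon(0,1]\to[0,1]$ with $\phi=0$ on $\clos S$ and $\phi=1$ on $\co T$. The class $x:=[\phi]\in\GenKc$ then satisfies $\restr x S=0$ (trivially, since $\phi$ vanishes pointwise on $S$), so $x\in I$ by hypothesis; and $\restr x{\co T}=1$ is certainly invertible, showing $T\in\Filter(I)$.

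The main point of care is the first inclusion: proposition \ref{zero-inv-ext}.1 does not automatically deliver a \emph{closed} $T$ with $\restr x T=0$ and $S\prec T$ simultaneously, and the previous proposition's characterization of $\clos\Filter$ demands closedness, so one must make the small detour through density of $\prec$ and the observation that replacing $V$ by $\clos V$ preserves both $V\succ S$ and $V\subseteq U$. Once this closed $T$ is in hand, the remainder is the standard ``glue on $T$ and on $\co T$'' argument; the reverse direction is essentially a direct application of Urysohn.
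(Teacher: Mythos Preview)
Your proof is correct and follows essentially the same approach as the paper: both directions rely on Proposition~\ref{zero-inv-ext}.1 to extend vanishing past $S$, on Urysohn's lemma to produce witnesses, and on the explicit description of $\clos\Filter$ from the preceding proposition. The only organizational difference is that the paper first shows $\Filter^+(I)$ is $\prec$-closed and then checks $\Filter(I)\subseteq\Filter^+(I)$, whereas you verify $\clos\Filter(I)\subseteq\Filter^+(I)$ directly via the closure characterization; this lets you skip the separate Urysohn step in the forward inclusion, since the witness $yz$ for $T\in\Filter(I)$ already serves as the glue.
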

\begin{proof}
Call $\Filter^+(I):=\{S\in\ClosedCharsets:(\forall x\in \GenKc) (\restr x S =0\implies x\in I)\}$.\\
We show that $\Filter^+(I)$ is closed:\\
Let $S\in \clos{\Filter^+}(I)$, i.e. $S\in\ClosedCharsets$ and $T\in \Filter^+(I)$, for each closed $T\succ S$. Let $x\in\GenKc$ such that $\restr x S = 0$. By lemma \ref{zero-inv-ext}, there exists $T\succ S$ such that $\restr x T = 0$. W.l.o.g, $T$ is closed. Thus $x\in I$. Hence $S\in \Filter^+(I)$.\\
We now show that $\Filter(I)\subseteq \Filter^+(I)$:\\
Let $S\in\Filter(I)$. Then there exists $a\in I$ such that $\restr a {\co S} = 1$. Now let $x\in \GenKc$ such that $\restr x S = 0$. Then $\restr x T = 0$ for some $T\succ S$. By Urysohn's lemma, there exists $y\in \GenKc$ with $\restr y S = 0$ and $\restr y {\co T} = 1$. Then $\restr{(xya)} T = \restr x T = 0$ and $\restr{(xya)}{\co T} = \restr x{\co T}$. Hence $x=xya\in I$.\\
Finally, we show that $\Filter^+(I)\subseteq \clos \Filter(I)$:\\
Let $S\in\Filter^+(I)$ and let $T\succ S$ be closed. By Urysohn's lemma, there exists $y\in\GenKc$ such that $\restr y S=0$ and $\restr y {\co T} = 1$. As $S\in\Filter^+(I)$, $y\in I$. Hence $T\in \Filter(I)$.
\end{proof}

\begin{thm}\label{equal-filter-of-ideal}
If $I,J\idealproper\GenKc$, then $\Filter(I)=\Filter(J)\iff \pure(I) = \pure(J) \iff \clos I = \clos J$.
\end{thm}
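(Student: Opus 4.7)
The plan is to establish the cyclic chain of implications
$\Filter(I)=\Filter(J) \implies \pure(I)=\pure(J) \implies \clos I=\clos J \implies \Filter(I)=\Filter(J)$. The first two implications are essentially direct consequences of results already proved.

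For the first, I apply theorem \ref{pure-char} to both $I$ and $J$: if $\Filter(I)=\Filter(J)$, then $\pure(I) = I(\Filter(I)) = I(\Filter(J)) = \pure(J)$. For the second, I apply corollary \ref{clos-of-pure}: if $\pure(I)=\pure(J)$, then $\clos I = \clos{\pure(I)} = \clos{\pure(J)} = \clos J$.

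The third implication is the meaty one. The key lemma I would prove first is: for any proper ideal $I \idealproper \GenKc$, $\Filter(I) = \Filter(\clos I)$. To even state this one must know that $\clos I$ is still a proper ideal — in a topological ring the closure of an ideal is an ideal, and properness follows from lemma \ref{inv-char}.2 applied with $S=(0,1]$: $1$ is invertible, so a sharp neighborhood of $1$ consists entirely of invertibles and is thus disjoint from any proper ideal, whence $1\notin\clos I$. The inclusion $\Filter(I)\subseteq \Filter(\clos I)$ is immediate from monotonicity of $\Filter(\cdot)$ in the ideal. For $\Filter(\clos I)\subseteq \Filter(I)$, I would take $S\in\Filter(\clos I)$, witnessed by some $x\in\clos I$ with $\restr x{\co S}$ invertible; since the set $\{y\in\GenKc: \restr y{\co S} \text{ invertible}\}$ is sharply open (lemma \ref{inv-char}.2), a sharp neighborhood of $x$ lies entirely in it, and must meet $I$, so some $x'\in I$ satisfies $\restr {x'}{\co S}$ invertible, giving $S\in\Filter(I)$. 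With this lemma in hand, $\clos I = \clos J$ yields $\Filter(I)=\Filter(\clos I)=\Filter(\clos J)=\Filter(J)$ at once. The only genuine obstacle is this topological approximation argument, which is exactly where lemma \ref{inv-char}.2 and the properness of $\clos I$ are both needed; everything else in the proof is bookkeeping on the identities already established in theorem \ref{pure-char} and corollary \ref{clos-of-pure}.
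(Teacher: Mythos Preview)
Your proof is correct and follows essentially the same route as the paper: the same cyclic chain via theorem \ref{pure-char} and corollary \ref{clos-of-pure}, with the third implication reduced to $\Filter(I)=\Filter(\clos I)$ via the openness of $\{y:\restr y{\co S}\text{ invertible}\}$ from lemma \ref{inv-char}.2. You are slightly more careful than the paper in explicitly justifying that $\clos I$ is proper before invoking $\Filter(\clos I)$.
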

\begin{proof}
1. If $\Filter(I) = \Filter(J)$, then $\pure(I) = I(\Filter(I)) = I(\Filter(J)) = \pure(J)$ by theorem \ref{pure-char}.\\
2. If $\pure(I)=\pure(J)$, then $\clos I = \clos{\pure(I)} = \clos{\pure(J)} = \clos J$ by corollary \ref{clos-of-pure}.\\
3. Let $S\in \Filter(\clos I)$. Let $E:=\{x\in\GenKc: \restr x {\co S}$ is invertible$\}$. Then $\clos I\cap E\ne\emptyset$. By lemma \ref{inv-char}, $E$ is open, hence also $I\cap E\ne\emptyset$, i.e., $S\in \Filter(I)$.\\
Hence, if $\clos I=\clos J$, then $\Filter(I) = \Filter(\clos I) = \Filter(\clos J) = \Filter(J)$.
\end{proof}
\begin{cor}
If $I\idealproper\GenKc$, then $\pure(\clos I)=\pure(I)$.
\end{cor}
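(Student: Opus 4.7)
The plan is to deduce the corollary directly from Theorem \ref{equal-filter-of-ideal} applied to the pair $I$ and $J := \clos I$. The equivalence asserts $\pure(I)=\pure(J) \iff \clos I = \clos J$ for proper ideals, and since $\clos{\clos I}=\clos I$ is automatic, the conclusion $\pure(\clos I)=\pure(I)$ follows at once, provided the theorem is applicable.

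The only point that requires verification is therefore that $\clos I$ is itself a \emph{proper} ideal of $\GenKc$. First, in any topological ring, the closure of an ideal is again an ideal, so $\clos I \ideal \GenKc$. To see properness, I would invoke lemma \ref{inv-char}.2 with $S=(0,1]$: the set of invertible elements of $\GenKc$ is open in the sharp topology. Since $I$ is proper it contains no unit, i.e.\ $I$ is disjoint from this open set of units; hence $\clos I$ is disjoint from it as well, so $\clos I \idealproper \GenKc$.

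With properness in hand, Theorem \ref{equal-filter-of-ideal} gives the chain $\clos{(\clos I)} = \clos I \implies \pure(\clos I) = \pure(I)$, which is the desired equality. There is no real obstacle here; this is essentially a one-line consequence of the previous theorem, the openness of the units, and the idempotence of topological closure.
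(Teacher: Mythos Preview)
Your proposal is correct and matches the paper's intended argument: the corollary is stated immediately after Theorem~\ref{equal-filter-of-ideal} with no separate proof, so it is meant to follow directly from that theorem (indeed, step~3 of its proof already establishes $\Filter(I)=\Filter(\clos I)$). Your explicit verification that $\clos I$ is proper via the openness of the units is exactly the argument the paper later spells out in the proof of Theorem~\ref{max-ideal}.2.
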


\begin{thm}\label{equal-ideal-of-filter}
Let $\Filter_1,\Filter_2$ be a-filters on $(0,1]$. Then $I(\Filter_1)= I(\Filter_2)\iff \inter\Filter_1 = \inter\Filter_2 \iff \clos {\Filter_1} = \clos {\Filter_2}$.
\end{thm}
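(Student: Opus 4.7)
The plan is to prove the three conditions equivalent by a cycle of implications
\[I(\Filter_1) = I(\Filter_2) \implies \inter\Filter_1 = \inter\Filter_2 \implies \clos\Filter_1 = \clos\Filter_2 \implies I(\Filter_1) = I(\Filter_2),\]
closely mirroring the structure of Theorem \ref{equal-filter-of-ideal}, but using the machinery from the closure side instead of from the pure-ideal side.

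For the first implication, I would apply the operator $\Filter(\cdot)$ to both sides of $I(\Filter_1) = I(\Filter_2)$ and invoke Theorem \ref{ext-open-char}.1, which says $\Filter(I(\Filter)) = \inter\Filter$, to read off $\inter\Filter_1 = \inter\Filter_2$ directly. For the second implication, I would apply the $\prec$-closure to both sides of $\inter\Filter_1 = \inter\Filter_2$ and invoke Corollary \ref{clos-of-open}, which states that $\clos{\inter\Filter} = \clos\Filter$ for any a-filter $\Filter$.

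The substantive step is the third implication: to show that $I(\Filter)$ depends only on $\clos\Filter$. The key lemma-style observation is that $I(\Filter) = I(\clos\Filter)$ for any a-filter $\Filter$. The inclusion $I(\Filter) \subseteq I(\clos\Filter)$ is immediate since $\Filter \subseteq \clos\Filter$ and $I(\cdot)$ is monotone in its argument. For the reverse inclusion, suppose $x \in I(\clos\Filter)$, so $\restr x S = 0$ for some $S \in \clos\Filter$. By Proposition \ref{zero-inv-ext}.1 there exists $T \succ S$ (which we may take to be closed) with $\restr x T = 0$. By the previous proposition's description of $\clos\Filter$ as $\{S \in \ClosedCharsets : (\forall T \succ S, T \text{ closed})(T \in \Filter)\}$, the condition $S \in \clos\Filter$ combined with $T \succ S$ closed forces $T \in \Filter$, so $x \in I(\Filter)$. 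Once this equality $I(\Filter) = I(\clos\Filter)$ is in hand, the assumption $\clos\Filter_1 = \clos\Filter_2$ yields $I(\Filter_1) = I(\clos\Filter_1) = I(\clos\Filter_2) = I(\Filter_2)$.

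The only place that requires any actual work rather than invoking prior results is the equality $I(\Filter) = I(\clos\Filter)$ in the third implication; here the role of Proposition \ref{zero-inv-ext} is critical, because a priori $\clos\Filter$ can contain asymptotic sets that are not in $\Filter$, and one needs to be able to enlarge the vanishing set of $x$ from $S$ to some $T \succ S$ in order to apply the characterization of $\clos\Filter$.
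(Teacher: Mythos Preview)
Your proposal is correct and follows essentially the same approach as the paper: the same cycle of implications, using Theorem~\ref{ext-open-char} for the first, Corollary~\ref{clos-of-open} for the second, and proving $I(\Filter)=I(\clos\Filter)$ via Proposition~\ref{zero-inv-ext} for the third. The argument in the third step is identical to the paper's, including the use of the description of $\clos\Filter$ to pass from $S\in\clos\Filter$ and $T\succ S$ closed to $T\in\Filter$.
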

\begin{proof}
1. If $I(\Filter_1)=I(\Filter_2)$, then $\inter\Filter_1 = \Filter(I(\Filter_1)) = \Filter(I(\Filter_2)) = \inter\Filter_2$ by theorem \ref{ext-open-char}.\\
2. If $\inter\Filter_1 = \inter\Filter_2$, then $\clos{\Filter_1} = \clos{\inter\Filter_1} = \clos{\inter\Filter_2} = \clos{\Filter_2}$ by corollary \ref{clos-of-open}.\\
3. Let $x\in I(\clos\Filter)$. Then $\restr x S = 0$ for some $S\in\clos\Filter$. By proposition \ref{zero-inv-ext}, there exists $T\succ S$ (w.l.o.g.\ $T$ closed) such that $\restr x T = 0$. So $T\in\Filter$, and $x\in I(\Filter)$.\\
Hence, if $\clos{\Filter_1} = \clos{\Filter_2}$, then $I(\Filter_1) = I(\clos{\Filter_1})= I(\clos{\Filter_2}) = I(\Filter_2)$.
\end{proof}
\begin{cor}
If $\Filter$ is an a-filter on $(0,1]$, then $\inter{(\clos\Filter)}=\inter\Filter$.
\end{cor}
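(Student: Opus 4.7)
The plan is to mimic the preceding corollary (which derived $\pure(\clos I) = \pure(I)$ from Theorem \ref{equal-filter-of-ideal}) by applying Theorem \ref{equal-ideal-of-filter} in the analogous way.

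First, I would note that $\clos\Filter$ is itself an a-filter, by the proposition preceding Corollary \ref{clos-of-open}. This means it is a legitimate input to Theorem \ref{equal-ideal-of-filter}. Next, I would observe the trivial fact that $\clos{(\clos\Filter)}=\clos\Filter$: this is just the idempotence of topological closure applied to the extension topology on $\ClosedCharsets$, since $\clos\Filter$ is by definition $\prec$-closed.

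Then I would apply Theorem \ref{equal-ideal-of-filter} to the pair of a-filters $\Filter_1 := \clos\Filter$ and $\Filter_2 := \Filter$. The hypothesis $\clos{\Filter_1} = \clos{\Filter_2}$ is exactly the idempotence observation above, and the theorem's equivalence $\clos{\Filter_1}=\clos{\Filter_2} \iff \inter\Filter_1 = \inter\Filter_2$ yields $\inter{(\clos\Filter)} = \inter\Filter$, which is the claim.

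There is no real obstacle here; the whole point of the corollary is to advertise a clean consequence of Theorem \ref{equal-ideal-of-filter}. The only thing to be careful about is that the theorem is stated for a-filters, so one must remember to invoke the proposition that $\clos\Filter$ is an a-filter before citing it. The symmetry with the preceding Corollary (deriving $\pure(\clos I) = \pure(I)$ via Theorem \ref{equal-filter-of-ideal}) makes the argument essentially a one-liner.
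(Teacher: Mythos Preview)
Your proposal is correct and is exactly the intended derivation: the paper states this corollary immediately after Theorem~\ref{equal-ideal-of-filter} without separate proof, in direct parallel with the corollary $\pure(\clos I)=\pure(I)$ after Theorem~\ref{equal-filter-of-ideal}, and your application of the equivalence $\clos\Filter_1=\clos\Filter_2\iff\inter\Filter_1=\inter\Filter_2$ with $\Filter_1=\clos\Filter$, $\Filter_2=\Filter$ (using that $\clos\Filter$ is an a-filter) is precisely what is meant.
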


\section{Maximal and prime ideals and filters}
\begin{df}
An a-filter $\Filter$ on $(0,1]$ is called prime if for each $S, T\in\ClosedCharsets$ with $S\cup T\in\Filter$, either $S\in\Filter$ or $T\in\Filter$.\\
An a-filter $\Filter$ on $(0,1]$ is called pseudoprime if for each $S, T\in\ClosedCharsets$ with $\inter S\cup \inter T= (0,1]$, either $S\in\Filter$ or $T\in\Filter$.
\end{df}

\begin{rem}
1. In the definition of (pseudo)prime a-filter, we may also ask the condition for each closed $S,T\subseteq (0,1]$ (instead of for each $S,T\in\ClosedCharsets$ only). For, if $S\notin\ClosedCharsets$, then $S\notin\Charsets$, i.e., $(0,\delta]\cap S=\emptyset$ for some $\delta>0$. Hence $(S\cup T)\cap (0,\delta]\subseteq T$. So if $S\cap T\in\Filter$, then also $T\in\Filter$. The case $T\notin\ClosedCharsets$ is symmetric.

2. An a-filter $\Filter$ on $(0,1]$ is prime if and only if for each $S, T\in\ClosedCharsets$ with $S\cup T= (0,1]$, either $S\in\Filter$ or $T\in\Filter$. For, if $\Filter$ satisfies the latter condition  and $S\cup T\in\Filter$, we consider
\[U:= \{\eps\in(0,1]: d(\eps,S)\le d(\eps,T)\}\quad\text{and}\quad
V:= \{\eps\in(0,1]: d(\eps,T)\le d(\eps,S)\}.\]
Then $U$, $V$ are closed with $U\cup V=(0,1]$. Hence $U\in \Filter$ or $V\in\Filter$. If $U\in\Filter$, then also $(S\cup T)\cap U\in\Filter$. As $(S\cup T)\cap U\subseteq S$, also $S\in\Filter$. The case $V\in\Filter$ is symmetric.

This motivates our (less obvious) definition of pseudoprime a-filter.
\end{rem}

\begin{lemma}\label{prec-union}
Let $S,T,U\subseteq (0,1]$ be open and nonempty with $\clos U\subseteq S\cup T$. Then there exist $V\prec S$ and $W\prec T$ such that $U\subseteq V\cup W$.
\end{lemma}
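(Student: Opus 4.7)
My plan is to recognize this as a specialization of the classical shrinking lemma for finite open covers of a normal space. The relevant cover is $\{S,T,Z\}$ with $Z:=(0,1]\setminus\clos U$: these three sets are open and together cover $(0,1]$ precisely because $\clos U\subseteq S\cup T$. Since $(0,1]$ is metric it is normal, so each of the three sets admits an open shrinking whose closure stays inside the original; I would carry out two of those three shrinkings explicitly to produce $V$ and $W$.

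First I would shrink $S$. Let $A:=(0,1]\setminus(T\cup Z)=\clos U\setminus T$; this is closed and, because $\clos U\subseteq S\cup T$, contained in $S$, so $A$ and $\co S$ are disjoint closed subsets of the metric space $(0,1]$. Normality (concretely, the distance-function formula $V:=\{\eps\in(0,1]:d(\eps,A)<d(\eps,\co S)\}$) produces an open $V$ with $A\subseteq V$ and $\clos V\subseteq S$, so $V\prec S$. After this replacement, $\{V,T,Z\}$ still covers $(0,1]$: any point missing from $V$ is in particular missing from $A$, hence lies in $T\cup Z$. I would then repeat the same construction with $T$ in place of $S$, applied to the new uncovered set $B:=(0,1]\setminus(V\cup Z)$, which is closed and contained in $T$, obtaining an open $W$ with $\clos W\subseteq T$, i.e.\ $W\prec T$.

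At this stage $\{V,W,Z\}$ covers $(0,1]$, so $V\cup W\supseteq (0,1]\setminus Z=\clos U\supseteq U$, which is the desired containment. I do not anticipate any real obstacle: the argument is essentially a double application of the fact that disjoint closed sets in a metric space can be separated by open sets with closures inside a prescribed ambient open set. The one cosmetic subtlety is that $\prec$ has previously been defined only between elements of $\Charsets$, whereas the sets $V$ and $W$ output by the shrinking are not a priori characteristic; I would read $V\prec S$ and $W\prec T$ as the underlying geometric conditions $\clos V\subseteq\inter S$ and $\clos W\subseteq\inter T$, which is what the proof actually yields and what the intended downstream use of the lemma appears to require.
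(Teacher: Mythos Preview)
Your argument is correct: the two-step shrinking of the cover $\{S,T,(0,1]\setminus\clos U\}$ goes through exactly as you describe, and yields open $V,W$ with $\clos V\subseteq S$, $\clos W\subseteq T$, and $U\subseteq\clos U\subseteq V\cup W$. Your remark about $\prec$ being formally defined only on $\Charsets$ is well taken, but the paper itself uses the relation in the broader sense $\clos V\subseteq\inter S$ here, so this is not a defect of your proof.

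The paper's approach is different in structure: rather than shrinking sequentially, it writes down both sets in one stroke via
\[
V:=\{\eps:\max(d(\eps,U),d(\eps,\co T))\le d(\eps,\co S)\},\qquad
W:=\{\eps:\max(d(\eps,U),d(\eps,\co S))\le d(\eps,\co T)\},
\]
and checks directly that $V\subseteq S$, $W\subseteq T$ (so, being closed inside open sets, $V\prec S$ and $W\prec T$) and that any $\eps\in U$ lies in $V\cup W$ simply because one of $d(\eps,\co T)\le d(\eps,\co S)$ or the reverse must hold. The paper's version is a single symmetric formula and produces \emph{closed} $V,W$; yours is the standard shrinking-lemma iteration and produces \emph{open} $V,W$. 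Both are equally valid for the stated conclusion, and your version has the virtue of making the underlying topological principle explicit.
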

\begin{proof}
Let
\begin{align*}
V&:= \{\eps\in(0,1]: \max(d(\eps,U), d(\eps,\co T))\le d(\eps, \co S)\}\\
W&:= \{\eps\in(0,1]: \max(d(\eps,U), d(\eps,\co S))\le d(\eps, \co T)\}.
\end{align*}
If $\eps\in V\setminus S$, then $\eps\in \co T\cap \clos U\subseteq \co T\cap (S\cup T)\subseteq S$. Hence $V\subseteq S$. As $V$ is closed and $S$ is open, also $V\prec S$. Similarly $W\prec T$.\\
Further, let $\eps\in U$. Then either $d(\eps,\co T)\le d(\eps,\co S)$ (hence $\eps\in V$) or $d(\eps,\co S)\le d(\eps,\co T)$ (hence $\eps\in W$). So $U\subseteq V\cup W$.
\end{proof}

\begin{lemma}\label{pseudoprime-filter-ideal}
Let $\Filter$ be a pseudoprime a-filter on $(0,1]$. Then $I(\Filter)$ is pseudoprime.
\end{lemma}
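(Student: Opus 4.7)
The plan is to derive this directly from Lemma \ref{zero-product} together with the pseudoprimality of $\Filter$, essentially as a one-step reduction. Suppose $a,b\in\GenKc$ satisfy $ab=0$; I must show $a\in I(\Filter)$ or $b\in I(\Filter)$.

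First I would apply Lemma \ref{zero-product} with $S=(0,1]$. Since $\restr{(ab)}{(0,1]}=0$, it produces closed subsets $T,U\subseteq(0,1]$ with
\[(0,1]\subseteq \inter T\cup \inter U,\qquad \restr{a}{T}=0,\qquad \restr{b}{U}=0.\]
In particular $\inter T\cup\inter U = (0,1]$, which is exactly the hypothesis of the pseudoprime condition on $\Filter$.

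The only subtlety is that Lemma \ref{zero-product} guarantees $T,U$ are closed but not necessarily in $\ClosedCharsets$, while pseudoprimality of $\Filter$ is stated for pairs in $\ClosedCharsets$. I would handle this with the same observation used in the first part of Remark 5.2: if (say) $T\notin\ClosedCharsets$, then $T$ is closed and disjoint from $(0,\delta]$ for some $\delta>0$, so $\inter T\cap(0,\delta]=\emptyset$, hence $(0,\delta]\subseteq \inter U\subseteq U$, and since $\Filter$ is an a-filter we already get $U\in\Filter$. Otherwise both $T,U\in\ClosedCharsets$ and the pseudoprime property of $\Filter$ directly yields $T\in\Filter$ or $U\in\Filter$.

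In either alternative, either $T\in\Filter$ with $\restr{a}{T}=0$, forcing $a\in I(\Filter)$, or $U\in\Filter$ with $\restr{b}{U}=0$, forcing $b\in I(\Filter)$. This is exactly the pseudoprimality of $I(\Filter)$. There is no real obstacle here; the only point worth writing carefully is the reduction to closed (rather than characteristic) sets, since that is precisely why the definition of pseudoprime a-filter was phrased in terms of $\inter S\cup\inter T=(0,1]$ rather than in terms of a union belonging to $\Filter$.
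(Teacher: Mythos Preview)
Your proof is correct and follows the same route as the paper: apply Lemma~\ref{zero-product} with $S=(0,1]$ to obtain closed $T,U$ with $\inter T\cup\inter U=(0,1]$, then invoke pseudoprimality of $\Filter$ to get $T\in\Filter$ or $U\in\Filter$, hence $a\in I(\Filter)$ or $b\in I(\Filter)$. The only difference is presentational: you spell out the reduction from closed sets to sets in $\ClosedCharsets$, whereas the paper absorbs this into Remark~6.2(1), which already records that the pseudoprime condition may be tested against arbitrary closed $S,T$.
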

\begin{proof}
Let $xy=0$. By lemma \ref{zero-product}, there exist closed $T,U$ with $\inter T\cup\inter U = (0,1]$ such that $\restr x T = 0$ and $\restr y U = 0$. As $\Filter$ is pseudoprime, $T\in\Filter$ or $U\in\Filter$. Hence $x\in I(\Filter)$ or $y\in I(\Filter)$.
\end{proof}

\begin{lemma}\label{pseudoprime-ideal-filter}
Let $I\idealproper\GenKc$ be pseudoprime. Then $\Filter(I)$ is pseudoprime.
\end{lemma}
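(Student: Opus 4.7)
The plan is to dualize pseudoprimeness of $I$ into pseudoprimeness of $\Filter(I)$ by producing, for any $S,T\in\ClosedCharsets$ with $\inter S\cup\inter T=(0,1]$, two concrete elements $a,b\in\GenKc$ with $ab=0$ such that $a\in I$ forces $S\in\Filter(I)$ and $b\in I$ forces $T\in\Filter(I)$. Pseudoprimeness of $I$ then selects one of the two options and finishes the argument.

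First I would make the topological observation that $\inter S\cup\inter T=(0,1]$ is equivalent, via De Morgan (using $\co{\inter X}=\clos{\co X}$ in $(0,1]$), to $\clos{\co S}\cap\clos{\co T}=\emptyset$. Since $(0,1]$ is a metric space, hence normal, Urysohn's lemma produces a continuous $\phi:(0,1]\to[0,1]$ with $\phi=1$ on $\clos{\co S}$ and $\phi=0$ on $\clos{\co T}$.

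Next I would define $a_\eps:=\max(0,2\phi(\eps)-1)$ and $b_\eps:=\max(0,1-2\phi(\eps))$. Both are continuous $[0,1]$-valued functions on $(0,1]$, so $a:=[a_\eps]$, $b:=[b_\eps]\in\GenKc$. For each $\eps\in(0,1]$ at most one of $a_\eps,b_\eps$ is nonzero (depending on whether $\phi(\eps)$ is above or below $1/2$), so $a_\eps b_\eps=0$ identically and thus $ab=0$ in $\GenKc$. Moreover $\phi=1$ on $\co S$ gives $a_\eps=1$ on $\co S$, and $\phi=0$ on $\co T$ gives $b_\eps=1$ on $\co T$; in particular $\restr{a}{\co S}$ and $\restr{b}{\co T}$ are invertible (with inverse $1$).

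Finally, applying the hypothesis that $I$ is pseudoprime to the identity $ab=0$ yields $a\in I$ or $b\in I$, and in the two cases the definition of $\Filter(I)$ directly gives $S\in\Filter(I)$ or $T\in\Filter(I)$ respectively. This establishes pseudoprimeness of $\Filter(I)$. No step is genuinely difficult; the only delicate point is the initial dualization $\inter S\cup\inter T=(0,1]\Longleftrightarrow\clos{\co S}\cap\clos{\co T}=\emptyset$, which makes Urysohn's lemma applicable and gives the clean pointwise orthogonality $a_\eps b_\eps\equiv 0$ used to conclude $ab=0$ in $\GenKc$.
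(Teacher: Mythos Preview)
Your proof is correct and follows the same overall strategy as the paper: produce $a,b\in\GenKc$ with $ab=0$, $\restr a{\co S}=1$, $\restr b{\co T}=1$, then apply pseudoprimeness of $I$. The only difference lies in how the pair $(a,b)$ is constructed. The paper first invokes Lemma~\ref{prec-union} (with $U=(0,1]$) to obtain closed sets $V\prec\inter S$, $W\prec\inter T$ with $V\cup W=(0,1]$, and then applies Urysohn twice to get $x,y$ with $\restr x V=0$, $\restr x{\co S}=1$ and $\restr y W=0$, $\restr y{\co T}=1$; the covering $V\cup W=(0,1]$ forces $xy=0$. You instead apply Urysohn once to the disjoint closed sets $\clos{\co S}$ and $\clos{\co T}$ and then truncate the resulting function at the level $1/2$ from each side, which yields the pointwise orthogonality $a_\eps b_\eps\equiv 0$ directly. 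Your route is slightly more self-contained in that it bypasses Lemma~\ref{prec-union}, while the paper's route reuses machinery already set up for other purposes; substantively the arguments are the same.
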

\begin{proof}
Let $S, T\in\ClosedCharsets$ with $\inter S\cup\inter T = (0,1]$. Let $V\prec\inter S$ and $W\prec\inter T$ such that $V\cup W=(0,1]$ (lemma \ref{prec-union} with $U=(0,1]$). By Urysohn's lemma, there exist $x,y\in \GenKc$ such that $\restr x V=0$, $\restr x {\co S} = 1$, $\restr y W = 0$ and $\restr y {\co T} = 1$. Then $xy=0$. As $I$ is pseudoprime, $x\in I$ or $y\in I$. Hence $S\in \Filter(I)$ or $T\in \Filter(I)$.
\end{proof}

\begin{lemma}\label{closed-is-radical}
Every closed ideal $I\idealproper \GenKc$ is radical.
\end{lemma}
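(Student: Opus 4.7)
The plan is to exploit the explicit filter-theoretic description of closed ideals already established in the paper. Since $I$ is closed, Corollary \ref{clos-of-pure} together with $\pure(I)\subseteq I$ gives
\[
I = \clos I = \clos{I(\Filter(I))},
\]
and by Theorem \ref{clos-ideal-char} this latter set admits the explicit description
\[
I = \{x\in\GenKc : (\forall S\in\ClosedCharsets)(\restr x{\co S} \text{ invertible} \implies S\in\Filter(I))\}.
\]

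With this characterization in hand, suppose $x^2\in I$; I will verify the membership condition for $x$. Fix any $S\in\ClosedCharsets$ such that $\restr x{\co S}$ is invertible. By Lemma \ref{inv-char}.1(c) there exist $n\in\N$ and $\delta>0$ with $\abs{x_\eps}\ge\eps^n$ for every $\eps\in\co S\cap(0,\delta)$; squaring gives $\abs{x_\eps^2}\ge\eps^{2n}$ on the same set, so by Lemma \ref{inv-char}.1(c) again $\restr{x^2}{\co S}$ is invertible. (Equivalently, if $y$ realizes $\restr{(xy)}{\co S}=1$, then $y^2$ realizes $\restr{(x^2y^2)}{\co S}=1$.) Since $x^2\in I$, the characterization applied to $x^2$ forces $S\in\Filter(I)$. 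Hence $x\in I$, and $I$ is radical.

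The only substantive input is recognizing that closedness puts $I$ in the form furnished by Theorem \ref{clos-ideal-char}; after that, the argument reduces to the trivial remark that invertibility on $\co S$ is preserved under squaring. Consequently there is no real obstacle, and the proof is essentially a single line once the setup is laid out.
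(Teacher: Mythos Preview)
Your argument is correct. Both your proof and the paper's hinge on the same trivial observation---that invertibility of $\restr{x}{\co S}$ implies invertibility of $\restr{x^n}{\co S}$---but they package it through different results. The paper shows directly that $\Filter(\rad I)=\Filter(I)$ (since $x^n\in I$ with $\restr{x}{\co S}=1$ gives $\restr{x^n}{\co S}=1$), and then invokes Theorem~\ref{equal-filter-of-ideal} to conclude $\clos{\rad I}=\clos I=I$, whence $I\subseteq\rad I\subseteq I$. You instead pass through the explicit description of $\clos I(\Filter(I))$ in Theorem~\ref{clos-ideal-char} and verify the membership condition for $x$ elementwise. The paper's route is marginally slicker in that it avoids unpacking the characterization of $\clos I(\Filter)$, while yours is perhaps more transparent because it works directly with the defining condition; but the content is the same.
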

\begin{proof}
Let $S\in\Filter(\rad I)$. Then there exists $x\in\GenKc$ and $n\in\N$ with $x^n\in I$ and $\restr x {\co S}= 1$. Then also $\restr {x^n} {\co S}= 1$, hence $S\in\Filter(I)$. Thus $\Filter(\rad I)=\Filter(I)$. By theorem \ref{equal-filter-of-ideal}, $I \subseteq \rad I \subseteq \clos{\rad I} =\clos I = I$.
\end{proof}

\begin{prop}\label{pseudoprime-ideal-char}
Let $I\idealproper\GenKc$. Then the following are equivalent:
\begin{enumerate}
\item $I$ is pseudoprime
\item the set of ideals containing $I$ is totally ordered (for $\subseteq$)
\item $I$ is irreducible
\item $\rad I$ is prime
\item $\Filter(I)$ is pseudoprime.
\end{enumerate}
For $\GenKc=\GenRc$, this is still equivalent with
\begin{enumerate}
\item[6.] $\GenRc/I$ is totally ordered.
\end{enumerate}
\end{prop}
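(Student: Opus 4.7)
The plan is to prove $(1)\Leftrightarrow(5)$ directly from lemmas \ref{pseudoprime-ideal-filter} and \ref{pseudoprime-filter-ideal}, establish the cycle $(1)\Rightarrow(2)\Rightarrow(3)\Rightarrow(1)$, and insert $(4)$ into the picture via the chain $(1)\Rightarrow(4)\Rightarrow(5)$; condition $(6)$ in the real case will be handled by a short direct argument at the end. Several steps reduce to $\GenRc$ via the bijective correspondence between ideals in $\GenRc$ and $\GenCc$ together with the absolute order convexity of all ideals.

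For $(1)\Rightarrow(2)$, I will use an $l$-ring trick in $\GenRc$: if $a\in J\setminus K$ and $b\in K\setminus J$ for ideals $J,K\supseteq I$, set $u:=(|a|-|b|)^+$ and $v:=(|b|-|a|)^+$, so $uv=0$. Pseudoprimeness forces $u\in I$ or $v\in I$, and the bound $|a|\le|b|+u$ combined with absolute order convexity of $K$ yields $a\in K$ in the first case (and $b\in J$ in the second), contradicting incomparability. The transfer to $\GenCc$ is automatic via the correspondence. The implication $(2)\Rightarrow(3)$ is immediate.

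For $(3)\Rightarrow(1)$, which I expect to be the main obstacle, I argue contrapositively. Given $ab=0$ with $a,b\notin I$, lemma \ref{zero-product} produces closed $T,U$ with $\inter T\cup\inter U=(0,1]$, $\restr a T=0$ and $\restr b U=0$. The sets $J:=\{c\in\GenKc:\restr c T=0\}$ and $K:=\{c\in\GenKc:\restr c U=0\}$ are ideals containing $a,b$ respectively, so $I+J$ and $I+K$ strictly contain $I$. The technical heart is to show $(I+J)\cap(I+K)=I$: if $c=i_1+j=i_2+k$ lies in the intersection, then $\restr c T=\restr{i_1}T$ and $\restr c U=\restr{i_2}U$, so $\restr{i_1}{T\cap U}=\restr{i_2}{T\cap U}$. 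Urysohn's lemma for the disjoint closed sets $\co{\inter U}\subseteq\inter T$ and $\co{\inter T}\subseteq\inter U$ produces a cut-off $\phi$, and the element $i:=\phi i_1+(1-\phi)i_2\in I$ agrees with $c$ modulo negligibles on each piece of the covering $(0,1]=\co{\inter U}\cup\co{\inter T}\cup(\inter T\cap\inter U)$, giving $c=i\in I$ and contradicting irreducibility.

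For the remaining implications: $(5)\Rightarrow(1)$ follows because applying lemma \ref{pseudoprime-filter-ideal} to $\Filter(I)$ shows $\pure(I)=I(\Filter(I))$ is pseudoprime, and this passes to $I\supseteq\pure(I)$. For $(1)\Rightarrow(4)$, $\rad I$ is trivially pseudoprime and radical; I will verify in $\GenRc$ that radical $+$ pseudoprime $\Rightarrow$ prime by the same $u,v$ trick (from $ab\in J$ and $u\in J$, the bound $|a|^2\le|a||b|+|a|u\in J$ with $J$ radical yields $a\in J$), transferring to $\GenCc$. For $(4)\Rightarrow(5)$, the $x,y$ built in lemma \ref{pseudoprime-ideal-filter} satisfy $xy=0\in\rad I$, so primality of $\rad I$ puts $x^n$ (resp.\ $y^n$) in $I$; since $\restr{x^n}{\co S}$ (resp.\ $\restr{y^n}{\co T}$) remains invertible, the corresponding set lies in $\Filter(I)$. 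Finally $(6)$: pseudoprimeness applied to $x^+x^-=0$ in $\GenRc$ yields total ordering of $\GenRc/I$; conversely, given $ab=0$, the pointwise identity $|a|=(|a|-|b|)^+$ in $\GenRc$ (valid since $|a||b|=0$ and $\min(|a|,|b|)^2\le|a||b|=0$) converts whichever of $(|a|-|b|)^+$ or $(|b|-|a|)^+$ lies in $I$ into $a\in I$ or $b\in I$.
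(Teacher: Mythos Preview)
Your proposal is correct. The overall logical structure ($(1)\Leftrightarrow(5)$, the cycle $(1)\Rightarrow(2)\Rightarrow(3)\Rightarrow(1)$, inserting $(4)$, and handling $(6)$ separately) is sound, and each step checks out; in particular your partition-of-unity argument for $(3)\Rightarrow(1)$ works because on the cover $(0,1]=\co{\inter U}\cup\co{\inter T}\cup(\inter T\cap\inter U)$ the convex combination $i=\phi i_1+(1-\phi)i_2$ satisfies $|c_\eps-i_\eps|\le\phi(\eps)|c_\eps-i_{1,\eps}|+(1-\phi(\eps))|c_\eps-i_{2,\eps}|$, which is controlled on each piece, so $(I+J)\cap(I+K)=I$ and irreducibility fails.

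However, your route differs from the paper's in several places. For $(3)\Rightarrow(1)$ the paper invokes the general $l$-ring fact \cite[Prop.~8.4.1]{BKW} that irreducibility of an $l$-ideal is equivalent to ``$x\GenRc\cap y\GenRc\subseteq I\Rightarrow x\in I$ or $y\in I$'', and then shows that $xy=0$ forces $x\GenRc\cap y\GenRc=\{0\}$; you instead build the two larger ideals $I+J$, $I+K$ by hand and glue with Urysohn. For $(1)\Rightarrow(2)$ the paper passes through $(6)$ (using $(a-|a|)(a+|a|)=0$) and then quotes \cite[4.1]{GK60}; your $(|a|-|b|)^+(|b|-|a|)^+=0$ argument is a direct shortcut. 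For $(4)$ the paper derives it from $(2)$ via ``intersection of a chain of primes is prime'', whereas you prove radical $+$ pseudoprime $\Rightarrow$ prime directly (this is essentially the content of the paper's subsequent Theorem~\ref{prime-ideal-char}, which the paper instead deduces \emph{from} the present proposition). For $(4)\Rightarrow(5)$ the paper observes $\Filter(I)=\Filter(\rad I)$ (from the proof of Lemma~\ref{closed-is-radical}), while you re-run the construction of Lemma~\ref{pseudoprime-ideal-filter} with $x^n$ in place of $x$. Your approach is more self-contained and avoids the external citations to \cite{BKW} and \cite{GK60}; the paper's is shorter by leaning on known structure theory of $l$-rings.
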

\begin{proof}
$1\implies 6$ (for $\GenKc=\GenRc$): let $a\in\GenRc$. Since $a^2=\abs{a}^2$, we have $(a-\abs a)(a+ \abs a)=0$. As $I$ is pseudoprime, $a-\abs a\in I$ or $a+\abs a\in I$. As $\GenRc$ is an $l$-ring, it follows that $a+I\ge 0$ or $-a+I\ge 0$ in $\GenRc/I$ (cf.~\cite[Thm.\ 5.3]{GJ}).\\
$6\implies 2$ (for $\GenKc=\GenRc$, cf.\ \cite[4.1]{GK60}): the map $J\mapsto J/I$ is an order preserving bijection between the ($l$-)ideals of $\GenRc$ containing $I$ and the $l$-ideals of $\GenRc/I$. As in any totally ordered ring, the $l$-ideals in $\GenRc/I$ are totally ordered.\\
$1\implies 2$ (for $\GenKc=\GenCc$): by the bijective correspondence of ideals in $\GenRc$ and in $\GenCc$ (section \ref{sec-prelim}).\\
$2\implies 3$: let $K=I\cap J$. Either $I\subseteq J$ or $J\subseteq I$, whence $K= I$ or $K=J$.\\
$3\implies 1$: as in any commutative $l$-ring with $1$ in which every ideal is an $l$-ideal, the irreducibility of $I\ideal\GenRc$ is equivalent with: for any $x,y\in\GenRc$, $x\GenRc\cap y\GenRc\subseteq I$ implies $x\in I$ or $y\in I$ \cite[Prop.\ 8.4.1]{BKW}.
So let $x,y\in\GenRc$ with $xy=0$. By lemma \ref{zero-product}, there exist open $T,U$ with $T\cup U = (0,1]$ such that $\restr x T = 0$ and $\restr y U = 0$. Let $z\in x\GenRc\cap y\GenRc$. Then $\restr z T= \restr z U = 0$, hence $z=0$. In particular, $x\GenRc\cap y\GenRc\subseteq I$, and hence $x\in I$ or $y\in I$. The bijective correspondence of ideals in $\GenRc$ and $\GenCc$ yields the result for $\GenCc$.\\
$2\implies 4$: the intersection of a chain of prime ideals is prime, hence $\rad I = \bigcap_{I\subseteq P, P \text{ prime}} P$ is prime.\\
$4\implies 5$: by lemma \ref{pseudoprime-ideal-filter}, $\Filter(\rad I)$ is pseudoprime. By the proof of lemma \ref{closed-is-radical}, $\Filter(I)=\Filter(\rad I)$.\\
$5\implies 1$: by lemma \ref{pseudoprime-filter-ideal}, $\pure(I)= I(\Filter(I))$ is pseudoprime. Hence $I\supseteq I(\Filter(I))$ is also pseudoprime.
\end{proof}

\begin{thm}\label{prime-ideal-char}
Let $I\idealproper\GenKc$. Then $I$ is prime iff $I$ is pseudoprime and radical.
\end{thm}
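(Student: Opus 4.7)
The plan is to prove both directions, each of which is short given the machinery already in place, especially Proposition~\ref{pseudoprime-ideal-char}.

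For the forward direction, assume $I$ is prime. Then $I$ is pseudoprime because $ab=0\in I$ (since $0\in I$) forces $a\in I$ or $b\in I$ by the definition of primeness. To see that $I$ is radical, suppose $x^n\in I$; I would argue by induction on $n$ that $x\in I$. The base case $n=1$ is trivial, and for the step, $x\cdot x^{n-1}\in I$ yields $x\in I$ or $x^{n-1}\in I$ by primeness, and the latter gives $x\in I$ by the induction hypothesis. Hence $\rad I\subseteq I$, so $I=\rad I$.

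For the reverse direction, assume $I$ is pseudoprime and radical. By the equivalence $1\Leftrightarrow 4$ in Proposition~\ref{pseudoprime-ideal-char}, $\rad I$ is prime. But $I=\rad I$ by the radicality assumption, so $I$ is prime.

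There is no real obstacle here: the two nontrivial pieces (the characterization of pseudoprimeness in terms of $\rad I$ being prime, and the reduction of everything to the $l$-ring structure) are already encapsulated in Proposition~\ref{pseudoprime-ideal-char}. The only thing to be slightly careful about is that the forward inductive argument does not accidentally use anything stronger than the ring-theoretic definition of primeness, which it does not.
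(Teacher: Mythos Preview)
Your proof is correct and follows essentially the same approach as the paper. The only cosmetic difference is in the forward direction: you verify radicalness by a direct induction on~$n$, whereas the paper observes $\rad I=\bigcap_{I\subseteq P,\,P\text{ prime}}P=I$ (since $I$ itself is among the primes in the intersection); both are standard one-line arguments, and the reverse direction via Proposition~\ref{pseudoprime-ideal-char} is identical.
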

\begin{proof}
$\Rightarrow$: as $I$ is prime, $\rad I = \bigcap_{I\subseteq P,P \text{ prime}} P = I$.\\
$\Leftarrow$: $I=\rad I$ is prime by proposition \ref{pseudoprime-ideal-char}.
\end{proof}

\begin{lemma}\label{pure-is-radical}
Every pure ideal $I\idealproper\GenKc$ is radical.
\end{lemma}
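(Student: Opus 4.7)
The plan is to reduce the statement to proving that $I(\Filter)$ is radical for every a\nobreakdash-filter $\Filter$, and then to verify this by a short asymptotic computation.

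By theorem \ref{pure-char}, any proper pure ideal $I\idealproper\GenKc$ has the form $I=I(\Filter)$ for some a\nobreakdash-filter $\Filter$ on $(0,1]$ (indeed, $I=\pure(I)=I(\Filter(I))$). Since ``radical'' is characterised by the implication $x^2\in I\implies x\in I$, it therefore suffices to show that for any a\nobreakdash-filter $\Filter$ and any $x\in\GenKc$, the condition $x^2\in I(\Filter)$ forces $x\in I(\Filter)$.

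So let $x=[x_\eps]\in\GenKc$ with $x^2\in I(\Filter)$. By the definition of $I(\Filter)$, there exists $S\in\Filter$ such that $\restr{x^2}{S}=0$. Unfolding the definition of vanishing on $S$: for every $m\in\N$ there is $\delta_m>0$ with $|x_\eps|^2=|x_\eps^2|\le\eps^m$ for all $\eps\in S\cap(0,\delta_m)$, and therefore $|x_\eps|\le\eps^{m/2}$ on $S\cap(0,\delta_m)$. Given any $n\in\N$, choose $m:=2n$; then $|x_\eps|\le\eps^{n}$ on $S\cap(0,\delta_{2n})$. This is exactly the statement $\restr{x}{S}=0$, so $x\in I(\Filter)$, as required.

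There is no real obstacle here: the only thing to be careful about is the asymptotic bookkeeping, and the key observation is that the rapid-decay condition defining $\restr{\cdot}{S}=0$ is preserved under taking $n$-th roots, because the quantifier ``$\forall m\in\N$'' lets us absorb the factor of $n$ by passing from $m$ to $mn$. In particular, exactly the same argument (using $m:=nk$) shows that $x^n\in I(\Filter)$ implies $x\in I(\Filter)$ for every $n\ge 1$, which is the form corresponding to $\rad{I(\Filter)}=I(\Filter)$.
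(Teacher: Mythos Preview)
Your proof is correct and follows essentially the same approach as the paper: reduce to $I=I(\Filter(I))$ via theorem \ref{pure-char}, then observe that $\restr{x^n}{S}=0$ implies $\restr{x}{S}=0$. The only difference is that you spell out the asymptotic estimate in detail, whereas the paper simply asserts this implication.
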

\begin{proof}
Let $x^n\in I$ for some $x\in\GenKc$ and $n\in\N$. As $I=\pure(I)=I(\Filter(I))$, there exists $S\in\Filter(I)$ such that $\restr {x^n} S = 0$. Hence also $\restr{x} S = 0$, and $x\in I(\Filter(I)) = I$.
\end{proof}

\begin{prop}
For $I\idealproper \GenKc$, the following are equivalent:
\begin{enumerate}
\item $I$ is pseudoprime
\item $\pure(I)$ is prime
\item $I$ contains a prime ideal.
\end{enumerate}
\end{prop}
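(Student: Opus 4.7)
My plan is to prove the implications $1 \Rightarrow 2 \Rightarrow 3 \Rightarrow 1$, all of which should follow rather directly from the machinery already built up in the paper; there is no serious obstacle, just a matter of chaining the right lemmas.

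For $3 \Rightarrow 1$, I note that this is essentially a general ring-theoretic fact. If $P \subseteq I$ is prime and $ab = 0$, then $ab \in P$, whence $a \in P$ or $b \in P$, and in either case $a \in I$ or $b \in I$. So $I$ is pseudoprime.

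For $2 \Rightarrow 3$, this is immediate: $\pure(I) \subseteq I$ by definition of the pure part, so if $\pure(I)$ is prime it is a prime ideal contained in $I$.

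The main step is $1 \Rightarrow 2$. Assume $I$ is pseudoprime. By Proposition \ref{pseudoprime-ideal-char} (1 $\iff$ 5), the a-filter $\Filter(I)$ is pseudoprime. Applying Lemma \ref{pseudoprime-filter-ideal} to $\Filter = \Filter(I)$, the ideal $I(\Filter(I))$ is pseudoprime; but by Theorem \ref{pure-char} this is exactly $\pure(I)$, so $\pure(I)$ is pseudoprime. By Lemma \ref{pure-is-radical}, $\pure(I)$ is also radical. Finally, Theorem \ref{prime-ideal-char} (prime $\iff$ pseudoprime and radical) gives that $\pure(I)$ is prime, completing the cycle.
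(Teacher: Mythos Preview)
Your proof is correct and follows essentially the same route as the paper: for $1\Rightarrow 2$ you pass from pseudoprimeness of $I$ to pseudoprimeness of $\Filter(I)$ (the paper cites Lemma~\ref{pseudoprime-ideal-filter} directly rather than the packaged equivalence in Proposition~\ref{pseudoprime-ideal-char}), then to pseudoprimeness of $I(\Filter(I))=\pure(I)$ via Lemma~\ref{pseudoprime-filter-ideal}, and conclude via Lemma~\ref{pure-is-radical} and Theorem~\ref{prime-ideal-char}; the implications $2\Rightarrow 3$ and $3\Rightarrow 1$ are identical to the paper's.
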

\begin{proof}
$1\implies 2$: by lemmas \ref{pseudoprime-filter-ideal} and \ref{pseudoprime-ideal-filter}, $\pure(I)=I(\Filter(I))$ is pseudoprime. By lemma \ref{pure-is-radical}, $\pure(I)$ is radical. Hence $\pure(I)$ is prime.\\
$2\implies 3$: $\pure(I)\subseteq I$.\\
$3\implies 1$: if $P\subseteq I$ is prime and $xy=0$, then $xy\in P$, so $x\in P\subseteq I$ or $y\in P\subseteq I$.
\end{proof}

\begin{prop}\label{pseudoprime-filter-kar}
Let $\Filter$ be an a-filter on $(0,1]$. Then the following are equivalent:
\begin{enumerate}
\item $\Filter$ is pseudoprime
\item $I(\Filter)$ is pseudoprime
\item $I(\Filter)$ is prime.
\end{enumerate}
\end{prop}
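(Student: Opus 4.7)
The plan is to prove the cycle $1\Rightarrow 2\Rightarrow 3\Rightarrow 2\Rightarrow 1$, where $3\Rightarrow 2$ is trivial (a prime ideal is automatically pseudoprime, since $ab=0\in I$ forces $a\in I$ or $b\in I$). The genuine content is split between the two implications $1\Leftrightarrow 2$ (transferring pseudoprimeness between a filter and its associated ideal) and the upgrade $2\Rightarrow 3$ (from pseudoprimeness to primeness for ideals of the form $I(\Filter)$). Both of these follow from machinery already assembled in the paper.

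For $1\Rightarrow 2$, I would cite lemma \ref{pseudoprime-filter-ideal} directly: that lemma asserts precisely that $I(\Filter)$ is pseudoprime whenever $\Filter$ is pseudoprime. For $2\Rightarrow 3$, the idea is to exploit that $I(\Filter)$ is always a pure ideal (theorem \ref{pure-char}), so $\pure(I(\Filter))=I(\Filter)$. The preceding proposition characterises pseudoprimeness of an ideal $I$ by the primeness of $\pure(I)$, so if $I(\Filter)$ is pseudoprime, then $\pure(I(\Filter))=I(\Filter)$ is prime, which is exactly condition $3$.

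The only implication requiring a small extra argument is $2\Rightarrow 1$. Here the plan is to apply lemma \ref{pseudoprime-ideal-filter} to $I=I(\Filter)$, which yields that $\Filter(I(\Filter))$ is pseudoprime. By theorem \ref{ext-open-char}, $\Filter(I(\Filter))=\inter\Filter$, so $\inter\Filter$ is pseudoprime. To conclude that $\Filter$ itself is pseudoprime, take $S,T\in\ClosedCharsets$ with $\inter S\cup\inter T=(0,1]$; since $S$ and $T$ already lie in $\ClosedCharsets$, pseudoprimeness of $\inter\Filter$ applies and forces $S\in\inter\Filter$ or $T\in\inter\Filter$, whence $S\in\Filter$ or $T\in\Filter$ because $\inter\Filter\subseteq\Filter$. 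The mild obstacle here is conceptual rather than technical: one must notice that the defining clause of pseudoprimeness for $\inter\Filter$ takes the same inputs $(S,T)$ as for $\Filter$, so membership in the $\prec$-interior immediately implies membership in $\Filter$ without any further shrinking argument.
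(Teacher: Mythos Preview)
Your proof is correct and follows essentially the same route as the paper. The paper proves the cycle $1\Rightarrow 2\Rightarrow 3\Rightarrow 1$: for $2\Rightarrow 3$ it argues directly that $I(\Filter)$ is pure hence radical (lemma~\ref{pure-is-radical}), so pseudoprime plus radical gives prime (theorem~\ref{prime-ideal-char}), while you route through the preceding proposition via $\pure(I(\Filter))=I(\Filter)$; and for the return to~1 the paper simply notes $\Filter(I(\Filter))\subseteq\Filter$ (proposition~\ref{ideal-filter-eltary}) rather than identifying it as $\inter\Filter$, but the inference is the same.
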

\begin{proof}
$1\implies 2$: by lemma \ref{pseudoprime-filter-ideal}, $I(\Filter)$ is pseudoprime.\\
$2\implies 3$: as $I(\Filter)$ is pure, $I(\Filter)$ is radical (lemma \ref{pure-is-radical}). By theorem \ref{prime-ideal-char}, $I(\Filter)$ is prime.\\
$3\implies 1$: by lemma \ref{pseudoprime-ideal-filter}, $\Filter(I(\Filter))$ is pseudoprime. As $\Filter(I(\Filter))\subseteq \Filter$, also $\Filter$ is pseudoprime.
\end{proof}

We now consider maximal ideals and a-filters:
\begin{thm}\label{max-filter}
Let $\Filter$ be an a-filter.
\begin{enumerate}
\item if $\Filter$ is pseudoprime, then $\clos\Filter$ is maximal.
\item $\Filter$ is maximal if and only if $\Filter$ is prime and $\prec$-closed.
\end{enumerate}
\end{thm}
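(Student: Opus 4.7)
The plan is to establish part 1 first, since part 2 falls out of it easily.

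For part 1, I assume $\Filter$ is pseudoprime and show that $\clos\Filter$ admits no strict a-filter extension. Let $\Filter'\supseteq\clos\Filter$ be an a-filter; given $S\in\Filter'$ and an arbitrary closed $T\succ S$, by the characterization of $\clos\Filter$ it suffices to show $T\in\Filter$. The key construction: using density of $\prec$, pick closed $U$ with $S\prec U\prec T$, then apply Urysohn's lemma to the disjoint closed sets $U$ and $\co{\inter T}$ to produce a continuous $\phi:(0,1]\to[0,1]$ with $\phi=0$ on $U$ and $\phi=1$ on $\co{\inter T}$. Set $A:=\{\phi\le 2/3\}$ and $B:=\{\phi\ge 1/3\}$. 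Both are closed, $\inter A\cup\inter B=(0,1]$ because $\{\phi<2/3\}\cup\{\phi>1/3\}=(0,1]$, and by construction $A\subseteq\inter T$ while $B\subseteq\co U$, so in particular $B\cap S=\emptyset$. By pseudoprimeness of $\Filter$ (using the variant for arbitrary closed sets noted in the remark), either $A\in\Filter$ or $B\in\Filter$. If $A\in\Filter$, then $T\in\Filter$ by upward closure, as desired. If instead $B\in\Filter\subseteq\Filter'$, then together with $S\in\Filter'$ this gives $\emptyset=B\cap S\in\Filter'$, contradicting that $\Filter'$ is a filter.

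For part 2, the $(\Leftarrow)$ direction goes: a prime a-filter is automatically pseudoprime, since $\inter S\cup\inter T=(0,1]$ forces $S\cup T=(0,1]\in\Filter$ and primeness then applies; by part 1, $\clos\Filter$ is maximal, and $\prec$-closedness of $\Filter$ gives $\clos\Filter=\Filter$. For the $(\Rightarrow)$ direction, $\prec$-closedness of a maximal $\Filter$ is immediate, since $\clos\Filter$ is always an a-filter containing $\Filter$. For primeness, I plan a standard filter-extension argument: assume $S\cup T\in\Filter$ with $S\notin\Filter$, and consider $\Filter':=\{V\subseteq(0,1]\text{ closed}:(\exists U\in\Filter)(V\supseteq U\cap S)\}$. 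This is routinely closed under finite intersection and supersets, contains each $(0,\delta]$, and contains $S$. If $\emptyset\notin\Filter'$, then $\Filter'$ is a proper a-filter strictly extending $\Filter$, contradicting maximality. Hence $U\cap S=\emptyset$ for some $U\in\Filter$, whence $U\cap T=U\cap(S\cup T)\in\Filter$ forces $T\in\Filter$.

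The main obstacle is the construction in part 1: pseudoprimeness demands a pair of closed sets whose \emph{interiors} already cover $(0,1]$, not merely whose union does, and one needs this pair positioned so that one set sits inside $T$ while the other misses $S$. Funnelling Urysohn's lemma through a single $[0,1]$-valued $\phi$ interpolating between $U$ and $\co{\inter T}$ and slicing at the two levels $1/3$ and $2/3$ yields both sets at once, with all three required properties.
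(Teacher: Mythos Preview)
Your proof is correct and follows essentially the same strategy as the paper's. The only notable difference is in part~1: after interpolating $S\prec V\prec T$ with $V$ open, the paper applies pseudoprimeness directly to the pair $(T,\co V)$ (since $\clos V\subseteq\inter T$ gives $\inter T\cup\inter{(\co V)}=(0,1]$), which avoids your Urysohn-and-level-set construction of $(A,B)$ entirely.
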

\begin{proof}
1. Suppose $\clos\Filter\subsetneq \Filter'$ for some a-filter $\Filter'$. Let $S\in\Filter'\setminus \clos\Filter$. Then there exists a closed $T\succ S$ such that $T\notin \Filter$. As $\prec$ is a dense order, there exists an open $V$ with $S\prec V\prec T$. Since $\inter T\cup \inter{(\co V)} = (0,1]$ and $\Filter$ is pseudoprime, $\co V\in\Filter$. But then $\emptyset = S\cap \co V\in\Filter'$, a contradiction.

2. $\Rightarrow$: we show that $\Filter$ is closed: as $\Filter\subseteq\clos\Filter$, and $\clos\Filter$ is an a-filter, $\Filter=\clos\Filter$ by maximality. Further, we show that $\Filter$ is prime: let $S,T\in\ClosedCharsets$ such that $S\cup T\in\Filter$. Suppose there exists $U\in\Filter$ such that $U\cap S = \emptyset$ and there exists $V\in\Filter$ such that $V\cap T=\emptyset$. Then $\emptyset = (U\cap V)\cap (S\cup T) \in\Filter$, a contradiction. We may thus assume that $U\cap S\ne\emptyset$, for each $U\in\Filter$. (The case $U\cap T\ne\emptyset$, for each $U\in\Filter$ is similar.) Then $\emptyset\notin\Filter':=\{U\subseteq(0,1]$ closed$: (\exists V\in\Filter) (S\cap V\subseteq U)\}$. As $\Filter'$ is an a-filter, $\Filter=\Filter'$ by maximality. Hence $S\in\Filter$.

2. $\Leftarrow$: by part 1, $\Filter = \clos\Filter$ is maximal.
\end{proof}

\begin{thm}\label{max-ideal}
Let $I\idealproper\GenKc$.
\begin{enumerate}
\item if $I$ is pseudoprime, then $\clos I$ is maximal.
\item $I$ is maximal if and only if $I$ is prime and closed.
\end{enumerate}
\end{thm}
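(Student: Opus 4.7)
The plan is to transport Theorem \ref{max-filter}, which handles maximality on the filter side, to the ideal side using the equivalence theorems \ref{equal-filter-of-ideal} and \ref{equal-ideal-of-filter}. Part 2 will then fall out of part 1, so I would treat part 1 as the real work.

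For part 1, I would assume $I$ is pseudoprime and, for a contradiction, suppose there is a proper ideal $J\supsetneq \clos I$. The first move is to replace $J$ by $\clos J$: units form an open set in $\GenKc$ (Lemma \ref{inv-char}.2 applied with $S=(0,1]$), so a proper ideal contains no units and its closure remains proper. So I may assume $J$ is closed, with $\clos I \subsetneq J$. Now Lemma \ref{pseudoprime-ideal-filter} gives that $\Filter(I)$ is a pseudoprime a-filter, and Theorem \ref{max-filter}.1 upgrades this to maximality of $\clos{\Filter(I)}$. Since $I\subseteq J$ entails $\Filter(I)\subseteq\Filter(J)$, and hence $\clos{\Filter(I)}\subseteq\clos{\Filter(J)}$, maximality forces the equality $\clos{\Filter(I)}=\clos{\Filter(J)}$.

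It remains to return from filters to ideals. Theorem \ref{equal-ideal-of-filter} converts the filter equality into $I(\Filter(I))=I(\Filter(J))$; Theorem \ref{pure-char} rewrites this as $\pure(I)=\pure(J)$; and Theorem \ref{equal-filter-of-ideal} then gives $\clos I=\clos J=J$, contradicting $\clos I\subsetneq J$. This yields part 1.

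Part 2 follows immediately. For the forward direction, a maximal $I$ is prime by classical commutative algebra, and $\clos I$ is an ideal containing $I$ which is proper (units open), so $\clos I = I$ by maximality. For the converse, a prime ideal is pseudoprime by Proposition \ref{pseudoprime-ideal-char}, so part 1 shows $\clos I$ is maximal, and the assumed closedness of $I$ identifies it with $\clos I$. The only mildly delicate step is the closure manoeuvre at the start of part 1, which rests on openness of the unit group; everything else is bookkeeping through the three equivalence theorems already in place.
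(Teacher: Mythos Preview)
Your argument is correct and follows essentially the same route as the paper: pass to filters via Lemma \ref{pseudoprime-ideal-filter}, invoke Theorem \ref{max-filter} to get maximality of $\clos{\Filter(I)}$, and then return to ideals via the equivalence theorems to force $\clos I=\clos J$. The only cosmetic difference is that the paper argues directly rather than by contradiction: for any proper $J\supseteq\clos I$ it derives $\pure(\clos I)=\pure(J)$ and hence $J\subseteq\clos J=\clos I$, so the preliminary step of replacing $J$ by $\clos J$ is not needed there (though it is harmless in your version).
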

\begin{proof}
1. By proposition \ref{pseudoprime-ideal-char}, $\Filter(I)=\Filter(\clos I)$ is pseudoprime. Thus by theorem \ref{max-filter}, $\clos\Filter(\clos I)$ is maximal. Now let $\clos I\subseteq J\idealproper\GenKc$. Then $\clos\Filter(\clos I)\subseteq \clos\Filter(J)$, and hence $\clos\Filter(\clos I) = \clos\Filter(J)$ by maximality. Hence also $\pure(\clos I) = I(\Filter(\clos I)) = I(\clos\Filter(\clos I)) = I(\clos\Filter(J)) = I(\Filter(J)) = \pure(J)$, and hence $J \subseteq \clos J = \clos I$ by theorem \ref{equal-filter-of-ideal}.

2. $\Rightarrow$: let $E$ denote the set of invertible elements in $\GenKc$. As $I$ is a proper ideal, $I\cap E=\emptyset$. As $E$ is open, also $\clos I\cap E=\emptyset$. Hence $\clos I$ is proper, and $I=\clos I$ by maximality. Maximal ideals are prime in any commutative ring with $1$.\\
$\Leftarrow$: by part 1, $I=\clos I$ is maximal.
\end{proof}
\begin{cor}\label{char-min-max-prime-ideals}\leavevmode\par
1. The set of minimal prime ideals in $\GenKc$ equals
\[\{I(\Filter): \Filter \text{ is a max.\ a-filter on }(0,1]\}=\{I(\Filter): \Filter \text{ is a pseudoprime a-filter on }(0,1]\}.\]
2. The set of maximal ideals in $\GenKc$ equals
\[\{\clos I(\Filter): \Filter \text{ is a max.\ a-filter on }(0,1]\}=\{\clos I(\Filter): \Filter \text{ is a pseudoprime a-filter on }(0,1]\}.\]
\end{cor}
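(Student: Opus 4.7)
My plan is to leverage the correspondence between pure ideals and $\prec$-open a-filters (Theorems \ref{pure-char}, \ref{ext-open-char}) together with the maximality characterizations in Theorems \ref{max-filter} and \ref{max-ideal}. The second equality in each of the two parts is essentially cosmetic: if $\Filter$ is pseudoprime, then $\clos\Filter$ is maximal by Theorem \ref{max-filter}.1, and by Theorem \ref{equal-ideal-of-filter} one has $I(\Filter)=I(\clos\Filter)$ (hence also $\clos I(\Filter)=\clos I(\clos\Filter)$); conversely, every maximal a-filter is prime (Theorem \ref{max-filter}.2), hence pseudoprime. So it suffices to identify $\{I(\Filter):\Filter \text{ pseudoprime}\}$ with the set of minimal prime ideals, and $\{\clos I(\Filter):\Filter \text{ pseudoprime}\}$ with the set of maximal ideals.

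For part 2, if $\Filter$ is pseudoprime, then $I(\Filter)$ is prime by Proposition \ref{pseudoprime-filter-kar}, hence pseudoprime, so $\clos I(\Filter)$ is maximal by Theorem \ref{max-ideal}.1. Conversely, given a maximal ideal $M$, Theorem \ref{max-ideal}.2 gives that $M$ is closed and prime; thus $\Filter(M)$ is pseudoprime by Proposition \ref{pseudoprime-ideal-char}, and Corollary \ref{clos-of-pure} yields $M=\clos M=\clos I(\Filter(M))$, as required.

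The main obstacle is part 1: showing that $I(\Filter)$ is actually \emph{minimal} prime (not merely prime) when $\Filter$ is pseudoprime. Primality follows from Proposition \ref{pseudoprime-filter-kar}. For minimality, I would pick any prime $Q\subseteq I(\Filter)$ and chase through the filter side: from $Q\subseteq I(\Filter)$ I get $\Filter(Q)\subseteq\Filter(I(\Filter))=\inter\Filter\subseteq\Filter$ (Theorem \ref{ext-open-char}.1), hence $\clos{\Filter(Q)}\subseteq\clos\Filter$. Now $\Filter(Q)$ is pseudoprime because $Q$ is prime (Proposition \ref{pseudoprime-ideal-char}), so both $\clos{\Filter(Q)}$ and $\clos\Filter$ are maximal a-filters by Theorem \ref{max-filter}.1; the inclusion between them therefore forces equality. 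Theorem \ref{equal-ideal-of-filter} together with Theorem \ref{pure-char} then gives $\pure(Q)=I(\Filter(Q))=I(\Filter)$, and since $\pure(Q)\subseteq Q\subseteq I(\Filter)$ we conclude $Q=I(\Filter)$. For the reverse inclusion, given any minimal prime $P$, Proposition \ref{pseudoprime-ideal-char} (applied to $P$, which is pseudoprime since prime) ensures both that $\Filter(P)$ is pseudoprime and that $\pure(P)=I(\Filter(P))$ is prime; minimality of $P$ then forces $P=\pure(P)=I(\Filter(P))$, completing the characterization.
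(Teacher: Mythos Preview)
Your proof is correct and follows essentially the same route as the paper's own argument: both directions of part~1 use that $\Filter(Q)$ is pseudoprime for $Q$ prime, pass to $\clos\Filter(Q)$ via Theorem~\ref{max-filter}, and conclude equality of the associated pure ideals via Theorem~\ref{equal-ideal-of-filter}; part~2 is handled in the paper exactly as you do, through Theorem~\ref{max-ideal} and Corollary~\ref{clos-of-pure}. Your explicit preliminary reduction of the ``maximal a-filter'' set to the ``pseudoprime a-filter'' set via $I(\Filter)=I(\clos\Filter)$ is a minor organizational choice, not a substantive difference.
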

\begin{proof}
1.(a) Let $I\idealproper\GenKc$ be a minimal prime. Then $\Filter(I)$ is pseudoprime, and $I(\Filter(I))\subseteq I$ is a prime ideal. By minimality, $I=I(\Filter(I)) = I(\clos \Filter(I))$ and $\clos\Filter(I)$ is maximal.

(b) Let $\Filter$ be a pseudoprime a-filter on $(0,1]$. Then $I(\Filter)$ is prime by proposition \ref{pseudoprime-filter-kar}. If $P\idealproper\GenKc$ is prime with $P\subseteq I(\Filter)$, then $\Filter(P)\subseteq \Filter(I(\Filter))\subseteq \Filter$, and hence $\clos\Filter(P)\subseteq \clos\Filter$. As $P$ is prime, $\Filter(P)$ is pseudoprime, and hence $\clos\Filter(P)$ is maximal by theorem \ref{max-filter}. Hence $\clos\Filter(P)=\clos\Filter$. Consequently, $P\supseteq I(\Filter(P)) = I(\clos\Filter(P)) = I(\clos\Filter)= I(\Filter)$.

2.(a) Let $I\idealproper\GenKc$ be maximal. Then $I$ is pseudoprime, hence $\Filter(I)$ is pseudoprime, and thus $\clos\Filter(I)$ is maximal. Further, $I=\clos I = \clos{\pure(I)} = \clos I(\Filter(I)) = \clos I(\clos \Filter(I))$.

(b) Let $\Filter$ be a pseudoprime a-filter on $(0,1]$. Then $I(\Filter)$ is pseudoprime, hence $\clos I(\Filter)$ is maximal.
\end{proof}

\begin{prop}\label{intersection-of-max-ideals}
Let $I\idealproper \GenK$. Then $\clos I =\bigcap_{I\subseteq M\atop {M\text{ maximal}}}M$.\\
In particular, an ideal $I\idealproper\GenK$ is closed iff it is an intersection of maximal ideals.
\end{prop}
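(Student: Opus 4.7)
The inclusion $\clos I \subseteq \bigcap_{I \subseteq M} M$ is immediate from Theorem \ref{max-ideal}: every maximal ideal $M$ is closed, so $I \subseteq M$ forces $\clos I \subseteq \clos M = M$. The content of the statement is the reverse inclusion, so my plan is, given $x \in \GenKc \setminus \clos I$, to produce a maximal ideal $M \supseteq I$ with $x \notin M$.

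By Corollary \ref{clos-of-pure}, $\clos I = \clos I(\Filter(I))$, so Theorem \ref{clos-ideal-char} applied to $x$ yields an $S \in \ClosedCharsets$ such that $\restr x {\co S}$ is invertible and $S \notin \Filter(I)$. The plan is to extend $\Filter(I)$ to a pseudoprime a-filter $\Filter'$ still avoiding $S$ and take $M := \clos I(\Filter')$. If this succeeds, Corollary \ref{char-min-max-prime-ideals} guarantees $M$ is maximal, the inclusion $\Filter(I) \subseteq \Filter'$ gives $I \subseteq \clos I = \clos I(\Filter(I)) \subseteq \clos I(\Filter') = M$, and Theorem \ref{clos-ideal-char} applied to $\Filter'$ together with the witness $S$ gives $x \notin M$. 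To produce $\Filter'$, I apply Zorn's lemma to the family of a-filters containing $\Filter(I)$ and not containing $S$ (unions of chains visibly stay in this family), and I pick a maximal element $\Filter'$.

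The main obstacle is showing this $\Filter'$ is actually pseudoprime. I would argue by contradiction: suppose $T_1, T_2 \in \ClosedCharsets$ satisfy $\inter T_1 \cup \inter T_2 = (0,1]$ but $T_1, T_2 \notin \Filter'$. If there exist $V \in \Filter'$ and $i \in \{1,2\}$ with $V \cap T_i = \emptyset$, then $V \subseteq (0,1] = T_1 \cup T_2$ forces $V \subseteq T_{3-i}$, giving $T_{3-i} \in \Filter'$ --- contradiction. Otherwise $V \cap T_i \neq \emptyset$ for every $V \in \Filter'$ and each $i$, so the family
\[\Filter'_i := \{U \subseteq (0,1] \text{ closed} : U \supseteq V \cap T_i \text{ for some } V \in \Filter'\}\]
is a genuine a-filter (the non-emptiness condition just excluded is exactly the obstruction; the other axioms are routine, and it contains every $(0,\delta]$ since $\Filter'$ does) that strictly extends $\Filter'$ because $T_i \in \Filter'_i$. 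By maximality of $\Filter'$ in our Zorn family, $S \in \Filter'_i$, yielding $V_i \in \Filter'$ with $V_i \cap T_i \subseteq S$. Setting $V := V_1 \cap V_2 \in \Filter'$ and using $V \subseteq T_1 \cup T_2$, I get $V = (V \cap T_1) \cup (V \cap T_2) \subseteq S$, so $S \in \Filter'$ --- the desired contradiction.

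The \textit{In particular} clause follows immediately: an intersection of (closed) maximal ideals is closed, and conversely the main equality exhibits every closed ideal as such an intersection. The one genuine difficulty is the pseudoprime verification for $\Filter'$ above; the rest is assembly of the earlier theorems.
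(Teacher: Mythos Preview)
Your proof is correct, but it takes a different route from the paper's. After the common opening step --- finding $S\in\ClosedCharsets\setminus\Filter(I)$ with $\restr x{\co S}$ invertible --- the paper proceeds purely ring-theoretically: it observes that the set $E=\{y\in\GenKc:\restr y{\co S}\text{ is invertible}\}$ is multiplicatively closed and disjoint from $I$ (precisely because $S\notin\Filter(I)$), so the standard prime-avoidance lemma yields a prime $P\supseteq I$ with $P\cap E=\emptyset$; since $E$ is open (Lemma~\ref{inv-char}), $\clos P\cap E=\emptyset$ as well, and $\clos P$ is the required maximal ideal by Theorem~\ref{max-ideal}. Your argument instead Zorns on the filter side, extending $\Filter(I)$ to a maximal a-filter $\Filter'$ avoiding $S$, verifying pseudoprimeness of $\Filter'$ by hand, and then invoking Corollary~\ref{char-min-max-prime-ideals} and Theorem~\ref{clos-ideal-char}. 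The paper's argument is shorter and leans on a classical algebraic fact; yours stays entirely within the filter framework developed in the paper and in effect reproves the needed instance of prime avoidance in that language. Both are perfectly valid.
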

\begin{proof}
$\subseteq$: by theorem \ref{max-ideal}, maximal ideals are closed.\\
$\supseteq$: let $x\notin \clos I = \clos{\pure(I)} = \clos I(\Filter(I))$ (corollary \ref{clos-of-pure}). By theorem \ref{clos-ideal-char}, there exists $S\in\ClosedCharsets\setminus \Filter(I)$ such that $\restr x {\co S}$ is invertible. Let $E:=\{x\in\GenKc: \restr x {\co S}$ is invertible$\}$. As $E$ is closed under multiplication and $E\cap I=\emptyset$, there exists a prime $P\idealproper\GenKc$ such that $I\subseteq P$ and $E\cap P=\emptyset$ (e.g., \cite[0.16]{GJ}). As $E$ is open (lemma \ref{inv-char}), also $E\cap \clos P = \emptyset$. In particular, $\clos P$ is maximal and $x\notin \clos P$.
\end{proof}

\begin{rem}
In the previous, we showed that maximal ideals of $\GenKc$ are in bijective correpondence with maximal a-filters, which are in bijective correspondence with points of $\beta(0,1]\setminus(0,1]$, where $\beta(0,1]$ denotes the Stone-\v Cech compactification of $(0,1]$ (cf.\ \cite[6.5]{GJ}).
\end{rem}

\section{Rapid a-filters}
\begin{df}
An a-filter $\Filter$ is called \defstyle{rapid} if for each sequence $(S_n)_n$ in $\Filter$ with $S_1\succ S_2\succ \dots$, there exists $T\in\Filter$ such that $T\setminus S_n\notin\Charsets$.
\end{df}

\begin{thm}\label{rapid-filter-closed-ideal}
Let $\Filter$ be an a-filter. Then $I(\Filter)$ is closed iff $\Filter$ is rapid.
\end{thm}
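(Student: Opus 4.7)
The plan uses Theorem~\ref{clos-ideal-char}, which identifies $\clos{I(\Filter)}$ with $I^+(\Filter) := \{x\in\GenKc: \forall S\in\ClosedCharsets,\ \restr{x}{\co S}\text{ invertible}\implies S\in\Filter\}$. Thus $I(\Filter)$ is closed iff every $x \in I^+(\Filter)$ already vanishes on some element of $\Filter$. Rapidity is exactly the tool that turns the \emph{family} of witnesses $\{\co L_n\}$ for $x \in I^+(\Filter)$ (defined below) into a \emph{single} $T \in \Filter$ on which $x$ is negligible; conversely, failure of rapidity should make room for a recalcitrant $x$.

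For the forward direction ($\Filter$ rapid implies $I(\Filter)$ closed), I mimic the end of the proof of Theorem~\ref{clos-ideal-char}: given $x = [x_\eps] \in I^+(\Filter)$, set $L_n := \{\eps\in (0,1] : |x_\eps| > \eps^n\}$. Continuity gives $L_n \prec L_{n+1}$, and $\restr x {L_n}$ is invertible by construction, so $\co L_n \in \Filter$. By Lemma~\ref{ext-eltair}.2 the sequence satisfies $\co L_1 \succ \co L_2 \succ \cdots$; rapidity then supplies $T \in \Filter$ with $T \cap L_n = T \setminus \co L_n \notin \Charsets$ for every $n$. Unpacking this as $T \cap L_n \cap (0, \delta_n] = \emptyset$ for some $\delta_n > 0$ gives $|x_\eps| \le \eps^n$ on $T \cap (0, \delta_n]$, so $\restr x T = 0$ and $x \in I(\Filter)$.

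For the converse I argue contrapositively: from a witness sequence $S_1 \succ S_2 \succ \cdots$ in $\Filter$ for non-rapidity (so every $T \in \Filter$ has some $T \setminus S_n \in \Charsets$), I construct $x \in I^+(\Filter) \setminus I(\Filter)$. Since $S_{n+1} \prec S_n$, Urysohn's lemma (as in Lemma~\ref{ext-eltair}.1) produces $\phi_n \in \Cnt((0,1])$ with $0 \le \phi_n \le 1$, $\restr{\phi_n}{S_{n+1}} = 0$, $\restr{\phi_n}{\co S_n} = 1$, and I set
\[x_\eps := \sum_{n=1}^\infty 2^{-n}\phi_n(\eps)\eps^n.\]
The termwise bound $2^{-n}\phi_n(\eps)\eps^n \le 2^{-n}$ gives uniform convergence on $(0,1]$, so $x$ is continuous and bounded, hence moderate. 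Two matching estimates then drive the rest. On $\co S_n$, $\phi_m \equiv 1$ for every $m \ge n$ (since $\co S_n \subseteq \co S_m$), giving $|x_\eps| \ge \eps^n/2^{n-1}$; on $S_k$, $\phi_m \equiv 0$ for every $m < k$ (since $S_k \subseteq S_{m+1}$), giving $|x_\eps| \le \eps^k/2^{k-1}$. The lower bound yields invertibility of $\restr x {\co S_n}$; combined with the upper bound on $S_{m+2}$, it forces any $S \in \ClosedCharsets$ with $\restr x {\co S}$ invertible of some order $m$ to contain $S_{m+2} \cap (0, \delta]$ for small $\delta > 0$, whence $S \in \Filter$. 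Thus $x \in I^+(\Filter)$. On the other hand, for any $T \in \Filter$ non-rapidity supplies $n$ with $T \setminus S_n \in \Charsets$, and on this characteristic subset $|x_\eps| \ge \eps^n/2^{n-1}$, so $\restr x T \ne 0$.

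The main obstacle is the construction in the converse direction: $x$ has to be tailored so that its order of vanishing tracks $(S_n)$ tightly from both sides --- large enough off $S_n$ that membership in $I^+(\Filter)$ pins the associated filter of $x$ to this sequence, and small enough on $S_n$ that the upper bound has real content. The weighted Urysohn sum with geometric factor $2^{-n}$ achieves both simultaneously (while keeping $x$ continuous and moderate on all of $(0,1]$), but checking that these sandwich bounds propagate correctly through the definitions of $I^+(\Filter)$ and $I(\Filter)$, and that non-rapidity sabotages every candidate $T$, requires the careful bookkeeping sketched above.
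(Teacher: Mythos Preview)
Your proof is correct and essentially the same as the paper's: the forward direction is identical (the paper writes $S_n=\co{L_n}$ directly), and for the converse both build the same Urysohn sum adapted to $(S_n)$, the paper omitting your $2^{-n}$ factor but restricting to $(0,1/2]$ for uniform convergence and arguing directly rather than by contraposition. (One minor slip: your lower bound on $\co{S_n}$ should read $\eps^n/2^n$, not $\eps^n/2^{n-1}$ --- the single term $2^{-n}\phi_n\eps^n$ already gives this --- but the argument is unaffected.)
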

\begin{proof}
$\Leftarrow$: let $a\in \clos I(\Filter)$ with continuous representative $(a_\eps)_\eps$. For each $n\in\N$, let $S_n:=\{\eps\in (0,1]: \abs{a_\eps}\le \eps^n\}$. By theorem \ref{clos-ideal-char}, $S_n\in\Filter$, and also $S_1\succ S_2\succ \dots$. As $\Filter$ is rapid, there exists $T\in\Filter$ such that $T\setminus S_n\notin\Charsets$. Hence $\restr{\abs{a}}{T}\le \caninf^n$, for each $n\in\N$, i.e., $\restr{a}T=0$. Hence $a\in I(\Filter)$.

$\Rightarrow$: let $S_n\in\Filter$, and also $S_1\succ S_2\succ \dots$. By Urysohn's lemma, there exist $\phi_n\in\Cnt((0,1])$ such that $0\le\phi_n\le \eps^n$, $\restr{\phi_n}{S_{n+1}}=0$ and $\restr{\phi_n}{\co S_n}= \eps^n$. Let $\phi:=\sum_{n=1}^\infty \phi_n$ on $(0, 1/2]$. By uniform convergence, $\phi$ is continuous and $\eps^{n+1}\le \phi(\eps)\le \eps^n+\eps^{n+1} +\dots \le 2\eps^n$ on $(0,1/2]\cap S_n\setminus S_{n+1}$. Extend $\phi$ to a continuous map on $(0,1]$. Then $a:=[\phi(\eps)]\in\GenKc$.\\
Let $T\in\ClosedCharsets$ be such that $\restr a{\co T}$ is invertible. Then there exists $n\in\N$ such that $\abs{\phi(\eps)}> 2\eps^n$ for $\eps\in\co T\cap (0,\delta]$ (some $0<\delta\le 1/2$). Hence $S_n\cap (0,\delta]\subseteq T$, and $T\in\Filter$. By theorem \ref{clos-ideal-char}, $a\in \clos I(\Filter)=I(\Filter)$. Thus there exists $T\in\Filter$ such that $\restr aT=0$.\\
Let $n\in\N$. Then $\abs{\phi(\eps)}<\eps^n$ for each $\eps\in(0,\delta]\cap T$ (some $0<\delta\le 1/2$). Hence $(0,\delta]\cap T\setminus S_n = \emptyset$.
\end{proof}

\begin{rem}
Recall that a filter $\Filter$ of subsets of $\N$ is called rapid if for any decreasing sequence $(S_n)_n$ in $\Filter$, there exists $S\in\Filter$ such that $S\setminus S_n$ is finite for every $n\in\N$. A free ultrafilter $\mathcal U$ of subsets of $\N$ is called weakly selective (or $\delta$-stable or P-point of $\beta\N\setminus \N$) if for each sequence $(S_n)_n$ in $\mathcal U$, there exists $S\in\mathcal U$ such that $S\setminus S_n$ is finite for each $n\in\N$. There exist weakly selective free ultrafilters if we assume the continuum hypothesis \cite{Rudin,Choquet68} (in fact, it satisfies to assume weaker axioms, e.g.\ ZFC+Martin's axiom \cite[\S 4]{Booth}). By definition, a weakly selective free ultrafilter is rapid.
\end{rem}

\begin{lemma}
There exists a rapid maximal a-filter, if we assume the continuum hypothesis.
\end{lemma}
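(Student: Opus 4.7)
The plan is to leverage the existence under CH of a weakly selective free ultrafilter $\mathcal U$ on $\N$ (mentioned in the preceding remark) and transport it to $(0,1]$ via the sequence $1/n$. Specifically, I would define
\[\Filter := \{S\subseteq(0,1]\text{ closed} : \{n\in\N : 1/n\in S\}\in\mathcal U\}\]
and verify that $\Filter$ is a prime a-filter: $\emptyset\notin\Filter$ because $\emptyset\notin\mathcal U$; closure under finite intersection and supersets comes from $\mathcal U$ being a filter; $(0,\delta]\in\Filter$ because $\{n:1/n\le\delta\}$ is cofinite and $\mathcal U$ is free; and primeness follows from $\mathcal U$ being an ultrafilter, since $\{n:1/n\in S\cup T\}=\{n:1/n\in S\}\cup\{n:1/n\in T\}\in\mathcal U$ forces one of the two summands to lie in $\mathcal U$. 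In particular $\Filter$ is pseudoprime, so by Theorem \ref{max-filter}.1 the $\prec$-closure $\clos\Filter$ is a maximal a-filter.

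The core step is to show that $\Filter$ itself is rapid. Given a $\prec$-decreasing sequence $S_1\succ S_2\succ\dots$ in $\Filter$, set $A_n:=\{k\in\N : 1/k\in S_n\}\in\mathcal U$; these form a decreasing sequence in $\mathcal U$, so by weak selectivity there exists $A\in\mathcal U$ with $A\setminus A_n$ finite for every $n$. I would then take $T:=\{1/k : k\in A\}$, which is closed in $(0,1]$ (its only potential accumulation point is $0\notin(0,1]$), belongs to $\Charsets$ since $A$ is infinite, and satisfies $\{k:1/k\in T\}=A\in\mathcal U$, so $T\in\Filter$. Because $T\setminus S_n\subseteq\{1/k:k\in A\setminus A_n\}$ is a finite subset of $(0,1]$, it cannot have $0$ in its closure, so $T\setminus S_n\notin\Charsets$.

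Finally, I would transfer rapidness from $\Filter$ to $\clos\Filter$. Since an a-filter and its $\prec$-closure trivially have the same $\prec$-closure, Theorem \ref{equal-ideal-of-filter} gives $I(\Filter)=I(\clos\Filter)$, and Theorem \ref{rapid-filter-closed-ideal} then shows that $\clos\Filter$ is rapid iff this common ideal is closed iff $\Filter$ is rapid. Hence $\clos\Filter$ is the desired rapid maximal a-filter. The only delicate point is this last transfer step; the rest amounts to routine verifications using the ultrafilter and weak selectivity properties of $\mathcal U$.
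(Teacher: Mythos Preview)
Your argument is correct. You and the paper construct the same filter $\Filter=\{S\in\ClosedCharsets:\{n:1/n\in S\}\in\mathcal U\}$ from a weakly selective (hence rapid) free ultrafilter $\mathcal U$ on $\N$, and both verify that $\Filter$ is a prime, rapid a-filter in essentially the way you describe.

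The one genuine difference is in the final step. The paper shows directly that $\Filter$ is already $\prec$-closed: given $S\in\clos\Filter$, one can build a closed $T\succ S$ that contains exactly the same points $1/n$ as $S$ does (this is possible because $S$ is closed and the sequence $(1/n)$ has no accumulation point inside $(0,1]$), so $T\in\Filter$ forces $S\in\Filter$. Then Theorem~\ref{max-filter}(2) gives maximality of $\Filter$ itself. You instead pass to $\clos\Filter$, obtain its maximality from Theorem~\ref{max-filter}(1), and then transfer rapidness via $I(\Filter)=I(\clos\Filter)$ (Theorem~\ref{equal-ideal-of-filter}) together with Theorem~\ref{rapid-filter-closed-ideal}. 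Your route is slightly less direct but perfectly valid, and it nicely illustrates how the ideal--filter correspondence makes rapidness a property of the $\prec$-closure class of $\Filter$; the paper's route is more self-contained and in fact shows the stronger statement that $\Filter=\clos\Filter$.
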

\begin{proof}
Let $\mathcal U$ be a rapid free ultrafilter on $\N$. Let
\[\Filter:=\big\{S\in\ClosedCharsets: \{n\in\N: 1/n\in S\}\in\mathcal U\big\}.\]
From the fact that $\mathcal U$ is a filter, it is straightforward to check that $\Filter$ is an a-filter. From the fact that $\mathcal U$ is rapid, resp.\ maximal, it is straightforward to check that $\Filter$ is a rapid, resp.\ prime a-filter. By theorem \ref{max-filter}, it suffices to show that $\Filter$ closed. Let $S\in\clos \Filter$. As $S$ is a closed set, there exists a closed $T\succ S$ such that $\{n\in\N: 1/n\in T\} = \{n\in\N: 1/n\in S\}$. Since $T\in\Filter$, $\{n\in\N: 1/n\in T\} \in \mathcal U$. Hence also $S\in\Filter$.
\end{proof}

\begin{prop}
There exists a prime ideal in $\GenKc$ which is both minimal and maximal, if we assume the continuum hypothesis.
\end{prop}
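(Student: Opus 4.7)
The plan is to combine the preceding lemma (existence of a rapid maximal a-filter under CH) with Corollary \ref{char-min-max-prime-ideals} and Theorem \ref{rapid-filter-closed-ideal}, so that no extra construction is needed.

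Concretely, I would proceed as follows. Assuming the continuum hypothesis, invoke the previous lemma to obtain a rapid maximal a-filter $\Filter$ on $(0,1]$. Form the ideal $I(\Filter)\idealproper\GenKc$. By Corollary \ref{char-min-max-prime-ideals}.1, $I(\Filter)$ is a minimal prime ideal of $\GenKc$. By Corollary \ref{char-min-max-prime-ideals}.2, the maximal ideal of $\GenKc$ corresponding to $\Filter$ is $\clos I(\Filter)$. It therefore suffices to show $I(\Filter)=\clos I(\Filter)$, i.e., that $I(\Filter)$ is closed in the sharp topology.

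This is exactly where the rapidness of $\Filter$ is used: by Theorem \ref{rapid-filter-closed-ideal}, $I(\Filter)$ is closed if and only if $\Filter$ is rapid. Since $\Filter$ is rapid by construction, $I(\Filter)$ is closed, hence $I(\Filter)=\clos I(\Filter)$ is simultaneously a minimal prime and a maximal ideal.

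There is essentially no hard step here, since all the real work has been done in the preceding results: the existence result for rapid maximal a-filters (which rests on CH through the existence of a rapid free ultrafilter on $\N$), the pair of bijective correspondences in Corollary \ref{char-min-max-prime-ideals} identifying minimal primes with $I(\Filter)$ and maximal ideals with $\clos I(\Filter)$ for maximal (equivalently pseudoprime) a-filters, and the closedness criterion of Theorem \ref{rapid-filter-closed-ideal}. The only conceptual point to double-check is that the same $\Filter$ is used on both sides of Corollary \ref{char-min-max-prime-ideals}, so that $I(\Filter)$ and $\clos I(\Filter)$ coincide precisely when $I(\Filter)$ is closed; this is immediate from the definitions.
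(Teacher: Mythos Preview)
Your argument is correct and follows essentially the same route as the paper: take a rapid maximal a-filter $\Filter$ from the preceding lemma, use Theorem~\ref{rapid-filter-closed-ideal} to see that $I(\Filter)$ is closed, and then invoke Corollary~\ref{char-min-max-prime-ideals} to conclude that $I(\Filter)=\clos I(\Filter)$ is simultaneously a minimal prime and a maximal ideal. The paper's proof is merely a terser version of what you wrote.
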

\begin{proof}
Let $\Filter$ be a rapid maximal a-filter. By theorem \ref{rapid-filter-closed-ideal}, $I(\Filter)$ is closed, hence $I(\Filter)$ is both a minimal and maximal prime ideal by corollary \ref{char-min-max-prime-ideals}.
\end{proof}

\section{$z$-ideals}
As the notion of $z$-ideal in the ring $\Cnt(X)$ of continuous functions on a topological space $X$ can be expressed by a purely algebraic condition \cite[4A]{GJ}, G.~Mason \cite{Mason73} used this condition to define a $z$-ideal of any commutative ring $R$ with $1$. 
\begin{df}
Denoting by $\Max(a)=\{M$ max.\ ideals of $R: a\in M\}$, $I\ideal R$ is a $z$-ideal if
\[(\forall a\in R)(\forall b\in I)(\Max(a) = \Max(b)\implies a\in I).\]
\end{df}
We proceed to show a similar characterization as for $z$-ideals in $\GenK$. As in \cite{HVIdeals}, we denote $Z(a):=\{S\in\Charsets: \restr a S = 0\}$.

\begin{thm}\label{thm_z-ideals_equiv}
Let $a, b\in\GenKc$. Then $\Max(a)\subseteq \Max(b)\iff Z(a)\subseteq Z(b)$.
\end{thm}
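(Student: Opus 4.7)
The plan is to relate both sides to the sharp closure of the principal ideal $(a)$ via its associated a-filter. If $a$ is a unit, then $\Max(a)=\emptyset$ and, since an invertible element cannot vanish on any $S\in\Charsets$, $Z(a)=\emptyset$ as well, so both inclusions hold trivially; assume henceforth that $a$ is a non-unit, so $(a)\idealproper\GenKc$. Proposition \ref{intersection-of-max-ideals} applied to $(a)$ yields $\Max(a)\subseteq\Max(b)\iff b\in\clos{(a)}$. By Corollary \ref{clos-of-pure} we have $\clos{(a)}=\clos I(\Filter((a)))$; since an element of $(a)$ has the form $ra$ and $\restr{(ra)}{\co S}$ is invertible iff $\restr{a}{\co S}$ is (take $r=1$ in one direction; in the other, note that invertibility of a product in $\GenKc$ forces each factor to be invertible), one computes $\Filter((a))=\{S\in\ClosedCharsets:\restr{a}{\co S}\text{ is invertible}\}$. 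Theorem \ref{clos-ideal-char} then rephrases the inclusion as
\[(\star)\quad \Max(a)\subseteq\Max(b) \iff \bigl(\forall S\in\ClosedCharsets\colon \restr{b}{\co S}\text{ invertible}\implies \restr{a}{\co S}\text{ invertible}\bigr),\]
and the task reduces to proving $(\star)\iff Z(a)\subseteq Z(b)$.

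The direction $Z(a)\subseteq Z(b)\Rightarrow(\star)$ is short: given $S\in\ClosedCharsets$ with $\restr{b}{\co S}$ invertible, if $\restr{a}{\co S}$ were not invertible, Lemma \ref{inv-char}.1(d) would yield $T\in\Charsets$ with $T\subseteq\co S$ and $\restr{a}{T}=0$; then $T\in Z(a)\subseteq Z(b)$, so $\restr{b}{T}=0$, contradicting the fact that $\restr{b}{T}$ is invertible (inherited from $\restr{b}{\co S}$) on a characteristic set.

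For the converse $(\star)\Rightarrow Z(a)\subseteq Z(b)$, assume $(\star)$ and $S\in\Charsets$ with $\restr{a}{S}=0$. Supposing $\restr{b}{S}\ne 0$, Lemma \ref{inv-char}.3 supplies $T\subseteq S$, $T\in\Charsets$, with $\restr{b}{T}$ invertible, witnessed by $|b_\eps|\ge\eps^n$ on $T\cap(0,\delta)$. Set $T_1:=\{\eps:|b_\eps|>\eps^n/2\}$ (open by continuity) and, using Proposition \ref{zero-inv-ext}.1, obtain an open $S^*\supseteq S$ on which $a$ still vanishes; then $W:=T_1\cap S^*$ is open, contains $T\cap(0,\delta)$, lies in $\Charsets$, and satisfies $\restr{b}{W}$ invertible while $\restr{a}{W}=0$. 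If $0\in\clos{\co W}$, then $U:=\co W\in\ClosedCharsets$ and $(\star)$ applied to $U$ forces $\restr{a}{W}$ to be invertible, contradicting $\restr{a}{W}=0$ on the characteristic set $W$. Otherwise $W\supseteq(0,\delta']$ for some $\delta'>0$, forcing $a=0$ and $b$ invertible; applying $(\star)$ to the auxiliary closed discrete set $S_0=\{1/k:k\ge 1\}\in\ClosedCharsets$ (whose complement is in $\Charsets$) then demands that $\restr{a}{\co S_0}=\restr{0}{\co S_0}$ be invertible, which is absurd. The main obstacle is this degenerate edge case in which the open set $W$ swallows an entire neighborhood of $0$; it is dispatched cleanly by testing $(\star)$ against the auxiliary discrete sequence $S_0$.
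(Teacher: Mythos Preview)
Your proof is correct, but it takes a genuinely different route from the paper's. The paper argues each implication directly: for $\Max(a)\subseteq\Max(b)\Rightarrow Z(a)\subseteq Z(b)$ it starts from $S\in Z(a)\setminus Z(b)$, picks $T\subseteq S$ with $\restr b T$ invertible, takes any maximal ideal $M$ above $\{x:\restr x T=0\}$, and shows via Urysohn's lemma that $M\in\Max(a)\setminus\Max(b)$; for the converse it starts from $M\in\Max(a)\setminus\Max(b)$, writes $1=m+bc$ with $m\in M$, and uses the splitting from \cite[Lemma~4.1]{HVIdeals} to produce $T\in Z(a)\setminus Z(b)$. Your argument instead factors through the identity $\Max(a)\subseteq\Max(b)\iff b\in\clos{(a)}$ (Proposition~\ref{intersection-of-max-ideals}), rewrites $\clos{(a)}$ via Corollary~\ref{clos-of-pure} and Theorem~\ref{clos-ideal-char} as the condition $(\star)$ on invertibility over complements of closed sets, and then matches $(\star)$ against $Z(a)\subseteq Z(b)$ using Lemma~\ref{inv-char} and Proposition~\ref{zero-inv-ext}. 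The trade-off: the paper's proof is self-contained and needs nothing beyond Section~3, while yours exploits the filter--ideal correspondence of Sections~5--6 and thereby makes the connection to $\clos{(a)}$ explicit, at the cost of relying on that machinery and having to dispatch the degenerate case where your open set $W$ swallows a whole $(0,\delta']$ (which you handle correctly but somewhat artificially via the auxiliary set $S_0$; a cleaner finish there would be to note that $a=0$ together with $(\star)\iff\Max(a)\subseteq\Max(b)$ and Proposition~\ref{intersection-of-max-ideals} forces $b\in\clos{\{0\}}=\{0\}$, contradicting the assumed invertibility of $b$).
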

\begin{proof}
$\Rightarrow$: let $S\in Z(a)\setminus Z(b)$, i.e., $\restr a S=0$ and $\restr b S\ne 0$. By lemma \ref{inv-char}, there exists $T\in\Charsets$ with $T\subseteq S$ such that $\restr b T$ is invertible. Let $M$ be a maximal ideal containing $I:=\{x\in \GenKc: \restr x T=0\}\idealproper \GenKc$. Since $\restr a S = 0$, also $\restr a T = 0$, hence $a\in M$. Suppose that $b\in M$. Since $\restr b T$ is invertible, $\restr b U$ is invertible for some $U\succ T$. By Urysohn's lemma, there exists $x\in\GenKc$ such that $\restr x T=0$ and $\restr x {\co U}= 1$. Hence $x\in I\subseteq M$, and $\bar x x + \bar b b = \abs x^2 + \abs b^2\in M$ would be invertible, a contradiction. We conclude that $M\in \Max(a)\setminus \Max(b)$.\\
$\Leftarrow$: let $M\in \Max(a)\setminus \Max(b)$, so $a\in M$ and $b\notin M$. As $M$ is maximal, $M + b\GenKc=\GenKc$. Let $m\in M$ and $c\in \GenKc$ such that $m+bc=1$. As $bc,m\in\GenK$, there exists $S\subseteq (0,1]$ such that $\restr{(bc)} S$ and $\restr m {\co S}$ are invertible \cite[Lemma 4.1]{HVIdeals}. Hence also $\restr b S$ is invertible. Suppose that $\restr a S$ is invertible. Then $\bar a a + \bar m m= \abs a^2 + \abs m^2 \in M$ would be invertible, a contradiction. By lemma \ref{inv-char}, there exists $T\in\Charsets$ with $T\subseteq S$ such that $\restr a T=0$. We conclude that $T\in Z(a)\setminus Z(b)$.
\end{proof}
\begin{cor}
$I\ideal \GenKc$ is a $z$-ideal iff
\[(\forall a\in \GenKc)(\forall b\in I)(Z(a) = Z(b)\implies a\in I).\]
\end{cor}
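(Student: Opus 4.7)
The corollary is an immediate consequence of Theorem \ref{thm_z-ideals_equiv}. The plan is simply to observe that applying the theorem in both directions yields the equivalence $\Max(a)=\Max(b)\iff Z(a)=Z(b)$, since
\[\Max(a)=\Max(b)\iff \bigl(\Max(a)\subseteq\Max(b) \text{ and } \Max(b)\subseteq\Max(a)\bigr)\]
and similarly for $Z$, and the two inclusions on each side match up via the theorem.

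With this in hand, the statement $(\forall a)(\forall b\in I)(\Max(a)=\Max(b)\implies a\in I)$ (the definition of $z$-ideal) and the statement $(\forall a)(\forall b\in I)(Z(a)=Z(b)\implies a\in I)$ have identical hypotheses for every pair $(a,b)$, and therefore the two universally quantified implications are logically equivalent. Thus $I$ is a $z$-ideal if and only if the $Z$-version holds.

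There is no real obstacle here; the work is all contained in Theorem \ref{thm_z-ideals_equiv}, which handles both directions of the inclusion-to-inclusion translation between $\Max$ and $Z$. The only thing to be careful about is noting that one does not need a separate argument for each direction of the equality: once the inclusion-level equivalence is established, the equality-level equivalence follows formally.
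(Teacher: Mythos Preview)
Your argument is correct and is exactly the intended one: the paper states this corollary without proof, as an immediate consequence of Theorem~\ref{thm_z-ideals_equiv}, and your derivation of the equality-level equivalence $\Max(a)=\Max(b)\iff Z(a)=Z(b)$ from the inclusion-level equivalence is precisely how one fills in the omitted step.
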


\begin{prop}\leavevmode
\begin{enumerate}
\item For $I\ideal\GenKc$,
\begin{align*}
\zclos{I}:&=\{x\in\GenKc: (\exists a\in I)(Z(x)= Z(a))\} =\{x\in\GenKc: (\exists a\in I)(Z(x)\supseteq Z(a))\}\\
&=\{x\in\GenKc: (\exists a\in I)(\Max(x)= \Max(a))\}
=\{x\in\GenKc: (\exists a\in I)(\Max(x)\supseteq \Max(a))\}
\end{align*}
is the smallest $z$-ideal containing $I$. We call it the $z$-closure of $I$. $I$ is a $z$-ideal iff $I=\zclos I$.
\item For $I\ideal\GenKc$, $I\subseteq\rad I\subseteq\zclos{I}$. Hence $\zclos{(\rad I)}=\zclos I$ and every $z$-ideal is radical.\\
A (proper) $z$-ideal is prime iff it is pseudoprime.
\end{enumerate}
\end{prop}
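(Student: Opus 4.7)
My plan for Part~1 is: (a) identify the $Z$- and $\Max$-formulations via Theorem~\ref{thm_z-ideals_equiv}; (b) identify the $=$- and $\supseteq$-formulations via a short construction; (c) verify that the common set $J$ is an ideal, a $z$-ideal, and the smallest such containing $I$. Part~2 will then follow formally from Theorem~\ref{prime-ideal-char} and the elementary identity $Z(x)=Z(x^n)$.

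For (a), Theorem~\ref{thm_z-ideals_equiv} directly identifies the $\Max$-formulations with the $Z$-formulations. For (b), the inclusion of the $=$-version in the $\supseteq$-version is trivial; for the reverse, given $a\in I$ with $Z(a)\subseteq Z(x)$, I take $b:=ax\in I$ and verify $Z(b)=Z(x)$. The inclusion $Z(x)\subseteq Z(b)$ is routine from moderateness of $a$. For $Z(b)\subseteq Z(x)$, suppose $\restr{(ax)}{S}=0$ and pick any $T\in\Charsets$ with $T\subseteq S$: if $\restr{x}{T}$ were invertible, then $\abs{x_\eps}\ge\eps^m$ on $T$ near $0$ by Lemma~\ref{inv-char}, so dividing the bound $\abs{a_\eps x_\eps}\le\eps^n$ by $\abs{x_\eps}$ would yield $\restr{a}{T}=0$, placing $T\in Z(a)\subseteq Z(x)$ in contradiction with invertibility of $\restr{x}{T}$. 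By Lemma~\ref{inv-char}.3 then $\restr{x}{S}=0$.

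For (c), writing $J$ for the common set, the ideal axioms follow by taking $a:=a_1^2+a_2^2\in I$ as witness for $x_1+x_2$ (since $Z(a)=Z(a_1)\cap Z(a_2)\subseteq Z(x_1+x_2)$) and by observing that any witness for $x$ also witnesses $xy$ for $y\in\GenKc$, because $Z(x)\subseteq Z(xy)$. The inclusion $I\subseteq J$ is trivial, and $J$ is a $z$-ideal since whenever $Z(y)=Z(x)$ the witness for $x$ transfers to $y$. For minimality, given a $z$-ideal $K\supseteq I$ and $x\in J$ with witness $a\in I\subseteq K$, the element $b=ax\in K$ from step~(b) satisfies $Z(b)=Z(x)$, so the $z$-ideal property of $K$ forces $x\in K$. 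The equivalence ``$I$ is a $z$-ideal iff $I=\zclos I$'' is then immediate.

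Part~2 is essentially formal. The inclusion $I\subseteq\rad I$ is tautological, and $\rad I\subseteq\zclos I$ follows from $Z(x)=Z(x^n)$ (an easy consequence of $\abs{x_\eps}^n\le\eps^m\iff\abs{x_\eps}\le\eps^{m/n}$): if $x^n\in I$, then $x^n$ witnesses $x\in\zclos I$. Chaining $\zclos I\subseteq\zclos{(\rad I)}\subseteq\zclos{(\zclos I)}=\zclos I$ yields $\zclos{(\rad I)}=\zclos I$, and any $z$-ideal satisfies $I=\zclos I\supseteq\rad I$, hence is radical. The last claim is then immediate from Theorem~\ref{prime-ideal-char}: radicality is automatic for a $z$-ideal, so pseudoprime becomes equivalent to prime. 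The only nonformal step in the whole proof is the $b=ax$ construction in step~(b), which relies essentially on Lemma~\ref{inv-char}.3.
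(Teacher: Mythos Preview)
Your argument is correct and follows the expected route (the paper itself simply cites \cite[Prop.~4.3]{HVIdeals} without further detail, so your explicit verification is welcome). One small correction: in step~(c) your witness $a:=a_1^2+a_2^2$ need not satisfy $Z(a)=Z(a_1)\cap Z(a_2)$ when $\K=\C$ (take $a_1=1$, $a_2=i$); replace it by $a:=\abs{a_1}^2+\abs{a_2}^2=\bar a_1 a_1+\bar a_2 a_2$, which works uniformly and still lies in $I$ since $\abs{a_j}\in I$ by \cite[Lemma~4.24]{BK}.
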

\begin{proof}
As in \cite[Prop.\ 4.3]{HVIdeals}.
\end{proof}

\begin{prop}\label{closed-is-z}
Every closed ideal $I\idealproper \GenKc$ is a $z$-ideal.
\end{prop}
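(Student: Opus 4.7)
The plan is to reduce immediately to Proposition \ref{intersection-of-max-ideals}. That proposition states that for any closed ideal $I\idealproper\GenKc$, we have the representation
\[I = \bigcap_{{I\subseteq M}\atop{M\text{ maximal}}} M,\]
and this is precisely the content needed to verify the $z$-ideal condition. Since closedness is converted into an intersection of maximal ideals, membership in $I$ becomes a statement about membership in all maximal ideals containing $I$, and this is exactly the kind of statement that $\Max(-)$ is designed to control.

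Concretely, I would take $a\in\GenKc$ and $b\in I$ with $\Max(a)=\Max(b)$ (in fact $\Max(a)\supseteq\Max(b)$ suffices, matching one of the equivalent forms in the preceding proposition). For any maximal ideal $M$ with $I\subseteq M$, we have $b\in I\subseteq M$, so $M\in\Max(b)\subseteq \Max(a)$, and hence $a\in M$. Therefore $a$ lies in every maximal ideal containing $I$, so by Proposition \ref{intersection-of-max-ideals}, $a\in \bigcap_{{I\subseteq M}\atop{M\text{ maximal}}} M = I$. This verifies the defining property of a $z$-ideal.

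There is no real obstacle here: the nontrivial work has already been absorbed into Proposition \ref{intersection-of-max-ideals}, which in turn rests on Theorem \ref{max-ideal} (maximal ideals are closed) together with the standard construction of a prime avoiding the multiplicatively closed set $E$ of elements invertible on $\co S$. The present statement is then a one-line corollary of that representation theorem.
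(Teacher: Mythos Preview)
Your proof is correct and follows essentially the same approach as the paper: both reduce to Proposition~\ref{intersection-of-max-ideals} to write $I$ as an intersection of maximal ideals, after which the $z$-ideal property is immediate. The only difference is that the paper cites \cite{Mason73} for the general fact that any intersection of maximal ideals is a $z$-ideal, whereas you spell out that short argument explicitly.
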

\begin{proof}
$I$ is an intersection of maximal ideals (proposition \ref{intersection-of-max-ideals}), hence a $z$-ideal \cite{Mason73}.
\end{proof}

\begin{prop}\label{z-part}\leavevmode
\begin{enumerate}
\item For a family $(I_\lambda)_{\lambda\in\Lambda}$ of ideals $I_\lambda\ideal\GenKc$, $\zclos{(\sum_{\lambda\in\Lambda} I_\lambda)}=\sum_{\lambda\in\Lambda} \zclos{(I_\lambda)}$. In particular, the sum of a family of $z$-ideals is a $z$-ideal.
\item For $I,J\ideal\GenKc$, $\zclos{I}\cap \zclos{J}=\zclos{(I\cap J)}$.
\item For $I\ideal\GenKc$, $\zpart I:=\{x\in\GenKc: \zclos{(x\GenKc)}\subseteq I\}$ is the largest $z$-ideal contained in $I$. We call it the $z$-part of $I$. $I$ is a $z$-ideal iff $I=\zpart I$.
\item For a family $(I_\lambda)_{\lambda\in\Lambda}$ of ideals $I_\lambda\ideal\GenKc$, $\bigcap_{\lambda\in\Lambda} \zpart I_\lambda=\zpart{(\bigcap_{\lambda\in\Lambda} I_\lambda)}$. In particular, the intersection of a family of $z$-ideals is a $z$-ideal.
\item For $I\ideal\GenKc$, $m(I)\subseteq\zpart I\subseteq \radpart I\subseteq I$. In particular, every pure ideal of $\GenKc$ is a $z$-ideal. If $I\idealproper\GenKc$ is pseudoprime, then $\zpart I$ is prime.
\end{enumerate}
\end{prop}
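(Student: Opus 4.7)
The plan is to mirror the analogous development in \cite{HVIdeals} for $\GenK$, adapting it to $\GenKc$ via Theorem~\ref{thm_z-ideals_equiv} and Lemma~\ref{zero-product}. I would treat part~2 first (it is independent of the others), then the technically hardest part~1, and deduce parts~3--5 from these.

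For part~2, the inclusion $\zclos{(I\cap J)}\subseteq \zclos{I}\cap \zclos{J}$ is monotonicity of the $z$-closure operation. For the converse, if $x\in \zclos{I}\cap \zclos{J}$ pick $a\in I$, $b\in J$ with $Z(x)\supseteq Z(a)$ and $Z(x)\supseteq Z(b)$; then $ab\in I\cap J$, and I claim $Z(ab)\subseteq Z(x)$. Given $S\in Z(ab)$, Lemma~\ref{zero-product} produces closed $T,U$ with $S\subseteq \inter T\cup \inter U$, $\restr{a}{T}=0$ and $\restr{b}{U}=0$; then $T\in Z(a)\subseteq Z(x)$ and $U\in Z(b)\subseteq Z(x)$, so $\restr{x}{T\cup U}=0$, whence $\restr{x}{S}=0$.

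Part~1 is the main obstacle, specifically the inclusion $\zclos{(\sum_\lambda I_\lambda)}\subseteq \sum_\lambda \zclos{I_\lambda}$; the reverse is monotonicity plus the fact that the right-hand side is an ideal. Given $x\in \zclos{(\sum_\lambda I_\lambda)}$, pick $a\in \sum_{i=1}^n I_{\lambda_i}$ with $Z(x)\supseteq Z(a)$; replacing each $a_i$ by $\abs{a_i}\in I_{\lambda_i}$ I may assume $a_i\ge 0$ and $a=\sum_i a_i\ge 0$, so $Z(a)=\bigcap_i Z(a_i)$. The goal is to write $x=x_1+\cdots+x_n$ with $x_i\in \zclos{I_{\lambda_i}}$, adapting the classical Gillman--Jerison proof that the sum of $z$-ideals in $C(X)$ is a $z$-ideal. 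The naive candidate is $x_i:=x\,a_i^2/(a_1^2+\cdots+a_n^2)$ extended by $0$ where the denominator vanishes; in the present asymptotic setting this ratio need not give a continuous moderate representative, because pointwise zeros of $\sum_j a_j^2$ away from $\eps=0$ are not controlled by $Z(x)\supseteq Z(a)$. I would resolve this by means of a quantitative refinement of $Z(a)\subseteq Z(x)$: a bad-subsequence extraction shows that for every $n\in\N$ there exist $m\in\N$ and $\delta>0$ with $a_\eps\le \eps^m\Rightarrow \abs{x_\eps}\le \eps^n$ on $(0,\delta)$. Combining the ratio with a Urysohn cutoff separating the bad region $\{a\le \eps^m\}$, where $x$ is already negligible to the prescribed order, from the good region $\{a\ge 2\eps^m\}$, where the ratio is unproblematic, and diagonalizing in $n$, yields the desired decomposition in $\GenKc$. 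This splitting is the technical heart of the argument, mirroring the construction in \cite[Prop.~4.3]{HVIdeals}.

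For part~3, I verify in turn that $\zpart{I}\subseteq I$ (since $x\in x\GenKc\subseteq \zclos{(x\GenKc)}\subseteq I$); that $\zpart{I}$ is an ideal, where multiplicative absorption follows from $zx\GenKc\subseteq x\GenKc$ together with monotonicity and additive closure uses part~1 via $\zclos{((x+y)\GenKc)}\subseteq \zclos{(x\GenKc+y\GenKc)}=\zclos{(x\GenKc)}+\zclos{(y\GenKc)}\subseteq I$; that $\zpart{I}$ is a $z$-ideal, because for $b\in \zpart{I}$ with $Z(a)=Z(b)$, Theorem~\ref{thm_z-ideals_equiv} gives $\Max(a)=\Max(b)$, hence $\Max(ad)=\Max(a)\cup \Max(d)=\Max(bd)$ and $Z(ad)=Z(bd)$ for every $d\in \GenKc$, so $\zclos{(a\GenKc)}=\zclos{(b\GenKc)}\subseteq I$; and maximality, since any $z$-ideal $J\subseteq I$ and $x\in J$ has $x\GenKc\subseteq J$ and $\zclos{(x\GenKc)}\subseteq J\subseteq I$. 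Part~4 is formal: intersections of $z$-ideals are $z$-ideals directly from the definition, so $\bigcap_\lambda \zpart{I_\lambda}$ is a $z$-ideal inside $\bigcap_\lambda I_\lambda$ and lies in $\zpart{(\bigcap_\lambda I_\lambda)}$, while the reverse follows from maximality of each $\zpart{I_\mu}$ inside $I_\mu$. For part~5, $\radpart{I}\subseteq I$ is the definition and $\zpart{I}\subseteq \radpart{I}$ because $z$-ideals are radical (previous proposition); every pure ideal is a $z$-ideal (if $P=I(\Filter(P))$ by Theorem~\ref{pure-char}, $b\in P$ and $Z(a)=Z(b)$, pick $S\in \Filter(P)$ with $\restr{b}{S}=0$; then $S\in Z(b)=Z(a)$, so $\restr{a}{S}=0$ and $a\in I(\Filter(P))=P$), hence $\pure(I)\subseteq \zpart{I}$; finally, if $I$ is pseudoprime then $\pure(I)$ is prime, so $\zpart{I}\supseteq \pure(I)$ is pseudoprime and, being a $z$-ideal, is prime by the preceding proposition.
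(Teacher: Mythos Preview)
Your treatment of parts 2--5 is correct and matches what the paper does (it simply refers to \cite[Prop.~4.4]{HVIdeals} for 2--4 and gives essentially your argument for~5).

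For part~1 your route diverges from the paper's, and your sketch is more complicated than it needs to be. The worry that $x\,a_i^2/\sum_j a_j^2$ fails to be a continuous moderate representative disappears once you pick strictly positive continuous representatives $\alpha_{j,\eps}>0$ of $\abs{a_j}$ (which the paper also does): then $\psi_i:=\alpha_i^2/\sum_j\alpha_j^2$ is continuous with values in $[0,1]$, so $x_i:=x\psi_i\in\GenKc$ and $\sum_i x_i=x$ automatically. Your quantitative refinement then gives $Z(a_i)\subseteq Z(x_i)$ directly, with no Urysohn cutoff and no diagonalization: for fixed $p$, pick $m,\delta$ with $a_\eps\le\eps^m\Rightarrow\abs{x_\eps}\le\eps^p$ on $(0,\delta)$; on $\{a_\eps\le\eps^m\}$ one has $\abs{x_{i,\eps}}\le\abs{x_\eps}\le\eps^p$, while on $\{a_\eps>\eps^m\}$ one has $\sum_j\alpha_{j,\eps}^2\ge(\eps^m/n)^2$, hence $\abs{x_{i,\eps}}\le n^2\eps^{-M-2m}\alpha_{i,\eps}^2$, which is $\le\eps^p$ for small $\eps\in W\in Z(a_i)$. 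Your proposed diagonalization (letting $m$ vary with $\eps$) is both unnecessary and risky: if the cutoff exponent $m_N$ grows with $N$, the good-region estimate $\alpha_i^2\eps^{-2m_N}$ need not be uniformly negligible on a fixed $W\in Z(a_i)$.

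The paper's argument for part~1 is different and avoids the quantitative lemma altogether. It treats the case $n=2$ (and then inducts), picks strictly positive representatives $\alpha_\eps,\beta_\eps$ of $\abs a,\abs b$, and compares them directly: with $S=\{\alpha<2\beta\}$, $T=\{\beta<2\alpha\}$ one has $S\cup T=(0,1]$, so Lemma~\ref{prec-union} and Urysohn produce $y,z\in\GenRc$ with $\restr y{\co S}=0$, $\restr z{\co T}=0$, $y+z\ge1$; setting $u=(y+z)^{-1}$, the splitting is $x=xyu+xzu$. The verification $Z(a)\subseteq Z(xzu)$ is then immediate: if $\restr aW=0$ then on $T\cap W$ one has $\abs b<2\abs a$ negligible, hence $T\cap W\in Z(a+b)\subseteq Z(x)$, while on $W\setminus T$ the factor $z$ vanishes. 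This is shorter and sidesteps the delicate estimate entirely.
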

\begin{proof}
1. First, we show that $\zclos{(I+J)}=\zclos I + \zclos J$.\\
Let $x\in \zclos{(I+J)}$. Hence there exist $a\in I$, $b\in J$ such that $Z(x)=Z(a+b)$. Let $(\alpha_\eps)_\eps$, resp.\ $(\beta_\eps)_\eps$, be representatives of $\abs a$, resp.\ $\abs b$, with $\alpha_\eps\ne 0$ and $\beta_\eps\ne 0$ for all $\eps$. Let $S:=\{\eps\in (0,1]: \alpha_\eps< 2\beta_\eps\}$ and $T:=\{\eps\in (0,1]: \beta_\eps< 2\alpha_\eps\}$. As $\alpha_\eps\ne 0$ and $\beta_\eps\ne 0$, $S\cup T=(0,1]$. By lemma \ref{prec-union}, there exist $V\prec S$, $U\prec T$ such that $U\cup V=(0,1]$. By Urysohn's lemma, there exists $y,z\in\GenRc$ such that $\restr y V = 1$, $\restr y {\co S} = 0$, $\restr z U = 1$ and $\restr z {\co T} = 0$ and $0\le y,z\le 1$. Then $y+ z\ge 1$. Hence there exists $u\in\GenRc$ such that $(y+z)u=1$.\\
Now let $W\in Z(a)$, i.e., $\restr a W = 0$. As $\restr{\abs b} T\le 2\restr{\abs a}T$, also $\restr b {T\cap W} = 0$. Hence $T\cap W\in Z(a+b)=Z(x)$, i.e.\ $\restr x {T\cap W} = 0$. Hence $\restr{xzu} {W} = \restr{xzu} {(W\cap T) \cup (W\setminus T)}=0$. Thus $Z(a)\subseteq Z(xzu)$. As $a\in I$, $xzu\in \zclos I$. Similarly, $xyu\in \zclos J$. Hence $x= xyu + xzu \in \zclos I + \zclos J$.\\
For arbitrary sums, the result follows as in \cite[Prop.\ 4.4]{HVIdeals}.

2--4. As in \cite[Prop.\ 4.4]{HVIdeals}.

5. We show that $\pure(I)\subseteq \zpart I$. Let $x\in \pure(I)=I(\Filter(I))$. Then there exists $S\in \Filter(I)$ such that $\restr x S = 0$. Let $y\in \zclos{(x\GenKc)}$. Then also $\restr y S = 0$, so $y\in I(\Filter(I)) \subseteq I$. Thus $\zclos{(x\GenKc)}\subseteq I$. The other statements follow as in \cite[Prop.\ 4.4]{HVIdeals} (using \cite[Prop.\ 4.29]{BK}).
\end{proof}
\begin{rem}
There are $z$-ideals that are not closed (e.g., consider a minimal prime ideal that is not maximal).
\end{rem}

It is well known that $\GenK$ is complete for the sharp topology \cite{Scarpa93}. Similarly, we have:
\begin{thm}
$\GenKc$ is complete for the sharp topology.
\end{thm}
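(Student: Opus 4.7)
The plan is to leverage the known completeness of $\GenK$ for the sharp topology and then to show that any sharp limit of a sequence in $\GenKc$ admits a continuous representative. Since the sharp topology is metrizable via the ultrametric $d$, it suffices to show that every Cauchy sequence $(x_n)_n$ in $\GenKc$ has a limit in $\GenKc$. By completeness of $\GenK$ \cite{Scarpa93}, $(x_n)_n$ already admits a limit $x\in\GenK$; the whole content of the theorem is therefore the statement that $x$ has a continuous representative.

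First I would extract a subsequence, still denoted $(x_n)_n$, such that $v(x_{n+1}-x_n)>n$ for every $n$, and pick continuous representatives $(y_{n,\eps})_\eps$ of $x_n$. By definition of $v$, I can choose a strictly decreasing sequence $\eps_n\searrow 0$ with
\[
\abs{y_{n+1,\eps}-y_{n,\eps}}\le \eps^n\qquad\text{for all }\eps\in(0,\eps_n].
\]
The idea is to construct a continuous representative $(z_\eps)_\eps$ of the limit by patching the $y_{n,\eps}$ together on the intervals $(\eps_{n+1},\eps_n]$ using a continuous partition-of-unity argument (Urysohn's lemma), thereby avoiding jumps at the gluing points.

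Concretely, for each $n$ I would use Urysohn to produce a continuous $\phi_n\colon(0,1]\to[0,1]$ equal to $1$ on $(0,\eps_{n+1}]$ and to $0$ on $[\eps_n,1]$, and define
\[
z_\eps := y_{1,\eps}+\sum_{n\ge 1}\phi_n(\eps)\bigl(y_{n+1,\eps}-y_{n,\eps}\bigr).
\]
At each fixed $\eps>0$ only finitely many $\phi_n(\eps)$ are nonzero (the ones with $\eps_n>\eps$), so the series is a finite sum locally around each point; hence $z_\eps$ is continuous on $(0,1]$. On $(\eps_{k+1},\eps_k]$ one checks $z_\eps=(1-\phi_k(\eps))y_{k,\eps}+\phi_k(\eps)y_{k+1,\eps}$, a convex combination of $y_{k,\eps}$ and $y_{k+1,\eps}$.

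Next I would verify the asymptotic estimates. For $\eps\le\eps_n$, the formula gives $\abs{z_\eps-y_{n,\eps}}\le\sum_{k\ge n}\eps^k\le 2\eps^n$, so $(z_\eps)_\eps$ differs from the moderate net $(y_{n,\eps})_\eps$ by an $O(\eps^n)$ term; in particular $(z_\eps)_\eps\in\Mod_\K\cap\Cnt((0,1])$, so $z:=[z_\eps]\in\GenKc$. The same estimate shows $v(z-x_n)\ge n$ for every $n$, hence $x_n\to z$ in the sharp topology, and by uniqueness of limits in $\GenK$ we get $z=x\in\GenKc$. The only real subtlety — which the Urysohn gluing is designed to handle — is preserving continuity across the patching points $\eps_n$, since a naive definition $z_\eps=y_{n,\eps}$ on $(\eps_{n+1},\eps_n]$ would introduce jumps of size up to $\eps_n^{n-1}$; the partition-of-unity interpolation turns these jumps into continuous transitions while keeping them negligible.
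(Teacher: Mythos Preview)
Your proposal is correct and is essentially the paper's own argument: both reduce to showing $\GenKc$ is closed in the complete space $\GenK$ and build a continuous representative as a locally finite telescoping sum $y_{1,\eps}+\sum_n \phi_n(\eps)(y_{n+1,\eps}-y_{n,\eps})$ with continuous cutoffs supported in $(\eps_{n+1},\eps_n)$. The paper writes the increments as $u_{n,\eps}$ and simply says ``in such a way that $u_{n,\eps}$ is continuous and $\abs{u_{n,\eps}}\le\abs{x_{n,\eps}-x_{n-1,\eps}}$'', which is exactly your Urysohn interpolation with a shifted index; the estimates and conclusion are the same.
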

\begin{proof}
Since $\GenKc\subseteq \GenK$ and $\GenK$ is complete, we show that $\GenKc$ is closed in $\GenK$.
Let $x_n\in\GenKc$ with continuous representative $(x_{n,\eps})_\eps$ such that $x_n\to x\in\GenK$. By taking a subsequence, we may assume that for each $n\in\N$,
\[\abs{x_{n,\eps}-x_{\eps}}\le\eps^n,\quad \forall \eps\le\eps_n.\]
W.l.o.g., $(\eps_n)_n$ is strictly decreasing and tends to $0$. Then let $u_{1,\eps}:= x_{1,\eps}$ and
\[
u_{n,\eps}:= 
\begin{cases}
x_{n,\eps} - x_{n-1,\eps}&\eps\le \eps_{n+1}\\
0,& \eps >\eps_n
\end{cases}
\]
in such a way that $u_{n,\eps}$ is continuous in $\eps$ and $\abs{u_{n,\eps}}\le \abs{x_{n,\eps} - x_{n-1,\eps}}$ for each $\eps\in (0,1]$. Then $s_\eps := \sum_{n=1}^\infty u_{n,\eps}$ is a locally finite sum. Hence $(s_\eps)_\eps$ is continuous and for each $\eps\in (\eps_{n+1},\eps_n]$,
\[
\abs{s_\eps-x_\eps} = \abs[\Big]{\sum_{k=1}^n u_{k,\eps} - x_\eps} \le \abs{u_{n,\eps}} +\abs {x_{n-1,\eps}-x_\eps}\le \abs{x_{n,\eps}-x_\eps} + 2\abs{x_{n-1,\eps}-x_\eps}\le 3\eps^{n-1}.
\]
Hence $x=[s_\eps]\in\GenKc$.
\end{proof}

\begin{thm}\label{thm_closed_fin_gen}
Let $I\ideal\GenKc$ be a finitely generated ideal.
\begin{enumerate}
\item If $I$ is radical (in particular, if $I$ is closed, pure or a $z$-ideal), then $I\in\{0,\GenKc\}$.
\item $\zclos I=\clos I$
\item $m(I)=\zpart I$.
\end{enumerate}
\end{thm}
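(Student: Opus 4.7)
The plan is to invoke the Bezout property of $\GenKc$ (every finitely generated ideal is principal) to write $I=(a)$ for a single $a\in\GenKc$, and then treat each part separately on the principal ideal.

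\emph{Part 1.} Assume $I$ is radical, proper, and nonzero; I derive a contradiction. Then $a$ is neither negligible nor invertible. Set $b:=|a|^{1/2}\in\GenRc$: this is continuous and moderate, and $b^2=|a|\in I$, so by radicality $b\in I$. Writing $b=ad$ with a representative $|d_\eps|\le\eps^{-N}$, the relation $|a_\eps|^{1/2}\approx|a_\eps||d_\eps|$ (modulo negligibles) forces the dichotomy: for every $m\in\N$ there is $\delta>0$ such that, for $\eps\le\delta$, either $|a_\eps|\le 4\eps^{2m}$ or $|a_\eps|\ge\eps^{2N}/4$. Non-negligibility of $a$ provides arbitrarily small $\eps$ in the "large" case and non-invertibility provides arbitrarily small $\eps$ in the "small" case; continuity of $a_\eps$ and the intermediate value theorem then produce $\eps$ with $|a_\eps|$ in the forbidden interval $(4\eps^{2m},\eps^{2N}/4)$, a contradiction. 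The "in particular" clause follows because closed, pure, and $z$-ideals are radical (lemmas \ref{closed-is-radical}, \ref{pure-is-radical}).

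\emph{Part 2.} $\clos I$ is a $z$-ideal containing $I$ (proposition \ref{closed-is-z}), so $\zclos I\subseteq\clos I$. For the reverse I use the identification $\zclos{(a)}=\{x:Z(x)\supseteq Z(a)\}$ and show that every $x\in\clos I$ satisfies $Z(a)\subseteq Z(x)$. Suppose $S\in Z(a)$ with $\restr x S\ne 0$; lemma \ref{inv-char} yields a characteristic $T\subseteq S$ with $\restr x T$ invertible, and proposition \ref{zero-inv-ext} extends $T$ to an open $T_0\succ T$ with $\restr x{T_0}$ invertible. Since $\clos I$ is proper (the invertibles form an open set in the sharp topology disjoint from $I$), $x$ is not invertible, which forces $\co{T_0}$ to be a closed characteristic set. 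Theorem \ref{clos-ideal-char} then gives $\co{T_0}\in\Filter(I)$, so $\restr a{T_0}$ is invertible; but $T\subseteq T_0\cap S$ is characteristic with $\restr a T=0$, a contradiction.

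\emph{Part 3.} The inclusion $m(I)\subseteq\zpart I$ is proposition \ref{z-part}.5. For the reverse, take $x\in\zpart I$; by part 2, $\clos{(x)}=\zclos{(x)}\subseteq I=(a)$, and I aim to produce a closed characteristic $T$ with $\restr x T=0$ and $\restr a{\co T}$ invertible, placing $x\in I(\Filter(I))=m(I)$. The plan is to exploit the supply of elements $y\in\clos{(x)}\subseteq(a)$: bounded cutoffs $y_\eps=\eps^K\psi(|x_\eps|/\eps^n)$ (with $\psi$ smooth, $\psi(t)=0$ for $t\le 1/2$, $\psi(t)=1$ for $t\ge 1$) satisfy $Z(y)\supseteq Z(x)$ and $|y|\le\eps^K$, so any factorization $y=ab$ with $|b_\eps|\le\eps^{-M}$ yields $|a_\eps|\ge c\eps^{K+M}$ on $\{|x_\eps|\ge\eps^n\}$. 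The main obstacle is to extract such bounds with the exponent $K+M$ uniform in $n$, so that a single set $T=\{|a_\eps|\le\eps^L\}$ simultaneously certifies $\restr a{\co T}$ invertible (by construction) and $\restr x T=0$ (by the uniform bound applied to every $n$); this step is the most delicate and parallels the construction used for $z$-ideals in $\GenK$ in \cite{HVIdeals}.
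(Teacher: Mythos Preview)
Your Parts 1 and 2 are correct but differ from the paper's arguments. For Part~1 the paper invokes \cite[Prop.~4.28]{BK} to see that a radical principal ideal is idempotent, so $a=a^2b$ for some $b$, whence $ab$ is an idempotent of $\GenKc$ and therefore $ab\in\{0,1\}$; your square-root/IVT argument is a valid, more self-contained alternative. For Part~2 the paper argues directly: if $x=\lim_n x_n$ with $x_n\in(a)$ and $S\in Z(a)$, then $\restr{x_n}{S}=0$ for every $n$, and passing to the limit in the sharp topology gives $\restr{x}{S}=0$, so $Z(a)\subseteq Z(x)$. Your detour through $\Filter(I)$ and Theorem~\ref{clos-ideal-char} works but is considerably heavier than this one-line limit argument.

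Part 3, however, has a genuine gap. The uniformity obstacle you name is real and is not resolved by your sketch: each cutoff $y_n\in\clos{(x)}\subseteq(a)$ yields a factorization $y_n=ab_n$ with its own moderateness exponent $M_n$, and nothing in the hypothesis $\zclos{(x\GenKc)}\subseteq I$ forces the $M_n$ to be bounded; appealing vaguely to \cite{HVIdeals} does not close this. The paper avoids the issue entirely by proving the contrapositive. Assuming $x\notin\pure(I)=I(\Filter(I))$, one has $\restr{x}{\co{L_n}}\ne 0$ for each level set $\co{L_n}=\{\eps:|a_\eps|\le\eps^n\}\in\Filter(I)$, hence characteristic $T_n\subseteq\co{L_{n+1}}$ (extending to $S_n\prec\co{L_n}$) with $\restr{x}{S_n}$ invertible. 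Urysohn gives $y_n\in\GenKc$ with $\restr{y_n}{T_n}=\restr{\sqrt{|a|}}{T_n}$, $\restr{y_n}{\co{S_n}}=0$, $0\le y_n\le\sqrt{|a|}$; since $\restr{|y_n|}{S_n}\le\caninf^{n/2}$, the series $y=\sum_n y_n$ converges in $\GenKc$. Then $Z(x)\subseteq Z(y)$ (any $U\in Z(x)$ meets each $S_n$ non-characteristically, so $\restr{y_n}{U}=0$), while on $T_n$ one has $|y|\ge\sqrt{|a|}\ge\caninf^{-n/2}|a|$, ruling out $|y|\le\caninf^{-N}|a|$ for any fixed $N$; thus $y\in\zclos{(x\GenKc)}\setminus I$. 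The idea you are missing is to build a \emph{single} witness in $\zclos{(x\GenKc)}$ escaping $(a)$, rather than trying to extract a uniform bound from infinitely many separate factorizations.
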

\begin{proof}
By \cite[Lemma 4.5]{BK}, $I$ is principal, i.e.\ $I=a\GenKc$ for some $a\in\GenKc$.

1. By \cite[Prop.\ 4.28]{BK}, $I$ is idempotent. Hence $a=a^2 b$ for some $b\in\GenKc$. Thus $ab$ is idempotent. So either $ab=0$, whence $a=a^2 b = 0$ and $I=0$, or $ab=1$, whence $I = \GenKc$.

2. Let $x\in \clos I$, i.e.\ $x=\lim_{n\to\infty} x_n$ for some $x_n\in I$. Let $S\in Z(a)$, i.e.\ $\restr a S=0$. Then also $\restr{x_n}S=0$ for each $n\in\N$, hence also $\restr x S=0$, i.e.\ $S\in Z(x)$. Thus $x\in \zclos I$. The converse inclusion holds by proposition \ref{closed-is-z}.

3. Let $x\in \GenKc\setminus \pure(I) = I(\Filter(I))$. Then for each $S\in\Filter(I)$, $\restr x S\ne 0$. In particular, let $(a_\eps)_\eps$ be a (continuous) representative of $a$ and $L_n:=\{\eps\in (0,1]: \abs{a_\eps}> \eps^n\}$. Then $\co L_n\in\Filter(I)$, so $\restr x {\co L_n}\ne 0$. By lemma \ref{inv-char}, there exist $T_n\in\Charsets$ with $T_n\subseteq \co L_{n+1}$ and $\restr x {T_n}$ is invertible. By lemma \ref{zero-inv-ext}, there exist $S_n\succ T_n$ such that $\restr x {S_n}$ is invertible (as $T_n\prec \co L_n$, we may assume $S_n\prec\co L_n$). By Urysohn's lemma, there exist $y_n\in\GenKc$ with $\restr{y_n} {T_n} = \restr{(\sqrt{\abs a})}{T_n}$, $\restr {y_n}{\co S_n} = 0$ and $0\le y_n\le \sqrt{\abs a}$. As $\restr{\abs{y_n}}{S_n}\le \restr{\sqrt{\abs a}}{S_n}\le\caninf^{n/2}$, $y:=\sum_{n=1}^\infty y_n\in\GenKc$ exists ($\GenKc$ is a complete ultrametric space). We show that $y\in \zclos{(x\GenKc)}$.\\
Let $U\in Z(x)$, i.e., $\restr x U=0$. Then $0\notin \clos{U\cap S_n}$, since $\restr x {S_n}$ is invertible. Hence $\restr{y_n} U = 0$. Then also $\restr y U = 0$, i.e., $U\in Z(y)$.\\
Also $y\notin I$: $\restr{\abs y} {T_n}\ge \restr{\abs{y_n}} {T_n} = \restr{\sqrt{\abs a}}{T_n}\ge \restr{(\caninf^{-n/2}\abs a )}{T_n}$ for each $n\in\N$. Hence $\abs{y}\nleq \caninf^{-N}\abs a$ for any $N\in\N$, and thus $y\notin I$.
Hence $\zclos{(x\GenKc)}\not\subseteq I$, i.e., $x\notin \zpart I$.
\end{proof}

Let $I\idealproper\GenKc$. Let $I^\perp=\{x\in\GenKc: xy=0, \forall y\in I\}$. As in $\GenK$, we have:
\begin{prop}
Let $I\idealproper \GenKc$. Then
\begin{enumerate}
\item $I^\perp$ is closed.
\item $\clos I \subseteq I^{\perp\perp}$.
\item $\clos I\cap I^\perp=\{0\}$.
\item If $I$ is pseudoprime, then $I^\perp=\{0\}$. In particular, $\clos I\subsetneqq I^{\perp\perp} = \GenKc$.
\end{enumerate}
\end{prop}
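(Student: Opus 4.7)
Parts (1)--(3) are formal. For (1), I plan to write $I^\perp=\bigcap_{y\in I}\{x\in\GenKc:xy=0\}$ and observe that each set in the intersection is the preimage of the closed singleton $\{0\}$ under continuous multiplication by $y$, hence closed. Part (2) is then immediate: $I\subseteq I^{\perp\perp}$ directly from the definition of the annihilator, and $I^{\perp\perp}$ is closed by (1), so $\clos I\subseteq I^{\perp\perp}$. For (3), if $x\in\clos I\cap I^\perp$, then by (2) $x\in I^{\perp\perp}\cap I^\perp$, whence $x\cdot x=0$; and $\GenKc$ is reduced (since $(x_\eps)^2\in\Null_\K$ forces $(x_\eps)\in\Null_\K$), so $x=0$.

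The substantive content is (4). I will reduce to the prime case via the earlier equivalence ``$I$ pseudoprime iff $I$ contains a prime'': let $P\subseteq I$ be prime; since $I^\perp\subseteq P^\perp$, it suffices to prove $P^\perp=\{0\}$. Arguing by contradiction, assume $x\in P^\perp$ with $x\ne 0$. Since $(x_\eps)\notin\Null_\K$, for some $n\in\N$ the set $S:=\{\eps\in(0,1]:\abs{x_\eps}\ge\eps^{n+1}\}$ is closed and characteristic, and by Lemma \ref{inv-char} $\restr x S$ is invertible. Choose $z\in\GenKc$ with $\restr{xz} S=1$; for any $y\in P$ we have $xy=0$, and then $\restr y S=\restr{y\cdot xz} S=\restr{(xy)z} S=0$. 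Hence $P\subseteq J_S:=\{y\in\GenKc:\restr y S=0\}$.

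The contradiction will come from producing zero-divisors outside $P$. Since $S\in\ClosedCharsets$, pick a sequence $\eps_k\in S$ with $\eps_k\to 0$ and split it into $S_1:=\{\eps_{2k}\}_k$, $S_2:=\{\eps_{2k+1}\}_k$; both are disjoint closed characteristic subsets of $S$ (their only accumulation point is $0\notin(0,1]$). Normality of $(0,1]$ supplies disjoint open $V_j\supseteq S_j$, and Urysohn's lemma delivers continuous $a,b:(0,1]\to[0,1]$ with $a=1$ on $S_1$, $a=0$ on $\co V_1$, and symmetrically for $b$. Then $a,b\in\GenKc$ are nonzero (each takes value $1$ on a characteristic set), $ab\equiv 0$ everywhere (disjoint supports in $V_1$ and $V_2$), and neither lies in $P\subseteq J_S$, because $\restr a{S_1}=1\ne 0$ while every $p\in J_S$ satisfies $\restr p{S_1}=0$; similarly $b\notin P$. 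This contradicts primality of $P$, so $P^\perp=\{0\}$. The ``in particular'' clause follows: $I^\perp=\{0\}$ gives $I^{\perp\perp}=\GenKc$, while $\clos I\ne\GenKc$ because the open set of units is disjoint from $I$ and hence from $\clos I$.

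The main obstacle I anticipate is in step (4): first the containment $P\subseteq J_S$ via the local-inverse manipulation, and second the verification that the Urysohn-constructed $a,b$ escape $P$ itself, not merely $J_S$ --- which is exactly what the containment $P\subseteq J_S$ buys us.
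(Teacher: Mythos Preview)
Your arguments for (1)--(3) are essentially the same as the paper's (you phrase (1) via preimages of closed sets under continuous multiplication, the paper uses a sequential argument; these are equivalent in this metric setting).

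For (4) your route is correct but genuinely different from the paper's. The paper works directly with the pseudoprime ideal $I$: it uses that $\Filter(I)$ is pseudoprime (Proposition~\ref{pseudoprime-ideal-char}), takes $T\prec S$ with $\restr{x}{S}$ invertible, and applies pseudoprimality of $\Filter(I)$ to the cover $\inter{(\co T)}\cup\inter S=(0,1]$; one branch contradicts $xy=0$ directly, while the other forces $xz$ to be a nontrivial idempotent in $\GenKc$, which is impossible. Your argument instead passes to a prime $P\subseteq I$ (via the equivalence ``pseudoprime $\Leftrightarrow$ contains a prime''), shows $P\subseteq J_S=\{y:\restr{y}{S}=0\}$ from the local-inverse identity, and then manufactures explicit zero-divisors $a,b$ with $ab=0$ and $a,b\notin J_S\supseteq P$ by splitting a sequence in $S$. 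This is a clean, self-contained contradiction to primality that avoids both the filter machinery and the idempotent classification at the point of use. Note, incidentally, that your reduction to a prime is not even needed: the same containment argument gives $I\subseteq J_S$ directly (for any $y\in I$ and $x\in I^\perp$ one has $xy=0$, hence $\restr{y}{S}=0$), and then $ab=0$ with $a,b\notin I$ already contradicts pseudoprimality of $I$. The paper's approach ties the result to the structural theme (no idempotents, filter calculus); yours is more hands-on and portable.
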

\begin{proof}
1. Let $x=\lim_{n\to\infty} x_n$, with $x_n\in I^\perp$. Then $x_n y=0$, $\forall n\in\N$, hence also $xy=0$, $\forall y\in I$. Thus $x\in I^\perp$.

2. If $x\in I$, then $xy=0$, $\forall y\in I^\perp$, so $I \subseteq I^{\perp\perp}$. By part 1, also $\clos I\subseteq I^{\perp\perp}$.

3. If $x\in I\cap I^\perp$, then $x^2=0$, hence $x=0$. Hence also $I^\perp\cap \clos I\subseteq I^\perp\cap I^{\perp\perp}=\{0\}$.

4. Let $x\in I^\perp$. If $x\ne 0$, then there exists $T\in\Charsets$ such that $\restr x T$ is invertible. By lemma \ref{zero-inv-ext}, there exists $S\succ T$ such that $\restr x S$ is invertible. W.l.o.g.\ $S$ is closed, $T$ is open and $\co S\in\Charsets$. As $\inter{(\co T)} \cup\inter S= (0,1]$ and $\Filter(I)$ is pseudoprime, either $\co{T} \in\Filter(I)$ or $S\in\Filter(I)$. In the first case, there exists $y\in I$ such that $\restr{y}{ T}=1$. As $x\in I^\perp$, $xy=0$, contradicting the fact that $\restr{(xy)}{T}$ is invertible.
In the second case, there exists $y\in I$ such that $\restr y {\co S} = 1$. Hence $xy=0$, and thus $\restr x{\co S} = 0$. As $\restr {(xz)} S=1$ for some $z\in\GenKc$, and $\restr{(xz)}{\co S}=0$, $xz\in\GenKc$ is idempotent, and hence $xz=0$ (contradicting $\restr{(xz)}S=1$) or $xz=1$ (contradicting $\restr{(xz)}{\co S}=0$).  Thus $x=0$.
\end{proof}

\begin{lemma}\label{HB-lemma}
There exists $J\idealproper\GenKc$ such that $J\ne \{0\}$ and $J^\perp\ne\{0\}$.
\end{lemma}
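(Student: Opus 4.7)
The plan is to exhibit two nonzero elements $a,b\in\GenKc$ whose product is zero, and then take $J:=a\GenKc$. Such $J$ is automatically nonzero (it contains $a$) and $b\in J^\perp$ because $b\cdot(ac)=(ab)c=0$ for every $c\in\GenKc$; moreover $J$ will be proper because $a$ is not invertible. So the entire problem reduces to producing a pair of continuous, moderate representatives whose pointwise product is identically zero but neither of which is negligible.

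For the construction, define continuous bounded functions on $(0,1]$ by
\[
a(\eps):=\max(\sin(\pi/\eps),0),\qquad b(\eps):=\max(-\sin(\pi/\eps),0).
\]
Both are bounded by $1$, hence lie in $\Mod_\K\cap\Cnt((0,1])$. By construction $a(\eps)b(\eps)=0$ for every $\eps\in(0,1]$, so $ab=0$ in $\GenKc$. To see that neither representative is negligible, evaluate at $\eps_k:=2/(4k+1)$: then $\pi/\eps_k=2\pi k+\pi/2$, so $a(\eps_k)=1$ for all $k\in\N$, and since $\eps_k\to 0$ we have $(a(\eps))_\eps\notin\Null_\K$. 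Similarly $b(2/(4k-1))=1$ for all $k$, so $b\ne 0$ in $\GenKc$.

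Now set $J:=a\GenKc$. Then $J\ne\{0\}$ since $a\in J$ is nonzero. Moreover, $a$ vanishes at $\eps=1/n$ for every $n\in\N$, hence the bound-away-from-zero condition in lemma \ref{inv-char}.1(c) (applied with $S=(0,1]$) fails for $a$; thus $a$ is not invertible in $\GenKc$, which means $1\notin a\GenKc=J$, so $J\idealproper\GenKc$. Finally, for any $y\in J$, write $y=ac$ with $c\in\GenKc$; then $by=bac=(ab)c=0$. Hence $b\in J^\perp$, and since $b\ne 0$ we conclude $J^\perp\ne\{0\}$.

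There is no real obstacle here: the construction is entirely elementary, and all four required properties ($J$ proper, $J\ne 0$, $b\in J^\perp$, $b\ne 0$) are immediate from the explicit formulas once one has chosen $a,b$ with disjoint supports that still accumulate at $0$.
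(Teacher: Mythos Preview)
Your proof is correct and follows essentially the same idea as the paper's: both exhibit a pair of nonzero zero-divisors in $\GenKc$ by taking continuous representatives supported on complementary characteristic sets accumulating at $0$. The only cosmetic difference is that you take $J$ to be the principal ideal $a\GenKc$ with the explicit choice $a(\eps)=\max(\sin(\pi/\eps),0)$, whereas the paper sets $J=\{x\in\GenKc:\restr xS=0\}$ for an abstract union of intervals $S$; either packaging works for the same reason.
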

\begin{proof}
Let $S := \bigcup_{n\in\N} (a_n,b_n)$ with $1>b_1>a_1>b_2>a_2>\dots$ and $a_n\to 0$. Then there exists $x\in \GenKc\setminus\{0\}$ such that $\restr x S=0$ and there exists $y\in\GenKc\setminus\{0\}$ such that $\restr y {\co S}=0$. Let $J=\{x\in\GenKc: \restr x S =0\}$. Then $x\in J$ and $y\in J^\perp$. 
\end{proof}

Also as in $\GenK$, the Hahn-Banach extension property does not hold in the following sense:
\begin{thm}
Let $J\idealproper\GenKc$ with $J\ne \{0\}$ and $J^\perp\ne\{0\}$. Let $I:= J+J^\perp$. Then there exists a continuous $\GenKc$-linear map $\phi$: $I\to\GenKc$ that cannot be extended to a $\GenKc$-linear map $\psi$: $\GenKc\to\GenKc$.
\end{thm}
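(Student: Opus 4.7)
The plan is to take $\phi$ to be the projection onto the first summand in the decomposition $I = J \oplus J^\perp$. First I would verify this is indeed a direct sum by showing $J \cap J^\perp = \{0\}$: any $x$ in the intersection satisfies $x \cdot x = 0$ (apply the defining condition of $J^\perp$ to $x \in J$), so $|x|^2 = 0$ and hence $x = 0$ since $\GenKc$ is reduced. The map $\phi(j+j') := j$ is then well-defined and $\GenKc$-linear by construction.

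For continuity with respect to the sharp topology, I would establish the orthogonality identity $|j+j'|^2 = |j|^2 + |j'|^2$ for $j \in J$, $j' \in J^\perp$. In the real case this is just the expansion $(j+j')^2 = j^2 + 2jj' + j'^2 = j^2 + j'^2$. In the complex case one needs the cross terms $j\bar{j'}$ and $\bar j j'$ to vanish, which follows once one knows $J$ and $J^\perp$ are closed under complex conjugation: $J$ because ideals in $\GenCc$ are absolutely order convex (recalled in the preliminaries) and $|\bar x|=|x|$, and then $J^\perp$ via $x\bar y = \overline{\bar x\, y}=0$ for $\bar x\in J$, $y\in J^\perp$. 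The identity gives $|\phi(x)| = |j| \le |j+j'| = |x|$, so $\phi$ is a sharp-contraction and in particular continuous.

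For non-extendability, suppose $\psi : \GenKc \to \GenKc$ is any $\GenKc$-linear extension. Then $\psi$ is multiplication by $c := \psi(1)$, and the extension condition translates to $cj = j$ for all $j \in J$ and $cj' = 0$ for all $j' \in J^\perp$, i.e.\ $c-1 \in J^\perp$ and $c \in J^{\perp\perp}$. Multiplying these gives $c(c-1)=0$, so $c$ is idempotent. Invoking the structural fact that $\GenKc$ has no nontrivial idempotents (the central distinguishing feature of $\GenKc$ highlighted in the introduction), we get $c \in \{0,1\}$. The case $c=0$ yields $j = cj = 0$ for every $j \in J$, contradicting $J \ne \{0\}$; the case $c=1$ yields $j' = cj' = 0$ for every $j' \in J^\perp$, contradicting $J^\perp \ne \{0\}$.

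The main obstacle is really just the idempotent step, which is precisely where the whole theorem gets its strength — everything else (well-definedness, contraction, reduction of $\GenKc$-linearity to multiplication by a constant) is bookkeeping. The only place meriting care is the conjugation invariance of $J$ and $J^\perp$ needed for the orthogonality identity in the $\GenCc$ case.
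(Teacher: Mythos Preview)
Your proposal is correct and follows essentially the same route as the paper: projection onto $J$ along $J^\perp$, continuity via $\abs{\phi(x)}\le\abs{x}$, and non-extendability by showing $\psi(1)$ is idempotent. Your derivation of $c^2=c$ from $c\in J^{\perp\perp}$ and $c-1\in J^\perp$ is a minor variant of the paper's (which instead applies $\psi$ to $\psi(1)-1\in J^\perp$), and your explicit justification of conjugation-invariance of $J$ and $J^\perp$ in the complex case is in fact more careful than the paper, which simply writes $(x+y)(\bar x+\bar y)=\abs x^2+\abs y^2$ without comment.
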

\begin{proof}
Let $\phi(x+y):= x$, for each $x\in J$ and $y\in J^\perp$. As $J\cap J^\perp=\{0\}$, $\phi$ is defined unambiguously and is $\GenKc$-linear. Further, $\abs{\phi(x+y)}^2 = \abs x^2 \le \abs x^2 + \abs y^2 = (x+y)(\bar x + \bar y) = \abs{x+y}^2$, for each $x\in J$ and $y\in J^\perp$. Hence $\phi$ is also continuous.

Now suppose that $\psi$: $\GenKc\to\GenKc$ is a $\GenKc$-linear extension of $\phi$. Then for any $x\in J$, $x\psi(1) = \psi(x) =\phi(x) =x$. Hence $x(\psi(1)-1)=0$. Thus $\psi(1)-1\in J^\perp$. Hence $\psi(1)\psi(1)-\psi(1)=\psi(\psi(1)-1)=\phi(\psi(1)-1)=0$. It follows that $\psi(1)\in\GenKc$ is idempotent, hence $\psi(1)=0$ or $\psi(1)=1$. If $\psi(1)=0$, then $\psi=0$, and thus also $\phi=0$, whence $J=\{0\}$. If $\psi(1)=1$, then $\psi(x)=x$ for each $x\in\GenKc$, and thus also $\phi(y)=y$ for each $y\in J^\perp$, whence $J^\perp=\{0\}$.
\end{proof}

\begin{cor}
If $I\idealproper\GenKc$ with $I\ne\{0\}$ and $I^\perp\ne\{0\}$, then $I+I^\perp\ne\GenKc$.
\end{cor}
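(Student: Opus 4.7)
The plan is to derive the corollary by immediate contraposition of the preceding theorem. Suppose, for contradiction, that an ideal $I\idealproper\GenKc$ satisfies $I\ne\{0\}$, $I^\perp\ne\{0\}$ and $I+I^\perp=\GenKc$. Then apply the preceding theorem with $J:=I$: the hypotheses $J\ne\{0\}$ and $J^\perp\ne\{0\}$ hold, so there exists a continuous $\GenKc$-linear map $\phi\colon J+J^\perp\to\GenKc$ that admits no $\GenKc$-linear extension to $\GenKc$. But under the assumption $I+I^\perp=\GenKc$ the domain of $\phi$ already equals $\GenKc$, so $\phi$ itself serves as a $\GenKc$-linear extension of $\phi$ to $\GenKc$, contradicting the theorem.

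As a morally equivalent sanity check (which I would mention only in passing, not write out), one can also argue directly without invoking the theorem: if $1=x+y$ with $x\in I$, $y\in I^\perp$, then $xy=0$, so $x=x(x+y)=x^2$, i.e.\ $x$ is idempotent. Since $\GenKc$ has no nontrivial idempotents, $x=0$ or $x=1$; the first case forces $1=y\in I^\perp$ and hence $I=\{0\}$, the second forces $1\in I$ and hence $I^\perp=\{0\}$.

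There is no real obstacle: the work was all done in the preceding theorem, and the corollary is the contrapositive reading of its conclusion. The only thing to be careful about is to phrase the contradiction correctly, namely that when $I+I^\perp=\GenKc$ every $\GenKc$-linear map on $I+I^\perp$ is trivially its own extension, so the theorem's non-extendability statement becomes vacuous unless $I+I^\perp\subsetneq\GenKc$.
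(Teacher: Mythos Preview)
Your argument is correct and is exactly what the paper intends: the corollary is stated without proof because it is the immediate contrapositive of the preceding theorem, and your first paragraph spells this out precisely. Your alternative direct argument via idempotents is also valid and in fact distills the essential mechanism hidden inside the theorem's proof (the extension $\psi$ there is forced to satisfy $\psi(1)^2=\psi(1)$), so it is not really a different route but the same idea unpacked.
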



\begin{thebibliography}{10}
\bibitem{AJ} J.\ Aragona, S.\ Juriaans. Some structural properties of the topological ring of Colombeau's generalized numbers. \emph{Comm.\ Algebra} 29(5): 2201--2230 (2001).

\bibitem{BKW} A.\ Bigard, K.\ Keimel, S.\ Wolfenstein. \emph{Groupes et anneaux r\'eticul\'es}. Lecture Notes in Math.\ vol.\ 608. Springer, 1977.

\bibitem{Booth}
D.\ Booth. Ultrafilters on a countable set. \emph{Ann.\ Math.\ Logic} 2: 1--24 (1970).

\bibitem{Borceux83}
F.~Borceux, G.~Van den Bossche.
{\it Algebra in a localic topos with applications to ring theory}.
Lecture Notes in Math.\ vol.~1038. Springer, 1980.

\bibitem{BK} A.\ Burtscher, M.\ Kunzinger. Algebras of generalized functions with smooth parameter dependence. \emph{Proc.\ Edinburgh Math.\ Soc.} 55(1): 105--124 (2012).

\bibitem{Choquet68}
G.~Choquet. {Deux classes remarquables d'ultrafiltres sur $\N$}.
{\em Bull.\ Sci.\ Math.} 92(2): 143--153 (1968).

\bibitem{GJ} L.\ Gillman, M.\ Jerison. \emph{Rings of continuous functions}. Springer Graduate Texts in Math.\ 43, 1960.

\bibitem{GK60}
L.~Gillman, C.W.~Kohls.
{Convex and pseudoprime ideals in rings
of continuous functions}.
{\em Math.~Z.} 72: 399--409 (1960).

\bibitem{Mason73}
G.~Mason. $z$-ideals and prime ideals. \emph{J.~Algebra} 26: 280--297 (1973).

\bibitem{Matsumura}
H.~Matsumura. {\em Commutative ring theory}.
Cambridge University Press (English translation), 1986.

\bibitem{Rudin}
W.~Rudin. Homogeneity problems in the theory of \v Cech compactifications. \emph{Duke Math.\ J.} 23: 409--419 (1956).

\bibitem{Scarpa93}
D.~Scarpal\'{e}zos. {Colombeau's generalized functions: topological structures; microlocal properties. A simplified point of view}.
{\em Universit\'{e} Paris 7} (1993).

\bibitem{HVIdeals} H.\ Vernaeve. Ideals in the ring of Colombeau generalized numbers. \emph{Comm.\ Alg.} 38(6): 2199--2228 (2010).
\end{thebibliography}
\end{document}